\documentclass[10pt]{article}
\usepackage{amsmath,amsthm,amscd,amssymb,mathrsfs,setspace}
\usepackage{latexsym,epsf,epsfig}
\usepackage[hmargin=3cm,vmargin=3.5cm]{geometry}
\usepackage{color}

\newcommand{\ds}{\displaystyle}

\newcommand{\R}{{\mathbb R}}
\newcommand{\reals}{\mathbb{R}}
\newcommand{\realstwo}{\mathbb{R}^2}
\newcommand{\realsthree}{\mathbb{R}^3}
\newcommand{\xb}{{\bf{x}}}

\newcommand{\Dn}{\partial_{\nu}}

\newcommand{\cE}{{\mathscr{E}}}
\newcommand{\cF}{{\mathcal{F}}}

\newcommand{\bH}{\mathbf{H}}
\newcommand{\Om}{\Omega}
\newcommand{\om}{\omega}
\newcommand{\ga}{\gamma}
\newcommand{\s}{\sigma}
\newcommand{\e}{\epsilon}
\newcommand{\Ez}{E_{z}}
\newcommand{\bA}{\mathbf{A}}

\theoremstyle{plain}
\newtheorem{theorem}{Theorem}[section]
\newtheorem{lemma}[theorem]{Lemma}
\newtheorem{proposition}[theorem]{Proposition}
\newtheorem{corollary}[theorem]{Corollary}
%theoremstyle{remark}
%\newtheorem{remark}{Remark}[section]
\newtheorem{remark}[theorem]{Remark}
\newtheorem{assumption}[theorem]{Assumption}
\numberwithin{equation}{section}
\numberwithin{theorem}{section}
%\numberwithin{remark}{section}
\linespread{1.15}

\title{Attractors for Delayed, Non-Rotational von Karman Plates \\ with
 Applications to Flow-Structure Interactions \\ Without any Damping}

\date{\today}

 \author{\begin{tabular}[t]{c@{\extracolsep{1em}}c@{\extracolsep{1em}}c}
      Igor Chueshov &      Irena Lasiecka & Justin T. Webster  \\ \it Kharkov National University &
 \it University of Virginia & \it Oregon State University \\ \it Kharkov, UA  &
 \it Charlottesville, VA &\it Corvallis, OR\\ \it chueshov@univer.kharkov.ua &
 \it il2v@virginia.edu & \it websterj@math.oregonstate.edu
\end{tabular}}

\begin{document}
\maketitle

\begin{abstract} {\noindent
 This paper is devoted to  a  long time behavior analysis associated with   flow structure interactions at subsonic and supersonic velocities. It turns out that  an intrinsic  component of that analysis  is  the study of attracting sets corresponding to von Karman plate equations with  {\it delayed terms} and {\it  without rotational}  terms. The presence of delay terms in the  dynamical system   leads to the loss of gradient structure while the absence of rotational terms in von Karman plates  leads to the loss of compactness of the orbits.
 Both  these features
 make the analysis of long time behavior rather subtle rendering  the  established tools in the theory of PDE dynamical systems not  applicable. 
  It is our goal  to develop methodology that is capable of handling  this class of problems.

\smallskip\par
\noindent {\bf Key terms:} nonlinear plate, PDE with delay,
 long-time behavior of solutions, dynamical systems, global attractors, flow-structure interaction,
 \smallskip\par
\noindent {\bf MSC 2010:} 35L20, 74F10,   35Q74.
 }
\end{abstract}
\section{Introduction}
The study of  von Karman plates in the presence of
aerodynamical forces  represented
by some  {\em delayed} functional  is physically motivated \cite{bolotin,dowell,dowellnon}.These
  models with delay often arise in the modeling of coupled dynamics (e.g., fluid or flow-structure interaction) where  the impact of the off-plate dynamics can
  be   written as  a boundary value of some delayed (flow) potential in the plate equation after a sufficiently large time. In fact, this is the case for flow-plate interactions arising in the modeling of panels and plates immersed in an inviscid flow (for some discussion, see \cite{b-c-1} and  \cite{springer}), and thus
  we can reduce  the study of long-time behavior of solutions to the coupled flow-plate system  to the  problem
of a von Karman plate forced by some delay term.

In the present work, the motivation and significance of studying this class of models derives from
recent developments in the area of flow-structure interactions,  with the goal of attaining good  mathematical  understanding of flow-structure dynamics at both subsonic and supersonic flow velocities.
It is known from experiment (and also confirmed by numerics), that the
potential flow (particularly at the supersonic speeds) has the ability of inducing a certain amount of stability in the moving structure. This is the case \textit{even} when the structure itself does not possess mechanical  damping mechanisms.
 If one writes down the equations for the interactive system, along with  the  standard energy balance, this dissipative effect is not exhibited  at all;
quantities are conserved and not dissipated.  Thus, there must be some ``hidden" mechanism which produces this dissipation.  It is our goal to shed some light on this phenomenon.
As it turns out, the decoupling technique introduced in \cite{LBC96,b-c-1},  which reduces the analysis
of full flow-structure interaction to that of a certain delayed plate model, allows us to observe certain stabilizing effects of the flow. These occur in the form of non-conservative forces acting upon the structure  as  the ``downwash" of the flow.
This idea was already applied  to Berger plate models \cite{oldchueshov1,oldchueshov2}
 in the proof  of  existence  of  attractors corresponding to the associated reduced plate problem with a delayed term.

In fact, well-posedness and long-time behavior analyses of nonlinear plate PDEs with delays  have been treated in
\cite{Chu92b} (see also \cite{springer}): first, in the case of the von Karman model \textit{with} rotational inertia,
and secondly, in  \cite{oldchueshov1,oldchueshov2}, in the case of the Berger model with a small intensity of delayed term (this corresponds to a large speed $U$ of the flow of gas - hypersonic).
 These expositions flesh out the existence and properties of global attractors for the general plate with delay in the presence of a `natural' form of interior damping, and then apply this general result to the specific delayed
 (aeroelastic) force given in the full flow-plate coupling.

 It should be noted that the presence of rotational inertia parameter, while drastically improving the topological properties
 of the model, is neither natural nor desirable in the context of flow-structure interaction.
 First, the original model for flow structure interaction describes the interaction between the mid-surface of the  plate and flow above the plate; and hence should the plate should be treated two-dimensionally so the equation describing the ``downwash" of the flow and the oscillations of the plate can agree on the interface. Thus, the rotational inertia term (proportional to the cube of the plate thickness, see, e.g., \cite{lagnese}) should be neglected. Secondly, the presence of rotational term changes the kinetic energy in the system, forcing a much stronger stabilization mechanism (abstractly, this corresponds to the well-known fact that the essential spectrum of an operator can not be moved with a compact perturbation). In that case, the stability induced by the flow (a viscous, velocity-proportional damping) does not suffice providing a stabilizing effect on the structure. Instead, when the rotational inertia term is neglected, the damping secured by the \textit{flow alone} provides the main mechanism for stabilization. This would seem to corroborate physical findings.
 In view of this, it is paramount to the problem at hand to consider the model {\it  which does not account for rotational inertia} and imposes {\it no limiting regimes}  on the  flow velocity parameter $U \geq 0$, $U \ne 1$.

\par
It is the purpose of this paper to show that the   requirement of  the  rotational term and restrictions on the  values of $U$
 can be eliminated. This is accomplished  by resorting to  modern analytical tools, including compensated compactness methods. These tools, developed  over the last few  years \cite{springer},  are capable  of revealing a mathematical  structure of the problem which  is  consistent with numerical and experimental findings.
Thus, not only the results, but also the techniques and approach utilized in this paper appear to be significant to the discussion of flow-plate interactions and aeroelasticity.
Accordingly,  in this treatment,  we focus on the more difficult, and hereto open, \textit{non-rotational}
  von Karman case, {\it without} any restrictions on value of unperturbed speed ($0\le U<+\infty$, $U\neq 1$).

   The mathematical difficulties which arise in this model force us to consider new long-time behavior technologies applied within this framework. In order to provide a glimpse of this, and to demonstrate the  timeliness of the project undertaken, it suffices to note that  the  first (and most fundamental)  difficulty one is  faced with is the well-posedness of finite energy solutions corresponding to flow-structure interactions - in particularly in the supersonic case.  Results and methods found in past literature depend critically on the presence of {\it rotational inertial term}, see \cite{LBC96,b-c-1} and also \cite[Chapters 6 and 12]{springer}.
 Only recently has this problem has been solved for the non-rotational model for {\it all} flow velocities \cite{supersonic}
 (the subsonic case was also discussed earlier in \cite{b-c,webster} and also in \cite{springer}). Equipped with dynamical system structure for the flow-plate interaction, one may then proceed  with a study of long time behavior.
\par

The main  points  of this treatment are: (i) to give a unified approach to the long-time behavior of this model, beginning with well-posedness discussions, and working to the existence and finite-dimensionality of a global attractor. Our results are novel, and they complete the analysis of long-time behavior of von Karman systems with delay by addressing the non-rotational case and the associated resulting lack of compactness. (ii) We make use of a recent technique \cite{chlJDE04,ch-l,springer,cl-hcte,pelin} which allows us to obtain the asymptotic compactness property for the dynamical system without making use of any gradient type structure of the dynamics (not available in this model, owing to the dispersive flow term). This has the added benefit of producing extra regularity of the attractor, and is a less demanding approach (the `traditional' approach of showing quasistability  (on the attractor)
for von Karman plates requires a full characterization of the attractor \textit{or} assumptions on finiteness of the stationary points of the dynamical system \textit{and} the use of backward-in-time methods, which are not available in systems with delay). Lastly, (iii) we make use of new breakthroughs (both in well-posedness and hidden compactness) with regard to von Karman flow-plate interactions in order to apply our general results on this system with delay to an aeroelasticity model which has received considerable attention from the PDE and dynamical systems community, as well as the engineering and aeroscience community, over the past 20 years. The  results following from the application are also novel, and provide a complete treatment of   the long-time behavior of 
a von Karman  full flow-plate interaction model.
Namely, for this model we show  the existence of a finite-dimensional compact set in the phase space of
  the plate component. This set attracts all plate trajectories  in the case when the corresponding
  gas flow initial data are localized. We emphasize that this result
 {\it does not assume any damping imposed}  on the system.
  While  stability
 of flow structure interactions {\it without any damping}  has been experienced both numerically and experimentally, our works appears to be a  first
 rigorous mathematical treatment of this phenomena. The key ingredients of the analysis include:
 exhibit of "hidden" dissipation related to the dispersive character of the flow equation along with "hidden" regularity
 of the boundary  traces of  the flow, the decoupling technique introduced in \cite{b-c-1}  and  applied within the context
 of recent and powerful techniques aimed at showing asymptotic smoothness and quasi-stability \cite{springer} without
 the a-priori known  compactness or gradient structure.

\par

We also note that
stability and flutter control in flow structure interactions has been also  treated
by several  authors at different levels (see, e.g., \cite{bal0,bolotin,dowell1,HP02} and the references therein).
For instance the recent monograph \cite{bal0} contains  a detailed derivation   of the flow equation coupled to  a linear beam equation with the boundary data which involve the Kutta-Jukovsky circulation
condition for the flow and hence can be applied to elastic wings dynamics.
 The main trust of the corresponding analysis (see also \cite{shubov1} and the literature cited therein) is the characterization of  unstable aeroelastic modes arising in Possio equation  which is linear  integral equation describing aerodynamic pressure on the structure. The  associated spectral  analysis is focused on finding unstable modes in the linear dynamics. The obtained results \cite{bal0,shubov1}  provide  specific information on the flutter speed.
In contrast, our results show that  long time behavior of a nonlinear flow-plate  model can  be reduced to a finite dimensional attracting set.   More precise information on   the stability/unstability of {\it  finite dimensional} orbits would require
in depth study of the resulting {\it nonlinear finite dimensional  dynamical system}  (which can be chaotic in the supersonic case).

\medskip\par
  The paper is organized as follows.
  In Section \ref{Sect2} we describe the general
  flow-plate interaction models and its relation to the model with delay. We also state well-posedness results here (Proposition~\ref{p:well})
  and sketch a proof. This result allows us to  define a corresponding evolution semigroup.
  In Section~\ref{Sect3} we state and discuss our main result on long-time dynamics of
the delayed model (Theorem~\ref{maintheorem}). We also show what consequences
this result yields for long-time dynamics of the general
  flow-plate interaction model (Theorem~\ref{th:main2}).
The next section, Section~\ref{Sect4}, is central and devoted to the proofs of the main results.
In Section~\ref{flowplateint} we briefly discuss the proof of the reduction theorem (Theorem~\ref{rewrite}) which rigorously ties the full flow-plate dynamics to the evolution of the von Karman plate with delay.
Finally, in the Appendix
we establish some needed properties of the delayed (aerodynamic type) force.

\smallskip \par\noindent {\bf Notation:}
For the remainder of the text we write $\xb$ for $(x,y,z) \in \realsthree_+$ or $(x,y) \in \Omega \subset \realstwo_{\{(x,y)\}}$, as dictated by context. Norms $||\cdot||$ are taken to be $L_2(D)$ for the domain dictated by context. Inner products in $L_2(\realsthree_+)$ are written $(\cdot,\cdot)$, while inner products in $L_2(\reals^2\equiv \partial \realsthree_+)$ are written $<\cdot,\cdot>$. Also, $ H^s(D)$ will denote the Sobolev space of order $s$, defined on a domain $D$, and $H^s_0(D)$ denotes the closure of $C_0^{\infty}(D)$ in the $H^s(D)$ norm
which we denote by $\|\cdot\|_{H^s(D)}$ or $\|\cdot\|_{s,D}$. We make use of the standard notation for the trace of functions defined on $\realsthree_+$, i.e. for $\phi \in H^1(\realsthree_+)$, $\gamma[\phi]=\phi \big|_{z=0}$ is the trace of $\phi$ on the plane $\{\xb:z=0\}$.

\section{Motivation and Description of the Model}\label{Sect2}

\subsection{Flow-Plate Interactions}\label{subs-flowplateint}

The model we begin  with describes the interaction between a nonlinear plate with a field or flow of gas above it. To describe the behavior of the gas we make use of the theory of potential flows (see, e.g., \cite{bolotin,dowell,kras} and the references therein) which produces a perturbed wave equation for the velocity potential of the flow. The oscillatory behavior of the plate is governed by the second order (in time) Kirchhoff plate equation with a general nonlinearity.
  We consider the von Karman nonlinearity, which is used in the modeling of the large oscillations of thin, flexible plates - so-called \textit{large deflection theory}. These
 equations are well known in nonlinear elasticity and
constitute a basic model describing nonlinear oscillations of a
plate accounting for  large displacements, see \cite{karman}
and also \cite{springer,ciarlet,lagnese}  (and references therein).

  \par
The gas flow environment we consider is $\realsthree_+=\{(x,y,z): z > 0\}$. The plate
 is  immersed in an inviscid  flow (over body, $z\le 0$) with velocity $U \neq 1$ in the  negative $x$-direction. (Here we normalize $U=1$ to be Mach 1, i.e. $0 \le U <1$ is subsonic and $U>1$ is supersonic.) This situation, for instance, corresponds to the dynamics of a panel element of an aircraft flying with the speed $U$, see, e.g.,
 \cite{dowell1,dowellnon}.
\par
The plate is modeled by  a bounded domain $\Omega \subset \reals^2_{\{(x,y)\}}=\{(x,y,z): z = 0\}$ with smooth boundary $\partial \Omega = \Gamma$ and
the scalar function $u: \Omega \times \R_+ \to \reals$ represents the vertical displacement of the plate in the $z$-direction at the point $(x,y)$ at the moment $t$.
We focus on the plate with clamped boundary
conditions\footnote{
The clamped boundary conditions are the most physically relevant boundary conditions for the flow-plate model; additionally, clamped boundary conditions allow us to avoid certain technical issues in the consideration and streamline our exposition. Other possible and relevant plate boundary conditions in this setup include: free, hinged (or simply supported), hinged dissipation, and combinations thereof \cite{springer,lagnese,websterlasiecka}.
}.

Accepting von Karman large deflection hypotheses we arrive at the following system:
\begin{equation}\label{plate0}\begin{cases}
u_{tt}+\Delta^2u+ku_t+f(u)= p(\xb,t) ~~ \text { in } ~\Omega\times (0,T), \\
u=\Dn u = 0  ~~\text{ on } ~ \partial\Omega\times (0,T),  \\
u(0)=u_0,~~u_t(0)=u_1.
\end{cases}
\end{equation}

We take the nonlinearity to be von Karman:

\begin{equation}\label{karman-f}
  f(u)=-[u, v(u)+F_0],
\end{equation}
where $F_0$ is a given forcing function and
the von Karman bracket $[u,v]$  is given by
\begin{equation*}%\label{bracket}
[u,v] = \partial ^{2}_{x} u\partial ^{2}_y v +
\partial ^{2}_y u\partial ^{2}_{x} v -
2\partial ^{2}_{xy} u\partial ^{2}_{xy}
v,
\end{equation*} and
the Airy stress function $v(u)$ is defined  by the relation $v(u)=v(u,u)$, where $v(u,w)$
 solves the following  elliptic problem
\begin{equation}\label{airy-1}
\Delta^2 v(u,w)+[u,w] =0 ~~{\rm in}~~  \Omega,\quad \Dn v(u,w)= v(u,w) =0 ~~{\rm on}~~  \partial\Omega,
\end{equation}
for given $u,w\in H^2_0(\Om)$.
\par
For the flow component of the model, we make use of linearized
 potential theory, and we know \cite{bolotin,dowell1} that the (perturbed) flow potential $\phi:\realsthree_+ \rightarrow \reals$ must satisfy the perturbed wave equation below (note that when $U=0$ this is the standard wave equation):
\begin{equation}\label{flow}\begin{cases}
(\partial_t+U\partial_x)^2\phi=\Delta \phi & \text { in } \realsthree_+ \times (0,T),\\
\phi(0)=\phi_0;~~\phi_t(0)=\phi_1,\\
\Dn \phi = d(\xb,t)& \text{ on } \realstwo_{\{(x,y)\}} \times (0,T).
\end{cases}
\end{equation}
The strong coupling here takes place in the  downwash term of the flow potential (the Neumann boundary condition) by taking $$d(\xb,t)=-\big[(\partial_t+U\partial_x)u (\xb)\big]\cdot \mathbf{1}_{\Omega}(\xb),~~~
\xb\in\R^2,
$$
and by taking in \eqref{plate0} the aerodynamical pressure
of the form
\begin{equation}\label{aero-dyn-pr}
p(\xb,t)=p_0(\xb)+\big(\partial_t+U\partial_x\big)\gamma[\phi]~~~\mbox{with}~~ \ga[\phi]\equiv \phi\big|_{z=0}.
\end{equation}
Here above $\mathbf{1}_{\Omega}(\xb)$ denotes the indicator function of $\Om$ in $\R^2$.
 This structure of $d(\xb,t)$  corresponds  to the case when the part of boundary $z=0$
 outside of the plate is a
 the surface of a  rigid body.

This gives the fully coupled model:
\begin{equation}\label{flowplate}\begin{cases}
u_{tt}+\Delta^2u+ku_t+f(u)= p_0+\big(\partial_t+U\partial_x\big)\gamma[\phi] & \text { in } \Omega\times (0,T),\\
u(0)=u_0;~~u_t(0)=u_1,\\
u=\Dn u = 0 & \text{ on } \partial\Omega\times (0,T),\\
(\partial_t+U\partial_x)^2\phi=\Delta \phi & \text { in } \realsthree_+ \times (0,T),\\
\phi(0)=\phi_0;~~\phi_t(0)=\phi_1,\\
\Dn \phi = -\big[(\partial_t+U\partial_x)u (\xb)\big]\cdot \mathbf{1}_{\Omega}(\xb) & \text{ on } \realstwo_{\{(x,y)\}} \times (0,T).
\end{cases}
\end{equation}

In this situation, a complete description of well-posedness would require an in depth discussion of strong solutions to the (\ref{flowplate}) system, including the semigroup formulation and discussion of the generator of the dynamics. In addition, these results are not uniform with respect to the parameter value $U$. We refer the reader interested in these details to \cite{b-c,jadea12,supersonic,webster}, see also \cite{springer}
and the references therein. For this treatment, the key point is the well-posedness of weak solutions to (\ref{flowplate}). These weak solutions satisfy the variational formulation as defined in \cite{springer}.
For the purpose of this work we simply cite a recently obtained  \cite{supersonic}  well-posedness result which attests that the dynamical system generated by  (\ref{flowplate}) is associated to a strongly continuous semigroup on the phase space $$ H \equiv H^2_0 (\Omega) \times L_2(\Omega) \times H^1(\realsthree_+) \times L_2(\realsthree_+)$$
The corresponding result proved in \cite{supersonic} is stated below.
\begin{theorem}\label{well-U}
Let $ 0 \leq U \ne 1 $, $ k \ge 0 $
and $F_0 \in H^{3+\delta}(\Omega)$, $p_0 \in L_2(\Omega)$.
With reference to the system defined in  (\ref{flowplate})  and any initial data  $y_0 \equiv (u_0,u_1;\phi_0,\phi_1) \in H $, there exists a unique solution
$ y(\cdot ) \in  C([0, \infty); H ) $ which is represented by a strongly continuous semigroup $ T_t : H \rightarrow H$,
 $y(t) = T_ty_0$, $t > 0$, with the estimate
 \begin{equation*}%\label{lip}
 ||T_ty_0 - T_tz_0   ||_H \leq C(R) e^{\omega_R t } ||y(0) - z(0)||_H , ~~~ \forall\, ||y_0||_H \leq R,
  ||z_0||_H \leq R,
 \end{equation*}
 where $C(R)$ and $\om_R$ are positive constants.
  \end{theorem}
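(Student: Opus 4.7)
The plan is to reduce Theorem~\ref{well-U} to two ingredients: generation of a strongly continuous semigroup by the linear part of (\ref{flowplate}) on the phase space $H$, and local Lipschitz continuity of the nonlinear map $u\mapsto f(u)$. Granting these, a standard contraction argument in $C([0,T];H)$ for small $T$ yields local mild solutions, a priori energy bounds extend them globally, and the locally Lipschitz dependence on data with exponential-in-time constant follows from the fixed-point construction combined with Gronwall's inequality.

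The linear theory splits into two regimes. For subsonic $0\le U<1$, the flow energy $\|\phi_t\|^2+\|\nabla\phi\|^2-U^2\|\Dx\phi\|^2$ is equivalent to the $H^1\times L_2$ norm, so the full coupled energy is equivalent to the $H$-norm. Writing the linear problem as $y_t=\cA y$ with the boundary coupling absorbed into the generator, a Lumer--Phillips argument in a suitable equivalent inner product produces a $C_0$-semigroup on $H$; the pressure trace $\ga[\phi]$ entering the plate equation and the downwash $\Dn\phi$ prescribed from the plate velocity are both controlled by classical trace theory.

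For supersonic $U>1$ the energy loses definiteness and, more seriously, the coupling feeds $\ga[\phi_t]$ back into the plate equation through (\ref{aero-dyn-pr}), yet this trace is not defined on the finite-energy class of $\phi$ by ordinary trace theory. The resolution is a hidden regularity estimate for the shifted wave equation $(\Dt+U\Dx)^2\phi=\Delta\phi$, asserting that the boundary traces of $\phi$ and $\phi_t$ lie in $L_2([0,T]\times\Om)$ with constants of exponential-in-$T$ growth. Establishing this—via microlocal analysis of the Neumann problem for the convective wave operator and propagation of interior regularity up to the noncharacteristic boundary $\{z=0\}$ against the drift $U\Dx$—is the central and most delicate step. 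Once it is in place, one regularizes the flow equation by adding artificial viscosity $\e\Delta\phi_t$, solves the regularized coupled linear system by semigroup methods, derives $\e$-uniform local energy bounds by absorbing the boundary terms via hidden regularity, and passes to the limit $\e\downarrow 0$.

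To incorporate the von Karman nonlinearity $f(u)=-[u,v(u)+F_0]$, elliptic regularity applied to (\ref{airy-1}) yields $v(u)\in H^{2+\delta}(\Om)$ whenever $u\in H^2_0(\Om)$, which renders $f:H^2_0(\Om)\to L_2(\Om)$ locally Lipschitz. The full system then becomes a locally Lipschitz perturbation of the linear semigroup on $H$, and Picard iteration furnishes a unique local mild solution. The principal obstacle throughout is the supersonic hidden-regularity trace estimate; once that is available, global existence and the stated bound $C(R)e^{\om_R t}$ follow by combining the exponential-in-time character of the linear flow with the quasi-local nature of the von Karman bracket to rule out finite-time blow-up.
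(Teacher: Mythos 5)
The paper does not actually prove Theorem~\ref{well-U}: it is quoted verbatim from the reference \cite{supersonic}, and the only internal material bearing on it is the hidden trace regularity \eqref{trace} recorded in Section~\ref{flowplateint} and the sharp Airy regularity of Lemma~\ref{l:airy-1}. Your overall architecture --- linear generation on $H$, a supersonic trace-regularity input, and a locally Lipschitz perturbation argument for the von Karman term --- is the right skeleton and matches the cited strategy. However, two of your key steps would fail as stated. First, you claim that the boundary traces of $\phi$ and $\phi_t$ lie in $L_2([0,T]\times\Omega)$. That is not available for finite-energy solutions of the Neumann problem for the (convective) wave equation; the hidden regularity that actually holds, and that the paper records in \eqref{trace}, is only for the combined aeroelastic potential, $(\partial_t+U\partial_x)\gamma[\phi]\in L_2(0,T;H^{-1/2}(\Omega))$. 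The well-posedness argument must be organized so that only this combination, in this weak topology, ever enters the plate equation; an argument that separately needs $\gamma[\phi_t]\in L_2$ has no foundation.

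Second, your derivation of local Lipschitz continuity of $f$ is not correct. Elliptic regularity for \eqref{airy-1} with $u\in H^2_0(\Omega)$ does not give $v(u)\in H^{2+\delta}(\Omega)$ in any useful sense (the bracket $[u,u]$ is only an $L_1$-type quantity), and even if it did, $H^{2+\delta}(\Omega)\not\hookrightarrow W^{2,\infty}(\Omega)$ in two dimensions for small $\delta$, so the product $\partial^2u\cdot\partial^2 v(u)$ would not land in $L_2(\Omega)$. What is needed is precisely the sharp regularity $v(u,w)\in W^{2,\infty}(\Omega)$ of Lemma~\ref{l:airy-1}, a compensated-compactness result that the paper emphasizes as critical and relatively recent; without it the Picard iteration does not close. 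Finally, your global-existence step is asserted rather than argued: the continuous-dependence bound $C(R)e^{\omega_R t}$ permits exponential growth, and for $U>1$ the flow energy is indefinite, so ruling out finite-time blow-up requires the a priori energy identity together with the lower bound on the von Karman potential energy (cf.\ Proposition~\ref{potentiallowerbound}), not the ``quasi-local nature of the bracket.'' These are genuine gaps, though each can be repaired by importing the corresponding results from \cite{supersonic} and Lemma~\ref{l:airy-1}.
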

  \begin{remark}{\rm
  When $ 0 \leq U < 1 $ the semigroup $T_t$ in Theorem \ref{well-U} is stable in some extended space
  $\widetilde{H}$, i.e. there is a space $\widetilde{H}\supset H$ such that
  $||T_t y||_{\widetilde{H}} \leq C (R)$, $t > 0$.
  The above estimate (valid also for $k =0 $) owes its validity to the  nonlinear effects \cite{webster,jadea12,springer}. It is not valid for the corresponding linear semigroup.
 }
  \end{remark}

Various past considerations (se, e.g., \cite{springer}) of  systems like (\ref{flowplate}) have made use of an explicit solver for the flow. In such an approach, we may rewrite the system above as a von Karman system with delay of the form in the earlier sections. Reducing the flow-plate problem to a delayed  von Karman plate is the primary motivation for this treatment and allows long-time behavior analysis of the flow-plate system, which is considerably more difficult otherwise.
The exact  statement of this reduction is given in the following assertion:

\begin{theorem}\label{rewrite}
Let the hypotheses of Theorem~\ref{well-U} be in force, and $(u_0,u_1;\phi_0,\phi_1) \in H^2_0 (\Omega) \times L_2(\Omega) \times H^1(\realsthree_+) \times L_2(\realsthree_+)$. Assume that there exists an $R$ such that $\phi_0(\xb) = \phi_1(\xb)=0$ for $|\xb|>R$.  Then the there exists a time $t^{\#}(R,U,\Omega) > 0$ such that for all $t>t^{\#}$ the weak solution $u(t)$ to (\ref{flowplate}) satisfies the following equation:
\begin{equation}\label{reducedplate}
u_{tt}+\Delta^2u+ku_t-[u,v(u)+F_0]=p_0-(\partial_t+U\partial_x)u-q^u(t)
\end{equation}
with
\begin{equation}\label{potential}
q^u(t)=\dfrac{1}{2\pi}\int_0^{t^*}ds\int_0^{2\pi}d\theta [M^2_{\theta}\widehat u](x-(U+\sin \theta)s,y-s\cos \theta, t-s).
\end{equation}
Here, $\widehat u$ is the extension\footnote{
 This extension of the solution $u(t)$ is possible owing to the clamped boundary conditions.
}
 of $u$
   by 0 outside of $\Omega$; $M_{\theta} = \sin\theta\partial_x+\cos \theta \partial_y$ and \begin{equation}\label{delay} t^*=\inf \{ t~:~\xb(U,\theta, s) \notin \Omega \text{ for all } \xb \in \Omega, ~\theta \in [0,2\pi], \text{ and } s>t\}
\end{equation} with $\xb(U,\theta,s) = (x-(U+\sin \theta)s,y-s\cos\theta) \subset \realstwo$.
\end{theorem}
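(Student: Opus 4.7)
The strategy is to solve the flow subsystem in $\realsthree_+$ explicitly in terms of its Neumann boundary data $d(\xb,t)=-[(\partial_t+U\partial_x)\widehat u]$ and initial data $(\phi_0,\phi_1)$, read off $(\partial_t+U\partial_x)\gamma[\phi]$ on $\Omega$, and substitute into the plate equation to obtain \eqref{reducedplate}. First, extend $\phi$ evenly in $z$ so that the homogeneous Neumann condition is automatic and $d$ appears as a distributional source supported on $\{z=0\}$. The extended potential satisfies the perturbed wave equation $(\partial_t+U\partial_x)^2\phi=\Delta\phi$ on all of $\R^3$, which becomes a standard $3$D wave equation after the Galilean change of frame $\tilde\phi(x,y,z,t)=\phi(x+Ut,y,z,t)$, producing a source and initial data translated accordingly.

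Next, apply Kirchhoff's representation formula in $\R^3$. The solution splits as $\phi=\phi^{(0)}+\phi^{(d)}$, where $\phi^{(0)}$ carries the initial data and $\phi^{(d)}$ is the retarded potential from $d$. Because $\operatorname{supp}(\phi_0,\phi_1)\subset\{|\xb|\le R\}$ and the laboratory-frame propagation speed is bounded by $1+U$, the backward characteristic cone through a point $(\xb,0,t)$ with $\xb\in\Omega$ no longer intersects the support of the initial data once $t$ exceeds some $t^{\#}(R,U,\Omega)$; hence $\phi^{(0)}$ does not contribute to $\gamma[\phi]$ on $\Omega$ for $t>t^{\#}$. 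For the retarded term, parametrize the (shifted) light cone through $(x,y,0,t)$ by $(\theta,s)\in[0,2\pi]\times[0,t]$, so that a boundary source point reads $(x-(U+\sin\theta)s,\,y-s\cos\theta,\,t-s)$, and the surface integral collapses to
\[
\gamma[\phi^{(d)}](x,y,t)=\frac{1}{2\pi}\int_0^{t}\!ds\int_0^{2\pi}\!d\theta\;\widehat{d}\bigl(x-(U+\sin\theta)s,\,y-s\cos\theta,\,t-s\bigr).
\]

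Now apply $(\partial_t+U\partial_x)$ to this identity. The key observation (essentially the one used in \cite{LBC96,b-c-1}) is that on the characteristic surface the operator $(\partial_t+U\partial_x)$ acting on the outer variables can be re-expressed as $-\partial_s$ plus tangential pieces $\sin\theta\,\partial_x+\cos\theta\,\partial_y=M_\theta$ acting on the inner variables. Substituting $\widehat d=-(\partial_t+U\partial_x)\widehat u$ thus produces a second appearance of the same operator; integrating by parts in $s$ (and, once, in $\theta$) converts both copies into tangential derivatives of $\widehat u$, so the net effect is the operator $M_\theta^{2}$ acting on $\widehat u$ inside the integral. The $s$-integration limit may be reduced from $t$ to $t^*$ defined in \eqref{delay}, because the argument $(x-(U+\sin\theta)s,y-s\cos\theta)$ leaves $\Omega$ for $s>t^*$ and $\widehat u$ vanishes off $\Omega$. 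The boundary term at $s=0$ yields precisely $-(\partial_t+U\partial_x)u(x,y,t)$, while the boundary term at the upper limit vanishes for $t>t^{\#}$ since the domain of influence of the initial time slice no longer meets $\Omega$. Inserting the resulting expression for $(\partial_t+U\partial_x)\gamma[\phi]$ into the aerodynamic pressure \eqref{aero-dyn-pr} and then into \eqref{plate0} delivers \eqref{reducedplate}--\eqref{potential}.

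\textbf{Main obstacle.} The delicate point is the chain of integrations by parts that turns the double $(\partial_t+U\partial_x)$ into the purely tangential operator $M_\theta^{2}$ acting on $\widehat u$ at the retarded argument, while keeping scrupulous track of boundary contributions at $s=0$ and $s=t$ and of the terms that arise when $(\partial_t+U\partial_x)$ crosses the $s$-dependence through the outer variables. Verifying that every ``leftover'' term either vanishes for $t>t^{\#}$ by finite propagation speed or cancels by the clamped extension of $u$ (so that $\widehat u$ has no jump across $\partial\Omega$) is what makes the explicit delayed kernel \eqref{potential} come out clean; the hypothesis $U\neq 1$ enters precisely to keep the characteristic surface non-degenerate and the change of frame well-defined throughout this computation.
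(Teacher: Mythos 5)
Your proposal follows essentially the same route as the paper's proof in Section~\ref{flowplateint}: split the flow potential into the part carrying the initial data, which vanishes on $\Omega$ after a finite time by the strong Huygens principle in $\realsthree_+$ thanks to the localization of $(\phi_0,\phi_1)$, and the retarded potential generated by the downwash $d$; write the trace of the latter via the Kirchhoff formula in the Galilean frame; and convert the two occurrences of $(\partial_t+U\partial_x)$ into $M_\theta^2$ by integration by parts in $s$ and $\theta$, the $s=0$ boundary term producing $-(\partial_t+U\partial_x)u$ and the compact support of $\widehat u$ cutting the $s$-integral at $t^*$. This is exactly the decomposition $\phi=\phi^*+\phi^{**}$ and the representation of $(\partial_t+U\partial_x)\gamma[\phi^{**}]$ that the paper imports from \cite{springer}.

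The one substantive point you pass over is the justification of these formal manipulations at the regularity level actually covered by the theorem. For the non-rotational plate one only has $u_t\in C(0,T;L_2(\Omega))$, so the Neumann datum $d=-(\partial_t+U\partial_x)\widehat u$ is merely $L_2$-valued; the Kirchhoff trace formula and the subsequent integrations by parts are not a priori meaningful for such data (the general half-space theory gives the flow only in $H^{2/3}$ of the space-time cylinder, as noted in the footnote to \eqref{trace}). The paper's argument leans on two facts that your write-up should invoke explicitly: (i) the coupled semigroup of Theorem~\ref{well-U} keeps $(\phi^{**},\phi^{**}_t)$ in $H^1(\realsthree_+)\times L_2(\realsthree_+)$ --- a cancellation effect of the interaction, not a property of the Neumann problem with arbitrary $L_2$ data --- together with the hidden trace regularity $(\partial_t+U\partial_x)\gamma[\phi]\in L_2(0,T;H^{-1/2}(\Omega))$ of \eqref{trace}; and (ii) the local Lipschitz property of the von Karman bracket (Lemma~\ref{l:airy-1}) and the bound \eqref{qnegest4} on $q^u_t$, which allow the identity, first established for smooth data, to be passed to the limit along weak solutions. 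Without (i)--(ii) the computation remains formal, and this is precisely where the non-rotational case departs from the rotational argument of \cite{springer} that your scheme otherwise reproduces.
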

Thus, after some time, the behavior of the flow can be captured by the aerodynamical pressure term $p(t)$ in the form of a reduced delayed forcing.  This representation has been used in previous considerations of long-time behavior of plates and shallow shells \cite[Section 6.6, pp. 312-334]{springer} (and the references therein). A rigorous proof of this representation can be found in \cite[pp. 333-334]{springer} for the rotational case (when we have
additional regularity of the plate velocity $u_t$). For the reader's convenience, in Section~\ref{flowplateint} we provide a sketch of the proof, which extends the arguments given in \cite{springer} for the {\it  rotational case}. This extension is direct,
once the following  ingredients are accounted for:
(1) The full system in (\ref{flowplate}) generates strongly continuous semigroup (see Theorem \ref{well-U}), (2) the von Karman bracket is locally Lipschitz on $H$ (see Lemma \ref{l:airy-1}), and (3) the time derivative of the delayed term $ q^u_t$   is bounded on $H$  (inequality (\ref{qnegest4}) in Proposition \ref{pr:q}) .
\par
Theorem~\ref{rewrite} allows us to suppress the dependence of the problem on the flow variable $\phi$.
Here we emphasize that the structure of aerodynamical pressure \eqref{aero-dyn-pr} posited in the hypotheses leads to the velocity term  $-u_t$ on the RHS of \eqref{reducedplate}.
 One can be absorb this term into the damping coefficient $k$ on the LHS. However, since we have made no assumptions on the value of $k$, \textit{we may strengthen our result for the full reduced flow-plate system by henceforth assuming $k=0$} and utilizing the natural damping appearing in the structure of the reduced flow pressure, i.e., by moving this term to the RHS.
\par
As we see below, the reduction method above allows us to study long-time behavior of the dynamical system corresponding to (\ref{flowplate}) (for sufficiently large times) by reducing the problem to a plate equation with delay. The flow state variables $(\phi,\phi_t)$  manifest themselves in our rewritten system via the delayed  character of the problem; they appear  in the initial data for the delayed  component of the plate, namely $u^t\big|_{(-t^*,0)}$.  Hence the behavior of both dynamical systems agree for all $t>t(R,U,\Omega)$. By the dynamical systems property for the full system (see Theorem \ref{well-U}), we can propagate forward and simply study the long-time behavior of the plate with delay on the interval $(\sigma-t^*,\sigma+T]$ for $\sigma>t_{\#}$ and $T \le \infty$.
\smallskip\par
The following proposition
motivates the  hypotheses  imposed below on the delayed   force term in the von Karman plate  model (\ref{reducedplate}).

\begin{proposition}\label{pr:q}
Let $q^u(t)$ be given by (\ref{potential}). Then \begin{equation}\label{qnegest}
||q^u(t)||^2_{-1} \le ct^*\int_{t-t^*}^t||u(\tau)||^2_1d\tau
\end{equation} for any $u \in L_2(t-t^*,t;H_0^1(\Omega))$.
If $u \in L_2^{loc}([-t^*,+\infty[;H^2\cap H_0^1)(\Omega))$ we also have
\begin{equation}\label{qnegest2}
||q^u(t)||^2 \le ct^*\int_{t-t^*}^t||u(\tau)||^2_2d\tau,~~~\forall t\ge0,
\end{equation}
and
\begin{equation}\label{qnegest3}
\int_0^t ||q^u(\tau)||^2 d\tau \le c[t^*]^2\int_{-t^*}^t||u(\tau)||^2_2d\tau
,~~~\forall t\ge0.
\end{equation}
Moreover if $u \in C(-t^*,+\infty;H^2\cap H_0^1)(\Omega))$, we have that  $q^u(t) \in C^1( \R_+; H^{-1}(\Omega))$,
\begin{equation}\label{qnegest4}
\|q^u_t(t)\|_{-1} \le C\Big\{ ||u(t)||_1+||u(t-t^*)||_1+\int_{-t^*}^0||u(t+\tau)||_2d\tau\Big\},
~~~\forall t\ge0.
\end{equation}
\end{proposition}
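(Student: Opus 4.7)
The proof rests on two elementary facts: $M_\theta=\sin\theta\,\partial_x+\cos\theta\,\partial_y$ is a first-order constant-coefficient operator on $\R^2$ with $M_\theta^{*}=-M_\theta$; and the clamped boundary conditions make the zero-extension $\widehat u$ an isometry into each $H^k(\R^2)$ for $k=0,1,2$, while the spatial translations appearing in \eqref{potential} are isometries on those spaces too. Both of these will be used repeatedly in the change-of-variables and integration-by-parts manipulations that follow.

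For \eqref{qnegest} and \eqref{qnegest2} I realize the $H^{-1}(\Omega)$ (resp.\ $L_2(\Omega)$) norm of $q^u(t)$ by duality, undo the spatial shift in the argument of $M_\theta^2\widehat u$ by a change of variable, and estimate each $(\theta,s)$-slice pointwise. For \eqref{qnegest2} I simply use $\|M_\theta^2\widehat u(\cdot,t-s)\|_{L_2(\R^2)}\le C\|u(t-s)\|_2$; for \eqref{qnegest} I first integrate by parts once to transfer a single $M_\theta$ onto the test function $\psi\in H^1_0(\Omega)$, which leaves $\|M_\theta\widehat u\|_{L_2}\,\|\psi\|_1\le C\|u(t-s)\|_1\,\|\psi\|_1$. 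Cauchy--Schwarz in $s\in[0,t^*]$ then supplies the factor $t^*$ and the integral over $[t-t^*,t]$. Inequality \eqref{qnegest3} follows from \eqref{qnegest2} by integrating in $t$ and swapping the order of integration: for each $\sigma\in[-t^*,T]$ the set of $\tau$ with $\sigma\in[\tau-t^*,\tau]$ has measure at most $t^*$, whence the additional factor $t^*$ appears.

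The main obstacle is \eqref{qnegest4}, because the hypothesis $u\in C([-t^*,\infty);H^2\cap H^1_0(\Omega))$ does \emph{not} grant $u_t$. To avoid differentiating $\widehat u$ in its time slot, I would reparametrize by $\tau=t-s$, rewriting
\begin{equation*}
q^u(t)=\frac{1}{2\pi}\int_0^{2\pi}d\theta\int_{t-t^*}^{t}
[M_\theta^2\widehat u]\bigl(x-(U+\sin\theta)(t-\tau),\,y-(t-\tau)\cos\theta,\,\tau\bigr)\,d\tau,
\end{equation*}
so that the $t$-dependence is confined to the integration limits and the spatial shift. Differentiating in $t$ in the distributional sense yields two boundary contributions at $\tau=t$ and $\tau=t-t^*$, together with an interior term in which $\partial_t$ acts purely as the characteristic directional derivative $-(U\partial_x+M_\theta)$ applied to $M_\theta^2\widehat u$; crucially, no $u_\tau$ appears. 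This substitution is the heart of the argument.

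It then remains to pair each piece with $\psi\in H^1_0(\Omega)$. The $\tau=t$ boundary term is $\int_{\R^2}M_\theta^2\widehat u(\cdot,t)\,\psi$, which after one integration by parts is bounded by $C\|u(t)\|_1\|\psi\|_1$; the $\tau=t-t^*$ term is treated identically after a spatial change of variable, giving $C\|u(t-t^*)\|_1\|\psi\|_1$. For the interior term I integrate by parts once to move the first-order operator $U\partial_x+M_\theta$ onto the (shifted) test function, leaving $M_\theta^2\widehat u(\cdot,\tau)\in L_2(\R^2)$; Cauchy--Schwarz then produces $C\int_{t-t^*}^{t}\|u(\tau)\|_2\,d\tau\,\|\psi\|_1$, which after the shift $\sigma=\tau-t$ is exactly the integral term in \eqref{qnegest4}. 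The $C^1$-in-$t$ regularity of $q^u$ as an $H^{-1}(\Omega)$-valued map follows from the same representation combined with the strong continuity of translations on $L_2(\R^2)$ and dominated convergence.
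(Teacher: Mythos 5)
Your proof is correct and follows essentially the same route as the paper: for \eqref{qnegest4} the paper likewise trades $\partial_t$ for a derivative along the characteristics (equivalent to your substitution $\tau=t-s$), producing the two boundary terms at $s=0$ and $s=t^*$ plus the interior transport term $(U\partial_x+M_\theta)M_\theta^2\widehat u$, and then integrates by parts once in each piece against $\psi\in H_0^1(\Omega)$ after extending to $\R^2$ and undoing the shift. For \eqref{qnegest}--\eqref{qnegest3} the paper simply cites \cite{Chu92b} and \cite{springer}; your duality, change-of-variables, and Cauchy--Schwarz arguments are the standard ones and are sound.
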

For the proof, we refer to  Section~\ref{appendix} below.
\begin{remark}
{\rm
A priori, when $u_t$ is in $H^1_0(\Omega)$, it is clear from \eqref{qnegest} that there is a compactness margin  and
 we have the estimate
 $$
 \int_0^t<q^u(\tau),u_t(\tau)>d\tau\le
 \epsilon \int_0^t ||u_t(\tau)||^2_1+C(\epsilon,t)\sup_{\tau \in [-t^*,t]}||u(\tau)||^2_{1}.
 $$
 However, this is not immediately apparent when $u_t \in L_2(\Omega)$ as $||q^u(t)||^2_0$ has no such a priori bound from above, as in \eqref{qnegest}.
Hence, the critical component which allows us a transition from the $\gamma>0$ case (with damping of the form $k(1-\gamma \Delta)u_t$) to the $\gamma =0$ case is the hidden compactness of the aforementioned term displayed by \eqref{qnegest4}.
  We note that inequality (\ref{qnegest4}) represents a loss of one derivative  (anisotropic  - time derivatives are scaled by two spatial derivatives), versus the loss of two derivatives in  (\ref{qnegest}), (\ref{qnegest2}), and (\ref{qnegest3}).
}
\end{remark}

\subsection{PDE Description of the  Plate Model with the   Delay }
Below we utilize a positive parameter $0<t^*<+\infty$ as the time of delay, and
accept  the commonly  used (see, e.g.,   \cite{Delay-book1995} or \cite{wu-1996}) notation $u^t(\cdot)$
 for function on $s\in [-t^*,0]$ of the form $s\mapsto u(t+s)$.
We need this because of the delayed character of the problem
which requires initial data of the prehistory interval  $[-t^*,0]$, i.e.,
need to impose an initial condition
 of the form $u|_{t \in (-t^*,0)} = \eta(\xb, t)$,
 where $\eta$ is a given function on $\Om\times [-t^*,0]$.
 We can choose this prehistory data $\eta$ in different
 classes. In our problem it is convenient to deal
 with Hilbert type structures, and therefore we assume in the further considerations that
 $ \eta \in L_2(-t^*,0;H^2_0(\Omega))$. Since we do not assume
 the continuity of $\eta$ in $s\in [-t^*,0]$,  we also
 need to add the (standard) initial conditions of the form
 $u(t=0)= u_0(\xb)$ and $\partial_t u(t=0)=u_1(x)$.
 \par
 Again, employing von Karman large deflection hypotheses we arrive at the following system:
\begin{equation}\label{plate}\begin{cases}
u_{tt}+\Delta^2u+ku_t+f(u) +Lu= p_0+q(u^t,t) ~~ \text { in } ~\Omega\times (0,T), \\
u=\Dn u = 0  ~~\text{ on } ~ \partial\Omega\times (0,T),  \\
u(0)=u_0,~~u_t(0)=u_1,~~\\ u|_{t \in (-t^*,0)} = \eta\in L_2(-t^*,0;H^2_0(\Omega)).
\end{cases}
\end{equation}
Here $f(u)$  is given by \eqref{karman-f}.
The forcing term $q(u^t,t)$ occurring on the RHS of the plate equation will encompass the delayed potential
of the gas flow
and given by the function $q :\, L_2(-t^*,0;H^2_0(\Omega))\times \R\mapsto \R$, which will be specified below.
 The scalar $k\ge 0$ is our damping coefficient, and represents constant viscous damping across the full interior of the plate. The operator $L$ encompasses spatial lower order terms which do not have gradient structure
 (e.g., the term $-Uu_x$  in \eqref{reducedplate}).

\begin{remark}\label{rotational}
{\rm As it was already mentioned above,
the  basic plate model we consider  may include a rotational inertia term (see, e.g.
\cite{lagnese} or \cite{springer}), corresponding to the parameter $\gamma \ge 0$ and accompanying damping parameters $k_1,k_2 >0$. This leads to a plate equation of the form
$$
(1-\gamma\Delta)u_{tt}+\Delta^2u+(k_1-k_2\Delta)u_t+f(u)+Lu=p_0+q(u^t,t).
$$
Recall that the parameter $\gamma$ corresponds to rotational inertia in the filaments of the plate, as discussed in the Introduction.
These kind of delay models were studied in  and \cite{Chu92b} and \cite[Sections 3.3.1 and 9.3.1]{springer}.
We also note that in the case when
\[
f(u)=f_0\left(\int_\Om|\nabla u(\xb)|^2d\xb\right)
\]
in \eqref{plate}, with an appropriate $C^1$ function $f_0$,
we arrive to the Berger plate model with delay which was studied in
\cite{oldchueshov1,oldchueshov2}.
}
\end{remark}
Now we formulate our standing hypotheses; we begin with those responsible for well-posedness of the model in \eqref{plate}:

\begin{assumption}\label{as:1}
\begin{itemize}
\item  We suppose  $f(u)=-[u, v(u)+F_0]$,  where the functions $F_0$ and $p_0$ possess the properties:
$$
F_0(\xb) \in H^{3+\delta}(\Omega)~~\mbox{for some $\delta>0$},~ ~p_0(\xb) \in L_2(\Omega).
$$
\item The linear operator $L: H_0^2(\Omega) \to L_2(\Omega)$ is continuous.
\item $v\mapsto q (v, t)$ is a continuous linear mapping from $L_2(-t^*,0;H^2(\Omega))\times\R_+$ to $L_2(\Omega)$ possessing the property:
\begin{equation}\label{assumpt1-0}
||q(u^t, t)||^2 \le~ C\int_{t-t^*}^t||u(\tau)||_2^2 d\tau, ~~~\forall\, t\ge 0,
~~\forall\, u\in L_2^{loc}([-t^*,+\infty[;H^2(\Omega)).
\end{equation}

\end{itemize}
\end{assumption}

Additional hypotheses are needed for long-time dynamics:

 \begin{assumption}\label{as:2}
 \begin{itemize}
\item The linear operator $L: H_0^{2-\delta}(\Omega) \to L_2(\Omega)$ is continuous for some $\delta>0$.
\item $q(v,t)$  possesses the (additional) property:
\begin{equation}\label{assumpt1}
||q(u^t,t)||_{-\sigma}^2 \le~ C\int_{t-t^*}^t||u(\tau)||_{2-\sigma}^2 d\tau~~~\mbox{for some $0<\sigma<2$,}
\end{equation}
with any $t>0$ and   $u\in L_2^{loc}([-t^*,+\infty[;H^2(\Omega))$.

\item
We assume that the generalized time derivative $\partial_t [q(u^t,t)]$ belongs to $H^{-2}(\Omega)$ a.s.
for any $u \in C(-t^*,T;H_0^2(\Omega))$  with the following estimate holding for any $\psi \in H_0^2(\Omega)$:
\begin{equation}\label{assumpt1.75}
|<\partial_t [q(u^t,t)],\psi>| \le C\left(||u(t)||_{2}+||u(t-t^*)||_{2}+\int_{-t^*}^0||u(t+\tau)||_2 d\tau\right) ||\psi||_{2-\eta}
\end{equation} for some $\eta>0$.
 \end{itemize}
 \end{assumption}
 \begin{remark}\label{re:as12}
 {\rm

By Proposition~\ref{pr:q}  $q(u^t,t)\equiv q^u(t)$ given by \eqref{potential}
satisfies both  Assumptions \ref{as:1} and \ref{as:2}.
Roughly speaking,
the conditions in \eqref{assumpt1-0}--\eqref{assumpt1.75}  mean that
the delay time of the system is distributed in the interval $[-t^*,0]$, with
density which is absolutely continuous with respect to Lebesgue measure.
This observation also implies that
a delay term of the form
\[
q(v,t)=\int_{-t^*}^0 Q(t,\tau)v(\tau)d\tau,
\]
where $Q(t,\tau)$ is a family of linear operators from $H_0^2(\Om)$ into $L_2(\Om)$
satisfying appropriate hypotheses, could be studies from the point of view of this treatment.
We also note
that we will use the  estimate in  \eqref{assumpt1.75} to derive a result on `hidden' compactness of the term $$\int_0^t<q(u^\tau,\tau),u_t(\tau)>d\tau,$$ which is arrived at via integration by parts in time.
 }
  \end{remark}

\subsection{Well-Posedness of the Plate Model and Energy Relation}
Long-time behavior analysis of the delayed system depends on  the well-posedness of suitably
defined weak solutions which generate a dynamical system on the  phase space $\bH = H_0^2(\Omega) \times L_2(\Omega) \times L_2(-t^*,0;H_0^2(\Omega))$.

 Well-posedness of weak solutions  has been addressed \cite{Chu92b} and
 \cite[Section 3.3.1, pp. 189-192; 221-222]{springer}
via the Galerkin method, see also \cite{oldchueshov1,oldchueshov2}
in the case of Berger plates. In what follows we  summarize and complement relevant results.
\par

 We take a \textit{weak solution} to \eqref{plate} on $[0,T]$ to be a function $$u \in L_{\infty}(0,T;H_0^2(\Omega))\cap W^1_{\infty}(0,T;L_2(\Omega)) \cap L_2(-t^*,0;H_0^2(\Omega))$$ such that the variational relation corresponding to \eqref{plate} holds (see, e.g., \cite[(4.1.39), p.211]{springer}).
We now assert:
\begin{proposition}\label{p:well}
Let Assumptions~\ref{as:1} be in force. Then, with initial data
$$
(u_0,u_1,\eta) \in\bH\equiv H_0^2(\Omega)\times L_2(\Omega) \times L_2(-t^*,0;H_0^2(\Omega)),
$$
problem (\ref{plate}) has a unique weak solution on $[0,T]$ for any $T>0$. This solution belongs to
the class
$$C(0,T;H_0^2(\Omega))\cap C^1(0,T;L_2(\Omega))
$$
and satisfies the energy identity
\begin{equation}\label{energyrelation}
\cE(t)+k\int_s^t ||u_t(\tau)||^2d\tau=\cE(s)+
\int_s^t<q(u^{\tau},\tau),u_t(\tau)>d\tau+\int_s^t<p_0-Lu(\tau),u_t(\tau)>d\tau,
\end{equation}
where the full (not necessarily positive) energy has the form
\begin{equation}\label{fulle}\cE(u,u_t) \equiv \dfrac{1}{2}\big\{||u_t||^2-<[u,F_0],u>\big\}+\Pi_*(u)
\end{equation}
with \begin{equation}\label{potentiale}
\Pi_*(u) \equiv \dfrac{1}{2}\big\{ ||\Delta u||^2+\dfrac{1}{2}||\Delta v(u)||^2\big\}.
\end{equation}
\end{proposition}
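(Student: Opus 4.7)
The plan is to prove existence, uniqueness, continuity, and the energy identity via a Faedo--Galerkin scheme adapted to the prehistory data, followed by a density argument for the identity. Let $\{e_k\}$ be the orthonormal basis of $L_2(\Om)$ consisting of eigenfunctions of $\Delta^2$ with clamped boundary conditions, and let $P_n$ denote the $L_2$-projection onto $V_n=\operatorname{span}\{e_1,\dots,e_n\}$. First I would seek $u_n(t)=\sum_{k=1}^n c_k^n(t)e_k$ satisfying the projection of \eqref{plate} onto $V_n$, with initial conditions $u_n(0)=P_nu_0$, $\partial_t u_n(0)=P_nu_1$, and prehistory $u_n^0=P_n\eta$ (componentwise in $s\in[-t^*,0]$). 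Since by Assumption~\ref{as:1} the maps $v\mapsto q(v,t)$ and $L$ are continuous and the von Karman bracket is locally Lipschitz (Lemma~\ref{l:airy-1}), this reduces to a delay integro-differential system on $\R^n$ with locally Lipschitz right-hand side, admitting a unique local solution.

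The a~priori bound comes from multiplying the Galerkin equation by $\partial_t u_n$, yielding the identity \eqref{energyrelation} with $(u,u_t)$ replaced by $(u_n,\partial_t u_n)$. Because $F_0\in H^{3+\delta}$, the cubic term $\langle[u_n,F_0],u_n\rangle$ is absorbed by $\Pi_*(u_n)$ up to a lower-order perturbation, so $\cE(u_n,\partial_t u_n)$ is equivalent to $\|\partial_t u_n\|^2+\|\Delta u_n\|^2$. Hypothesis \eqref{assumpt1-0} together with Cauchy--Schwarz gives
\[
\int_0^t|\langle q(u_n^\tau,\tau),\partial_\tau u_n\rangle|\,d\tau\le\int_0^t\|\partial_\tau u_n\|^2\,d\tau+C\int_{-t^*}^t\|u_n(\sigma)\|_2^2\,d\sigma,
\]
and the $Lu_n$ and $p_0$ terms are estimated similarly. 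A Gronwall argument (in which the prehistory norm of $\eta$ enters as an additive constant) produces a uniform bound
\[
\sup_{[0,T]}\big(\|\partial_t u_n(t)\|^2+\|\Delta u_n(t)\|^2\big)\le C_T\big(\|(u_0,u_1,\eta)\|_{\bH},\|p_0\|,\|F_0\|_{3+\delta}\big).
\]

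Next I would extract subsequences so that $u_n\rightharpoonup u$ weak-$*$ in $L_\infty(0,T;H_0^2(\Om))$ and $\partial_t u_n\rightharpoonup u_t$ weak-$*$ in $L_\infty(0,T;L_2(\Om))$; the Aubin--Lions lemma then yields $u_n\to u$ strongly in $C(0,T;H_0^{2-\epsilon}(\Om))$. The local Lipschitz property of $u\mapsto[u,v(u)+F_0]$ from $H_0^2$ into $L_2$ supplied by Lemma~\ref{l:airy-1}, together with the continuity of $L$ and the linear continuity of $v\mapsto q(v,t)$, suffices to pass to the limit in the variational formulation. Uniqueness follows by a standard argument: if $u,\tilde u$ are two weak solutions with the same data, the difference $w$ obeys a linear plate equation whose right-hand side is locally Lipschitz in $H_0^2\times L_2\times L_2(-t^*,0;H_0^2)$; testing by $w_t$ and applying Gronwall (accounting for the delay contribution through \eqref{assumpt1-0}) yields $w\equiv 0$.

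The main obstacle is the energy identity \eqref{energyrelation} and the strong continuity $u\in C(0,T;H_0^2)\cap C^1(0,T;L_2)$. Because $u_t$ lies only in $L_2$ in the non-rotational case, the pairing $\langle f(u),u_t\rangle$ cannot be taken at face value, and the standard multiplier method on the Galerkin level must be combined with a density argument. My plan is to first establish the identity for smoother data $(u_0,u_1,\eta)\in (H^4\cap H_0^2)\times H_0^2\times L_2(-t^*,0;H^4\cap H_0^2)$, where higher-order energy estimates yield a strong solution for which \eqref{energyrelation} is obtained by direct differentiation of $\cE(u,u_t)$ along the flow and use of the sharp Airy stress function identities $\langle[u,v(u)],u_t\rangle=\tfrac12 \tfrac{d}{dt}\|\Delta v(u)\|^2/2$ (Lemma~\ref{l:airy-1}). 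The identity then extends to arbitrary weak solutions by approximation, exploiting the continuous dependence established in the uniqueness step and the continuity of the delay functional in its argument. Strong continuity of $(u,u_t)$ in $\bH$ is a byproduct: weak continuity is immediate from the equation, and the identity supplies continuity of $\cE(u(t),u_t(t))$, which together upgrade weak to strong continuity in the energy topology.
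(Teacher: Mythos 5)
Your proposal is correct and follows essentially the same route as the paper, which simply invokes the Galerkin construction of \cite[Theorem 3.1.1 and Section 4.1.6]{springer} modified by (i) the sharp regularity of the Airy stress function (Lemma~\ref{l:airy-1}) to handle $f(u)$ when $u_t$ lies only in $L_2(\Omega)$, and (ii) the $L_2$-bound \eqref{assumpt1-0} on the delay term (i.e.\ estimate \eqref{hidden1}) in the Gronwall step. Your write-up supplies the details the paper delegates to that citation, but the ingredients --- Galerkin with projected prehistory, energy multiplier plus Gronwall, Aubin--Lions and local Lipschitz passage to the limit, and a density argument for the energy identity and strong continuity --- are the same.
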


As stated in \cite[Section 4.1.6, p.221]{springer},
the proof of Proposition \ref{p:well} requires only minor modifications  with respect to the proof of the related result in
 Theorem 3.1.1 \cite{springer} on p.190.
 Since Theorem 3.1.1 deals with rotational models ($\gamma > 0 $), in order to handle the effect of nonlinear term we
 rely on the {\it sharp regularity} of Airy's stress function $v(u)$, given below:
 \begin{lemma}\label{l:airy-1}
 The function $v(u,w)$ defined in (\ref{airy-1}) satisfies
 \begin{enumerate}
 \item
 $|v(u,w)|_{W^{2,\infty}(\Omega) } \leq C ||u||_2||w||_2 $.
 \item
 The map $u,w \rightarrow v(u,w) $ is locally Lipschitz from
 $H_0^2(\Omega) \times H_0^2(\Omega) \rightarrow W^{2,\infty}(\Omega)$.
 \end{enumerate}
 \end{lemma}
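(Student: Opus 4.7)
Assertion 2 follows immediately from assertion 1 by bilinearity of $v(\cdot,\cdot)$: writing $v(u_1,w_1) - v(u_2,w_2) = v(u_1-u_2,w_1) + v(u_2,w_1-w_2)$ and applying the bound of assertion 1 to each summand (with $\|u_i\|_2,\|w_i\|_2 \le R$) yields the local Lipschitz estimate. The substance of the lemma is therefore assertion 1, which I would establish in two main steps.

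\textit{Hardy-space structure of $[u,w]$.} A direct manipulation using only equality of mixed partials shows that the bracket is a difference of Jacobians of $H^1$ functions:
\begin{equation*}
[u,w] = J(\partial_x u,\partial_y w) - J(\partial_y u,\partial_x w),\qquad J(f,g) := \partial_x f\,\partial_y g - \partial_y f\,\partial_x g.
\end{equation*}
Since $u,w\in H_0^2(\Omega)$, I extend both by zero to all of $\realstwo$; the clamped boundary conditions make the extensions elements of $H^2(\realstwo)$, so each factor $\partial_i u,\partial_j w$ lies in $H^1(\realstwo)$ with norm controlled by $\|u\|_2,\|w\|_2$. The Coifman--Lions--Meyer--Semmes theorem then gives
\begin{equation*}
\|[u,w]\|_{\mathcal{H}^1(\realstwo)} \le C\|u\|_2\|w\|_2,
\end{equation*}
where $\mathcal{H}^1$ is the real Hardy space. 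The naive estimate $[u,w]\in L^1$ from H\"older's inequality gives the same norm bound but would be too weak for the $W^{2,\infty}$ conclusion.

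\textit{Passage to $W^{2,\infty}$ via $\mathcal{H}^1$--$BMO$ duality.} Let $G(x,y)$ denote the Green function of $\Delta^2$ on $\Omega$ with clamped boundary conditions, so that $v(x) = -\int_\Omega G(x,y)[u,w](y)\,dy$. The two-dimensional biharmonic fundamental solution behaves like $|x-y|^2\log|x-y|$, whose second derivatives are logarithmically singular and therefore belong to $BMO(\realstwo)$; combined with Agmon--Miranda-type regularity for the regular part of $G$ on the smooth domain $\Omega$, this yields $\sup_{x\in\Omega}\|D_y^2 G(x,\cdot)\|_{BMO}\le C$. Fefferman--Stein duality then gives, uniformly in $x\in\Omega$,
\begin{equation*}
|D^2 v(x)| \le C\,\|D_y^2 G(x,\cdot)\|_{BMO}\,\|[u,w]\|_{\mathcal{H}^1} \le C\|u\|_2\|w\|_2,
\end{equation*}
which is precisely assertion 1 (the lower-order norms of $v$ and $\nabla v$ follow by Poincar\'e's inequality, using $v=\partial_\nu v=0$ on $\partial\Omega$).

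\textit{Main obstacle.} The critical point is the coupling of compensated compactness with duality: absent the Jacobian cancellation, one would only know $[u,w]\in L^1$, and the biharmonic operator does not gain three full derivatives from an $L^1$ source in this scale, so the $W^{2,\infty}$ conclusion would fail. It is precisely the Hardy-space refinement that pairs optimally with the $BMO$-control of the biharmonic kernel's second derivatives. A secondary technical issue is verifying the uniform $BMO$ bound on $D_y^2 G(x,\cdot)$ as $x$ approaches $\partial\Omega$, which requires smoothness of $\partial\Omega$ together with classical regularity theory for the biharmonic Green function.
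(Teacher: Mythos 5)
Your argument is correct and is essentially the standard proof of this ``sharp regularity'' result: the paper itself does not prove Lemma~\ref{l:airy-1} but cites it from \cite{springer}, and the proof there rests on exactly the two ingredients you identify — the Jacobian (div--curl) structure of the von Karman bracket placing $[u,w]$ in the Hardy space $\mathcal{H}^1(\realstwo)$ via Coifman--Lions--Meyer--Semmes, followed by $\mathcal{H}^1$--$BMO$ duality against the second derivatives of the clamped biharmonic Green function. The only points to flesh out in a complete write-up are the ones you already flag: the uniform $BMO$ bound on $D_y^2G(x,\cdot)$ up to the boundary, and the (routine) extension of that kernel from $\Omega$ to a $BMO(\realstwo)$ function so that the Fefferman--Stein pairing with the compactly supported $\mathcal{H}^1$ density applies.
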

 This lemma easily implies that $f(u) $ in (\ref{plate}) given by
 $f(u) = - [u, v(u) + F_0 ] $ is locally Lipschitz.

 Since the topology for the velocity $u_t$ is now $L_2(\Omega) $, (rather than $H_0^1(\Omega)$, as in the rotational
 case $\ga>0$), we use inequality (\ref{assumpt1-0}) rather than (3.3.5) p. 189 \cite{springer}.
 This modification allows us to repeat the arguments in \cite{springer}, in order to conclude with the statements of  Proposition~\ref{p:well}. For this we also make use of the first part of the following lemma:

 \begin{lemma}\label{le:q} We denote $q^u(t)=q(u^t,t)$.
 Let Assumption~\ref{as:1} be in force. Then
\begin{align}\label{hidden1}
\Big|\int_0^t <q^u(\tau),u_t(\tau)> d\tau\Big| \le&~   C\e^{-1} t^*  \int_{-t^*}^t||u(\tau)||_2^2d\tau +\e \int_0^t ||u_t(\tau)||^2d\tau,~~~\forall \e>0,~\forall t\in[0,T],
\end{align}
for any  $u \in L_2(-t^*,T;H^2(\Omega))\cap W^1_2(0,T;L_2(\Omega))$.
\par
If, in addition, we assume Assumption~\ref{as:2}, then there exists $\eta_*>0$ such that for every $\epsilon>0$  we have the estimate:
\begin{align}\label{hidden2} \Big|\int_0^t <q^u(\tau),u_t(\tau)> d\tau\Big| \le& ~ \epsilon\int_{-t^*}^t ||u(\tau)||_{2}^2d\tau +C(t^*,\e)\cdot(1+T)\sup_{[0,t]}||u(\tau)||^2_{2-\eta_*},~~\forall t\in[0,T],
\end{align}
 for  any $u \in L_2(-t^*,T;H^2(\Omega))\cap C(0,T;H^{2-\eta_*}(\Omega))\cap C^1(0,T;L_2(\Omega))$.
 \end{lemma}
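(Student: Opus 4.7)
The plan is to prove \eqref{hidden1} by a direct Cauchy--Schwarz argument and to prove \eqref{hidden2} by integrating by parts in time, so as to transfer the time derivative from $u_t$ onto $q^u$ — where \eqref{assumpt1.75} provides an anisotropic gain (one time derivative costs only $\eta$ spatial derivatives) that produces a compactness margin.

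For \eqref{hidden1}, Cauchy--Schwarz and Young's inequality give
\[
\Big|\int_0^t <q^u(\tau), u_t(\tau)> d\tau\Big| \le \varepsilon \int_0^t \|u_t(\tau)\|^2 d\tau + \frac{C}{\varepsilon}\int_0^t \|q^u(\tau)\|^2 d\tau.
\]
Applying \eqref{assumpt1-0} to $\|q^u(\tau)\|^2$ and then changing the order of integration in the resulting double integral $\int_0^t\!\int_{\tau-t^*}^\tau \|u(s)\|_2^2\,ds\,d\tau$ produces the prefactor $t^*$ and extends the range to $[-t^*,t]$, yielding the claim. No further ideas are needed.

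For \eqref{hidden2}, since $u \in C(0,T; H^{2-\eta_*})\cap C^1(0,T; L_2)$ and \eqref{assumpt1.75} furnishes $\partial_\tau q^u(\tau)$ as an element of $H^{-(2-\eta)}(\Omega)$ for almost every $\tau$, integration by parts in time is justified:
\[
\int_0^t <q^u(\tau), u_t(\tau)> d\tau = <q^u(t),u(t)> - <q^u(0),u(0)> - \int_0^t <\partial_\tau q^u(\tau), u(\tau)> d\tau.
\]
Here we fix $\eta_* \in (0,\min\{\eta,\, 2-\sigma,\, 1\}]$ so that $H^{2-\eta_*}_0(\Omega)\hookrightarrow H^\sigma(\Omega)$ and both \eqref{assumpt1} and \eqref{assumpt1.75} can be invoked with the common lower-order norm $\|u\|_{2-\eta_*}$. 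The boundary terms are then estimated via the $H^{-\sigma}\times H^\sigma$ duality: \eqref{assumpt1} gives $\|q^u(t)\|_{-\sigma}\le C\big(\int_{t-t^*}^t\|u\|_2^2\,d\tau\big)^{1/2}$, and Young's inequality splits the product into $\varepsilon\int_{t-t^*}^t\|u\|_2^2\,d\tau + C(t^*,\varepsilon)\|u(t)\|_{2-\eta_*}^2$; the $\tau=0$ contribution is handled identically using the prehistory data. For the integral term, evaluating \eqref{assumpt1.75} at $\psi = u(\tau)$ and pulling out the factor $\sup_{[0,t]}\|u\|_{2-\eta_*}$, the three remaining time-integrals of $\|u(\cdot)\|_2$ are each bounded by $C(T,t^*)\big(\int_{-t^*}^t\|u\|_2^2\,d\tau\big)^{1/2}$ via Cauchy--Schwarz in $\tau$ — using Fubini for the double-integral term $\int_0^t\!\int_{-t^*}^0 \|u(\tau+s)\|_2\,ds\,d\tau$ — and a final Young's inequality delivers the $\varepsilon\int_{-t^*}^t\|u\|_2^2\,d\tau$ and $(1+T)\cdot C(t^*,\varepsilon)\sup_{[0,t]}\|u\|_{2-\eta_*}^2$ pieces of \eqref{hidden2}.

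The principal obstacle — and the sole reason Assumption~\ref{as:2} is required beyond Assumption~\ref{as:1} — is that $\|q^u(\tau)\|$ is only controlled by the full $H^2$ norm of $u$, so pairing naively with $u_t\in L_2$ leaves no compactness gap at the top order. Integration by parts trades this for a pairing with $u$ itself, and the strictly \emph{anisotropic} derivative loss in \eqref{assumpt1.75} (time differentiation costs $\eta<2$ spatial derivatives, not $2$) is exactly what produces a strictly weaker norm $\|\cdot\|_{2-\eta_*}$ on the right-hand side; this weaker norm is what will later be absorbed through a compact embedding in the quasi-stability argument. Without this anisotropy no such estimate is available, and the transition to the non-rotational case $\gamma=0$ would fail.
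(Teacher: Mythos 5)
Your proposal is correct and follows essentially the same route as the paper's own proof: \eqref{hidden1} via Cauchy--Schwarz/Young together with \eqref{assumpt1-0} and the interchange of the order of integration, and \eqref{hidden2} via integration by parts in time, estimating the boundary terms through the $H^{-\sigma}\times H^{\sigma}$ duality with \eqref{assumpt1} and the bulk term by applying \eqref{assumpt1.75} with $\psi=u(\tau)$, then absorbing the top-order pieces by Young's inequality with $\eta_*=\min\{\eta,2-\sigma\}$. The only (immaterial) difference is that you pull out $\sup_{[0,t]}\|u\|_{2-\eta_*}$ and use Cauchy--Schwarz in $\tau$ where the paper splits $\|u(\tau)\|_2\|u(\tau)\|_{2-\eta}$ pointwise by Young's inequality.
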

 For the proof we refer to Section~\ref{appendix}.
 \smallskip\par

 In order to consider the delayed system as a dynamical system with the phase space $\mathbf{H}$ we recall  the notation:
 $u^t(s) \equiv u(t+s) , s \in [-t^*, 0 ]$.
 With the above notation we introduce the  operator
$ S_t\, : \bH\mapsto \bH$ by the formula
\begin{equation}\label{semigroup}
  S_t(u_0, u_1, \eta) \equiv (u(t), u_t(t), u^t),
\end{equation}
where $u(t)$ solves \eqref{plate}.
   Proposition \ref{p:well} implies the following conclusion
 \begin{corollary}\label{co:generation}
 $S_t : \mathbf{H} \rightarrow \mathbf{H}  $ is a strongly continuous semigroup on $\mathbf{H}$.
 \end{corollary}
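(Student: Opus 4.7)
The plan is to verify, using Proposition~\ref{p:well}, the three defining properties of a (nonlinear) strongly continuous semigroup on $\bH$: (i) $S_0=I$; (ii) the semigroup law $S_{t+s}=S_t\circ S_s$; and (iii) continuity of $(t,y_0)\mapsto S_t y_0$ in both arguments. Item (i) is immediate from the definition \eqref{semigroup} and the initial conditions in \eqref{plate}. Item (ii) follows from uniqueness of weak solutions combined with the translation invariance of problem \eqref{plate}: the maps $t\mapsto S_{t+s}y_0$ and $t\mapsto S_t(S_s y_0)$ both define weak solutions starting from the same data $S_s y_0$ at $t=0$, and uniqueness forces them to coincide.

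For the continuity in $t$ at fixed $y_0$, the first two components $(u(t),u_t(t))$ are continuous in $H_0^2(\Om)\times L_2(\Om)$ by virtue of the regularity $u\in C(0,T;H_0^2(\Om))\cap C^1(0,T;L_2(\Om))$ given in Proposition~\ref{p:well}. For the history component, I would extend $u$ to $[-t^*,\infty)$ by declaring $u|_{(-t^*,0)}=\eta$; the extended function belongs to $L_2^{loc}([-t^*,\infty);H_0^2(\Om))$, and the map $t\mapsto u^t$ is then just the translation operator restricted to $[-t^*,0]$. Strong $L_2$-continuity of translations is classical (prove it by density of continuous functions), and this gives continuity of $t\mapsto u^t$ into $L_2(-t^*,0;H_0^2(\Om))$.

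The substantive piece is continuous dependence on initial data. Let $y_0=(u_0,u_1,\eta)$, $z_0=(v_0,v_1,\xi)$ have $\bH$-norms bounded by $R$, and denote the corresponding weak solutions by $u,v$. By linearity of $q$ and subtraction, $w=u-v$ satisfies
\begin{equation*}
w_{tt}+\Delta^2 w+kw_t+Lw = [f(v)-f(u)]+q(w^t,t),\qquad w|_{(-t^*,0)}=\eta-\xi,
\end{equation*}
with initial data $(u_0-v_0,u_1-v_1)$. Testing with $w_t$ and integrating yields the familiar energy identity for the linear plate with RHS forcing; the nonlinear contribution is controlled by the local Lipschitz estimate from Lemma~\ref{l:airy-1}, namely $\|f(u)-f(v)\|\le C(R)\|w\|_2$, the $L$-contribution by Assumption~\ref{as:1}, and the delay contribution by \eqref{hidden1} in Lemma~\ref{le:q} applied to $w$. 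Choosing $\e$ small enough in \eqref{hidden1} to absorb $\e\int_0^t\|w_t\|^2 d\tau$ on the left, a standard Gronwall argument produces
\begin{equation*}
\|S_t y_0-S_t z_0\|_\bH^2 \le C(R,T)\|y_0-z_0\|_\bH^2,\qquad t\in[0,T],
\end{equation*}
which yields continuous dependence and, together with the already-established continuity in $t$, gives joint continuity of $(t,y_0)\mapsto S_t y_0$.

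The main obstacle I anticipate is precisely the delay term $\int_0^t\langle q(w^\tau,\tau),w_t(\tau)\rangle\,d\tau$ in the energy estimate for $w$: since $w_t$ is only in $L_2(\Om)$, pairing it against $q(w^t,t)$ via Cauchy--Schwarz combined with \eqref{assumpt1-0} alone would not close the estimate. Inequality \eqref{hidden1} is tailored for this absorption, and once it is used, the remaining contributions are either standard lower-order terms in $w$ on $[0,T]$ or the history piece $\int_{-t^*}^0\|\eta-\xi\|_2^2 d\tau$, both of which are Gronwall-friendly. Combined with (i)--(ii), this establishes that $\{S_t\}_{t\ge 0}$ is a strongly continuous semigroup on $\bH$.
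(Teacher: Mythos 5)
Your proposal is correct and follows essentially the same route as the paper: the semigroup law from uniqueness, continuity in $t$ from the regularity in Proposition~\ref{p:well}, and Lipschitz continuous dependence on data via the energy identity for the difference $z$, the local Lipschitz bound from Lemma~\ref{l:airy-1}, the estimate \eqref{hidden1} for the delay term, and Gronwall (this is exactly the paper's Lemma~\ref{l:lip}). The only cosmetic difference is that the paper does not absorb $\e\int_0^t\|z_t\|^2$ into the left-hand side (which would fail for $k=0$) but simply folds it into the Gronwall term $\int_0^t E_z\,d\tau$; your estimate closes either way.
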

\begin{proof}
Strong continuity is stated in Proposition~\ref{p:well}.
 The semigroup property  follows from uniqueness.
To prove continuity with respect to initial data we use the following assertion.

\begin{lemma}\label{l:lip}
Suppose $u^i(t)$ for $i=1,2$ are weak solutions to (\ref{plate}) with different initial data and $z=u^1-u^2$. Additionally assume that
\begin{equation}\label{bnd-R}
||u_t^i(t)||^2+||\Delta u^i(t)||^2 \le R^2, ~i=1,2
\end{equation}
 for some $R>0$ and all $t \in [0,T]$. Then there exists $C>0$ and $a_R\equiv a_R(t^*)>0$  such that
\begin{align}\label{dynsysest}
||z_t(t)||^2+||\Delta z(t)||^2 \le &~ Ce^{a_Rt}\Big\{||\Delta(u^1_0-u^2_0)||^2+||u^1_1-u^2_1||^2+\int_{-t^*}^0||\eta^1(\tau)-\eta^2(\tau)||_2^2d\tau\Big\}
\end{align}
for all $t \in [0,T]$.
\end{lemma}
\begin{proof}
We have that $z$ solves the following problem
\begin{equation}\label{difference}
\begin{cases}z_{tt}+\Delta ^2 z + k z_t+f(u^1)-f(u^2)=q^z(t)-Lz,
 \\ z=\Dn z = 0 \text{ on $\partial \Omega$ },
 \\ z(0)=z_0 \in H^2_0(\Omega), ~z_t(0)=z_1 \in L_2(\Omega), ~z|_{(-t^*,0)}\in L_2(-t^*,0;H_0^2(\Omega)),
 \end{cases}
\end{equation}
where as above we denote $q^z(t)=q(z^t,t)$.
Let
\begin{equation}\label{Ez}
E_z(t) \equiv \dfrac{1}{2}\big\{||\Delta z(t)||^2 + ||z_t(t)||^2\big\},
\end{equation}
then
making use of the the energy equality for the difference  $z$
\begin{align*}%\label{zenergyrelation-00}
\Ez(t)+k\int_0^t ||z_t||^2 d\tau =& ~\Ez(0) -\int_0^t <f(u^1)-f(u^2),z_t>d\tau +\int_0^t<q^z(\tau),z_t(\tau)> d\tau
\\ \notag
&
-\int_0^t<Lz(\tau),z_t(\tau)>d\tau
\end{align*}
 we have the following:
\begin{equation*}
E_z(t) + k\int_0^t ||u_t||^2 d\tau \le E_z(0)+\int_0^t\left[\|f(u^1)-f(u^2)\|+ \| L z\|\right] \|z_t\| d\tau | + \Big|\int_0^t<q^z,z_t>d\tau  \Big|.
\end{equation*}
From here, we apply the estimates in (\ref{hidden1}) for $q^z$ with $\epsilon=1/t^*$, and note the locally Lipschitz character of the von Karman nonlinearity $f(u)$, yielding
\begin{equation*}
E_z(t) + k\int_0^t ||u_t||^2 d\tau \le C_0\Big(E_z(0)+\int_{-t^*}^0 ||z(\tau)||^2_2d\tau\Big)+C(R,t^*)\int_0^t E_z(\tau) d\tau.
\end{equation*}
Hence, Gronwall's inequality yields the desired estimate in \eqref{dynsysest}.
\end{proof}
Using Lemma~\ref{l:lip} we obtain that
\begin{equation*}
||S_{t}y_{1}-S_{t}y_{2}||_{\bH}^{2}\leq C e^{a_Rt}||y_{1}-y_{2}||_{\bH}^{2}
\end{equation*}
for $S_ty_i=(u^i(t), u^i_t(t), [u^i]^t)$ with $y_i=(u^i_0, u^i_1, \eta)$,
such that \eqref{bnd-R} holds.
This allows us to conclude the proof of Corollary~\ref{co:generation}.
\end{proof}

\section{Statement of Main Results}\label{Sect3}
Our main results deal with (1) long-time dynamics of the system $(S_t,\bH)$ generated by \eqref{plate} and  (2) its connection with  the flow-structure dynamics governed by \eqref{flowplate}.

\begin{theorem}\label{maintheorem}
Let   both Assumptions \ref{as:1} and \ref{as:2} be in force
and  $(S_t,\bH)$ be the dynamical system generated
weak solutions   to the system in \eqref{plate} with $k > 0 $  on the space $\bH \equiv H_0^2(\Omega)\times L_2(\Omega)\times L_2(-t^*,0;H_0^2(\Omega))$ with evolution operator given by \eqref{semigroup}.
 Then the  system $(S_t,\bH)$ has a compact global attractor $\bA$ of finite fractal dimension. The attractor can be characterized as the set of all bounded full trajectories. Moreover, the set $\bA$ has additional regularity; namely, any full trajectory $y(t)=(u(t),u_t(t),u^t)\subset \bA$, $t\in\R$, has the property that $u \in L_{\infty}(\R;H^4(\Omega)\cap H_0^2(\Omega))$ and $u_t \in L_{\infty}(\R;H^2(\Omega))$.
\end{theorem}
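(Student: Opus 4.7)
The plan is to follow the three-step route developed in \cite{springer,ch-l,cl-hcte}: first establish dissipativity (a bounded absorbing set in $\bH$); second, verify asymptotic smoothness of $(S_t,\bH)$, which combined with dissipativity yields a compact global attractor $\bA$; and third, prove a quasi-stability estimate on $\bA$, from which the finiteness of fractal dimension and the additional regularity $u\in L_\infty(\R;H^4\cap H_0^2)$, $u_t\in L_\infty(\R;H^2)$ follow via the abstract machinery of \cite{springer}. Throughout, the central technical ingredient is the ``hidden compactness'' estimate \eqref{hidden2} for the delayed forcing, which compensates for the lack of compactness caused by the absence of rotational inertia.

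For dissipativity, I would work from the energy identity \eqref{energyrelation} and combine it with the multiplier $u$ to produce a $\|\Delta u\|^2$ contribution. The damping term $k\int_s^t\|u_t\|^2\,d\tau$ controls the velocity; the integral $\int\langle q^u,u_t\rangle d\tau$ is absorbed via \eqref{hidden1} with small $\e$; the lower-order operator $L$ is handled by Assumption~\ref{as:2} and interpolation; and the von Karman potential enters through the energy equivalence $\cE(u,u_t)+c_1\ge c_2(\|\Delta u\|^2+\|u_t\|^2)$ that follows from Lemma~\ref{l:airy-1}. An auxiliary history functional $\Phi(t)=\int_{t-t^*}^{t}(\tau-t+t^*)\|u(\tau)\|_2^{2}\,d\tau$ added to $\cE$ accounts for the prehistory component of $\bH$. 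A Gronwall argument applied to the resulting differential inequality $\tfrac{d}{dt}V+\beta V\le C$ yields a bounded absorbing set $B_0\subset\bH$.

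For asymptotic smoothness I would apply the Chueshov--Lasiecka compactness criterion \cite[Prop.~7.9.3]{springer} to the difference $z=u^1-u^2$ of two trajectories with initial data in $B_0$. Lemma~\ref{l:lip} already bounds $\|z(t)\|_{\bH}$ in terms of $\|z(0)\|_{\bH}$. In the energy equality for $z$, the three terms that threaten to obstruct asymptotic smoothness are the difference of von Karman nonlinearities, the operator $L$, and the delay force. Lemma~\ref{l:airy-1} identifies $f(u^1)-f(u^2)$ as a compact perturbation (Lipschitz into $W^{2,\infty}$); the continuity $L:H^{2-\delta}_0\to L_2$ makes $L$ compact on $\bH$; and, crucially, \eqref{hidden2} supplies the compact pseudonorm $C(T,\e)\sup_{[0,T]}\|z(s)\|_{2-\eta_*}^{2}$ for the delay contribution. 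This is where the main obstacle lies: without the integration-by-parts bound \eqref{assumpt1.75}, one cannot test $q^z$ against $z_t\in L_2$ with a compactness margin, since $\|q^z\|$ has no a priori compactness in $L_2$ (this is precisely the defect caused by dropping rotational inertia). Combining the three bounds with \eqref{hidden1} and using $k>0$ yields asymptotic smoothness, and thus existence of the compact attractor $\bA$.

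To conclude with finite fractal dimension and regularity, I would refine Lemma~\ref{l:lip} on $\bA$ into a quasi-stability inequality
\[
\|S_t y_1-S_t y_2\|_{\bH}^{2}\le Ce^{-\om t}\|y_1-y_2\|_{\bH}^{2}+C\sup_{s\in[0,t]}\|u^1(s)-u^2(s)\|_{2-\eta_*}^{2},\quad y_i\in\bA,
\]
by coupling the compactness term from \eqref{hidden2} with the exponential decay produced by the damping $k>0$ and the multiplier method (and using backward-in-time invariance of $\bA$ to handle the history segment). The standard consequences \cite[Thms.~7.9.6, 7.9.8]{springer} of such a quasi-stability estimate on a positively invariant compact set then yield finite fractal dimension of $\bA$ and the boundedness $u_{tt}\in L_\infty(\R;L_2(\Omega))$ along full trajectories in $\bA$; feeding this back into \eqref{plate} and invoking Lemma~\ref{l:airy-1} together with elliptic regularity gives $\Delta^{2}u\in L_\infty(\R;L_2)$, whence $u\in L_\infty(\R;H^4\cap H_0^2)$ and, by interpolation with the energy bound, $u_t\in L_\infty(\R;H^2)$, completing the proof.
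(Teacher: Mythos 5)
Your overall architecture (Lyapunov function with a history functional for dissipativity, the compactness criterion of Theorem~\ref{psi} for asymptotic smoothness, then a quasi-stability estimate on $\bA$ feeding into Theorem~\ref{t:FD} for dimension and regularity) matches the paper's, and your treatment of the delay term via \eqref{assumpt1.75}--\eqref{hidden2} is exactly the right mechanism. However, there is a genuine gap in how you dispose of the von Karman nonlinearity. You assert that Lemma~\ref{l:airy-1} ``identifies $f(u^1)-f(u^2)$ as a compact perturbation (Lipschitz into $W^{2,\infty}$).'' It does not. The sharp regularity $v(u)\in W^{2,\infty}$ makes the Airy function compact, but the difference $f(u^1)-f(u^2)=[z,v(u^1)+F_0]+[u^2,v(u^1)-v(u^2)]$ still contains second derivatives of $z$, and the best available bound \eqref{f-est-lip} is $\|f(u^1)-f(u^2)\|_{-\delta}\le C\|z\|_{2-\delta}$ --- a loss of $\delta$ on \emph{both} sides. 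Paired against $z_t$, which in the non-rotational case lives only in $L_2(\Omega)$, the term $\int\langle f(u^1)-f(u^2),z_t\rangle\,d\tau$ is \emph{critical}, not lower order; this is precisely the second obstruction (alongside the delay term) that makes $\gamma=0$ hard. If this term were compact, one would not need Theorem~\ref{psi} at all.

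The paper closes this gap in two distinct ways, both exploiting the algebraic identity $-\langle f(u^1)-f(u^2),z_t\rangle=\tfrac14\tfrac{d}{dt}Q(z)+\tfrac12 P(z)$ of Theorem~\ref{nonest}. For asymptotic smoothness, $\int\langle\mathcal F(z),z_t\rangle$ is carried intact into the functional $\Psi$ and the compensated-compactness condition $\liminf_m\liminf_n\Psi(x_m,x_n)=0$ is verified by iterated weak limits (weak convergence of $z^{n,m}$ plus compactness of $u\mapsto\Delta v(u)$), not by a pointwise compactness bound. For quasi-stability, the critical term $P(z)$ in \eqref{4.9aa} is controlled only \emph{after} the attractor is known to exist: the velocities $u^i_t(\tau)$ range over a compact set in $L_2(\Omega)$, so they admit a finite $\epsilon$-net of smooth functions $\phi_j\in H_0^2(\Omega)$, and replacing $u^i_t$ by $\phi_j$ in $P(z)$ converts it into $C(\epsilon)\|z\|^2_{2-\eta}+\epsilon C\|z\|_2^2$. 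Your proposal, which goes straight from Lemma~\ref{l:lip} and \eqref{hidden2} to the quasi-stability inequality, skips this step entirely, and without it the estimate \eqref{quasi} cannot be obtained: the backward-smoothness route that would normally substitute for it is unavailable here because the delayed system is non-gradient, as the paper emphasizes. You should incorporate the $Q$/$P$ decomposition and the density/compactness argument on $\bA$ to make both the smoothness and the dimension steps rigorous.
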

We recall (see, e.g., \cite{Babin-Vishik,ch-0,lad,temam})   that
a \textit{global attractor} $\mathbf{A}$ is a closed, bounded set in
$\bH$ which is invariant (i.e., $S_t\mathbf{A}=\mathbf{A}$ for all
$t>0$) and uniformly attracts every bounded set $B$, i.e.
\begin{equation}  \label{dist-u}
\lim_{t\to+\infty}d_{\bH}\{S_t B|\bA\}=0,~~~\mbox{where}~~
d_{\bH}\{S_t B|\bA\}\equiv\sup_{y\in B}{\rm dist}_{\bH}(y,\bA),
\end{equation}
for any bounded  $B\in\bH$.
\par
The {\it fractal} (box-counting) {\it dimension} ${\rm dim}_f\bA$ of $\bA$ is defined by
\[
{\rm dim}_f\bA=\limsup_{\e\to 0}\frac{\ln n(\bA,\e)}{\ln (1/\e)}\;,
\]
where $n(M,\e)$ is the minimal number of closed balls in $\bH$ of the
radius $\e$ which cover the set $M$.

\begin{remark}\label{notgamma}
{\rm
We note that this type of additional regularity of solutions from the attractor mentioned in Theorem~\ref{maintheorem} is not possible in the case $\gamma>0$, owing to the fact that the principal term in the equation is $\Delta^2u-\gamma \Delta u_{tt}$. In this case, the presence of this term disallows the use of elliptic regularity theory (applied to the biharmonic term) on elements of the attractor. More importantly, when $\gamma > 0 $,  in order to obtain the strong attractiveness property a much stronger damping is necessary.
In order to stabilize the kinetic  part of the energy one will have to introduce $-\gamma \Delta u_t$ as a damping term (see section 9.3 in \cite{springer}). The point we want to stress is that in our case only $u_t$ as a damping term is needed.
And, in fact, it is this term that will be generated by the flow from ``thin air". As a consequence, the plate (in the full flow-plate system) will require no mechanical damping at all.
}
\end{remark}

Having established the quasicompact character of the delayed  potential $q^u$ as in (\ref{potential}) (showing that it satisfies the conditions (\ref{assumpt1-0})--(\ref{assumpt1.75}))  we can now apply  Theorem \ref{maintheorem}
 to the von Karman flow-plate model in \eqref{flowplate}.
\begin{theorem}\label{th:main2}
Suppose $0\le U \ne 1$, $k\ge 0$, $F_0 \in H^{3+\delta}(\Omega)$ and $p_0 \in L_2(\Omega)$.
 Then there exists a compact set $\mathscr{U} \subset H_0^2(\Omega) \times L_2(\Omega)=\mathcal H$ of finite fractal dimension such that $$\lim_{t\to\infty} d_{\mathcal H} \big( (u(t),u_t(t)),\mathscr U\big)=\lim_{t \to \infty}\inf_{(\nu_0,\nu_1) \in \mathscr U} \big( ||u(t)-\nu_0||_2^2+||u_t(t)-\nu_1||^2\big)=0$$
for any weak solution $(u,u_t;\phi,\phi_t)$ to (\ref{flowplate}) with
initial data
$$
(u_0, u_1;\phi_0,\phi_1) \in H\equiv H_0^2(\Omega)\times L_2(\Omega)\times H^1(\realsthree_+)\times L_2(\realsthree_+)
$$
which are
localized  in $\R_+^3$ (i.e., $\phi_0(\xb)=\phi_1(\xb)=0$ for $|\xb|>R$ for some $R>0$). Additionally, we have the additional regularity $\mathscr{U} \subset \big(H^4(\Omega)\cap H_0^2(\Omega)\big) \times H^2(\Omega)$.
\end{theorem}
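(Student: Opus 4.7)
The plan is to reduce the flow-plate system \eqref{flowplate} to the delayed plate problem \eqref{plate} and then invoke Theorem~\ref{maintheorem}. By Theorem~\ref{rewrite}, the localization hypothesis on $(\phi_0,\phi_1)$ guarantees that for all $t > t^{\#}$ the plate component $u(t)$ satisfies \eqref{reducedplate}. Regrouping terms, this takes the form \eqref{plate} with effective damping coefficient $k+1\geq 1>0$ (the $-u_t$ piece of the reduced pressure is absorbed into the LHS), with $Lu = U\partial_x u$, and with delay term $q(u^t,t) = -q^u(t)$ given by \eqref{potential}. In particular, the flow itself supplies genuine viscous damping regardless of whether $k=0$ or $k>0$, which is exactly what is needed to apply Theorem~\ref{maintheorem}.

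The next step is to check that $L$ and $q$ satisfy Assumptions~\ref{as:1} and \ref{as:2}. The operator $Lu = U\partial_x u$ is bounded from $H_0^{2-\delta}(\Omega)$ into $L_2(\Omega)$ for any $\delta \in [0,1]$. Proposition~\ref{pr:q} then supplies everything required for the delay term: \eqref{qnegest2} gives \eqref{assumpt1-0}; \eqref{qnegest} gives \eqref{assumpt1} with $\sigma=1$; and the anisotropic bound \eqref{qnegest4} yields \eqref{assumpt1.75} with $\eta=1$. With both assumptions in force, Theorem~\ref{maintheorem} produces a compact global attractor $\mathbf{A} \subset \mathbf{H}$ of finite fractal dimension for the delayed semigroup $S_t$, and any trajectory in $\mathbf{A}$ enjoys the regularity $u \in L_\infty(\R; H^4(\Omega)\cap H_0^2(\Omega))$, $u_t \in L_\infty(\R; H^2(\Omega))$.

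I would then define $\mathscr{U} = \Pi(\mathbf{A})$, where $\Pi\colon \mathbf{H} \to \mathcal{H}$ is the canonical projection $(u_0,u_1,\eta)\mapsto (u_0,u_1)$. Since $\Pi$ is Lipschitz, $\mathscr{U}$ is compact, its fractal dimension is bounded by that of $\mathbf{A}$, and the inclusion $\mathscr{U}\subset (H^4(\Omega)\cap H_0^2(\Omega))\times H^2(\Omega)$ follows directly from the regularity of $\mathbf{A}$. For the attraction property, fix localized initial data $y_0 \in H$ for \eqref{flowplate}. Theorem~\ref{well-U} yields $u\in C([0,\infty);H_0^2(\Omega))$ and $u_t\in C([0,\infty);L_2(\Omega))$, hence the ``snapshot''
\begin{equation*}
z_0 \equiv \bigl(u(t^{\#}),\,u_t(t^{\#}),\,u|_{(t^{\#}-t^*,\,t^{\#})}\bigr)
\end{equation*}
is a well-defined (bounded) element of $\mathbf{H}$. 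By Theorem~\ref{rewrite}, the $S_t$-orbit of $z_0$ coincides in its first two components with $(u(t+t^{\#}),u_t(t+t^{\#}))$ for $t\geq 0$. Since $\mathbf{A}$ attracts the bounded singleton $\{z_0\}$ in $\mathbf{H}$, applying $\Pi$ gives the desired $d_{\mathcal{H}}((u(t),u_t(t)),\mathscr{U}) \to 0$.

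The main obstacle I anticipate is less computational than conceptual: cleanly identifying the prehistory slice $u|_{(t^{\#}-t^*,\,t^{\#})}$ as a valid element of $L_2(-t^*,0;H_0^2(\Omega))$ and ensuring that the two evolutions (full flow-plate, and delayed plate starting from $t^{\#}$) genuinely agree for $t\geq t^{\#}$. The first issue is settled by the $C([0,\infty);H_0^2(\Omega))$ regularity of the plate component in Theorem~\ref{well-U}, and the second is precisely the content of Theorem~\ref{rewrite}. Once these matchings are secured, the remaining content is the standard observation that compactness, finite-dimensionality, and regularity of $\mathbf{A}$ all descend through the Lipschitz projection $\Pi$.
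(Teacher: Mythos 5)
Your proposal is correct and follows essentially the same route as the paper's own (very brief) proof: reduce via Theorem~\ref{rewrite}, absorb the $-u_t$ term from the reduced pressure to obtain strictly positive damping, verify Assumptions~\ref{as:1} and \ref{as:2} through Proposition~\ref{pr:q}, apply Theorem~\ref{maintheorem}, and take $\mathscr{U}$ to be the projection of the attractor. Your explicit verification of the exponents ($\sigma=1$, $\eta=1$) and the snapshot construction at $t^{\#}$ merely fill in details the paper leaves to the reader.
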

\begin{proof}
The proof of this result follows from rewriting the  dynamical system  $(T_t, H )$ generated by    (\ref{flowplate}) as the delayed  system in (\ref{reducedplate}). The latter  is possible for sufficiently large times by Theorem \ref{rewrite}. Since the delayed  potential $q^u$ was shown to satisfy Assumptions~\ref{as:1} and \ref{as:2}, we may apply our main result, Theorem \ref{maintheorem}, to the dynamical system generated by the weak solution to (\ref{reducedplate}) on the space $\bH=H_0^2(\Omega)\times L_2(\Omega) \times L_2(-t^*,0;H_0^2(\Omega))$. This yields a compact global attractor $\mathbf A \subset \bH$ of finite dimension  and additional regularity; we then take $\mathscr U$ to be the projection of $\mathbf{A}$ on $H$, which concludes the proof  as in  \cite{springer}.
\end{proof}
\begin{remark}
{\rm
We here reiterate that the above result holds in the absence of imposed damping, i.e., with $k=0$ in
 \eqref{flowplate}. Utilizing the natural damping appearing in the reduced flow pressure, we see that in the case of $\gamma = 0$,  {\it the flow naturally provides a stabilizing effect} to the dynamics in that it yields the existence of the compact attractor. This is \textit{not} the case when $\gamma >0$, as the nature of the damping must be (necessarily) stronger (see Remark \ref{notgamma}).}
\end{remark}
\par
\begin{remark}
{\rm
It should also be noted here that because we have rewritten our problem \eqref{flowplate} as a reduced delayed plate, and additionally changed the state space upon which we are operating, the  results obtained  on long-time behavior \textit{ will not be invariant } with respect to the flow component of the model, i.e. our global attractors will be with respect to the state space $\bH$, as defined above. Again, the data in the form of the delayed  term $u|_{(-t^*,0)}$
contains the information  from the flow itself. Obtaining global attractors for the full state space corresponding to $(u,u_t;\phi,\phi_t) \in H^2_0(\Omega) \times L_2(\Omega) \times H^1(\realsthree_+)\times L_2(\realsthree_+)$ is not  a
realistic task from  the  mathematical point of view. There is no damping imposed  on the system, thus the flow component
evolves according to  the full half space, unconstrained dynamics. The obtained result on the structure (without damping)  is the best  possible result with respect to both the underlying physics and mathematics of the problem.
}
\end{remark}
\smallskip\par\noindent
{\bf Previous Literature and New Challenges:}
 Nonlinear PDEs with delays  have been considered in various sources (see \cite{wu-1996} and references therein). In  relation to plate equations with delayed  aerodynamical type pressure, we note that \cite{springer} provides a rather complete analysis of the delayed von Karman plate in the presence of rotational terms (and application to flow-plate interactions), and the aforementioned references \cite{oldchueshov1,oldchueshov2} deal with the plate with delay in the presence of the Berger nonlinearity.
In this latter references, it is assumed that the parameter in front of delayed term
is {\it suitably small}.
 The analysis in the more recent reference \cite{springer} also, by and large, applies to the Berger plate
 (in the rotational case).
\par
One should stress at the outset that  the problem is challenging, even in the rotational case. This is due to the fact that the underlying system is intrinsically non-gradient (both the delay term and the term $U\partial_x u$
provide non-conservative and non-dissipative terms that contribute to the loss of gradient structure). In view of this, the existence of attractors requires a priori information on a uniformly absorbing set.
The presence of delay terms along with non-conservative terms makes the latter task challenging \cite{oldchueshov1}.

The references pertaining to plates with delayed terms  primarily  with the rotational case, i.e. the plate equation discussed in Remark \ref{rotational}. In particular, for $\gamma,k_1,k_2>0$, well-posedness of weak solutions is established via the Galerkin method \cite[Sections 3.3.1, 4.1.6]{springer}. These solution are shown to generate a dynamical system the state space $\bH \equiv H_0^2(\Omega) \times H_0^1(\Omega) \times L_2(-t^*,0;H_0^2(\Omega))$. Then, exploiting the compactness of the term $<q^u,u_t>$, i.e. making use of the duality pairing $(H_0^{1}, H^{-1})$, dissipativity of the dynamical system can be shown (in much the same way which we utilize below), followed by asymptotic smoothness. In this case, however, asymptotic smoothness is arrived at in a straightforward way, which additionally exploits the compactness of the von Karman nonlinearity (with respect to the energy identity) in the case where $u_t$ in $H^1_0(\Omega)$.

The  mathematical hurdles arising in the $\gamma=0$ case (where $u_t \in L_2(\Omega)$)  begin at the outset
with well-posedness of weak solutions; indeed, many well-posedness and long-time behavior analysis \cite{webster,websterlasiecka,supersonic} are dramatically complicated when $\gamma=0$.
Thus, it is no wonder that,  to date, the non-rotational von Karman plate with delayed terms  has not been considered. The reasons for this are clear:
(i) the methods of studying long-time behavior via an approach making use of the combination of
 Theorems \ref{dissmooth} and \ref{psi} (stated below) is relatively recent. In particular, dealing with the von Karman nonlinearity is especially difficult outside of the use of Theorem \ref{psi}. (ii) In addition, the initial studies
 \cite{Chu92b}
 of delayed von Karman plates took place before results on the sharp regularity of the Airy stress function were available. These results are critical in this treatment. (iii) Lastly, it is clear by inspection that  additional properties  of the delayed  force in (\ref{plate}) must be accounted for  when the rotational inertia term is absent. However, owing to a gap in well-posedness results for flow-plate interactions, it was unclear what these  properties - translated into abstract assumptions -
should be. A recent observation about the `hidden compactness' of the reduced delayed potential (derived from an inviscid potential flow) yielded insight into what assumptions are reasonable in line with previous analysis of von Karman plates with delay.

In proving finite-dimensionality and smoothness of the attractors, the criticality of the nonlinearity and the lack of gradient structure prevents one from using
a powerful technique of {\it backward smoothness of trajectories}  \cite{Babin-Vishik,kh,springer}, where smoothness is propagated forward from the equilibria.  Since the attractor may have complicated structure, the structure of the attractor is not characterized by the equilibria points. In order to cope with this issue, we take the advantage
of novel method that is based on density and exploits only the compactness of the attractor.

\section{Proof of Main Result}\label{Sect4}
We first  {\bf outline the steps} utilized to obtain the main result stated above.
\begin{itemize}
\item
We begin with recalling the key results on dissipative long-time
dynamics for non-gradient systems.
\item We then
 recall (in the form adapted  to the delay case)
 cite the primary estimates which have been used in previous long-time behavior considerations for von Karman plates in the past. Noting that we cannot make use of the $\gamma>0$ approach to long-time behavior (owing to the loss of compactness of the term $\ds <q^u(t),u_t(t)>$). We begin by exploiting a different assumption (`hidden' compactness of this term when integrated in $t$ in the energy relation (\ref{energyrelation})).
\item We then use a similar, modified Lyapunov functional as that in \cite[p.480]{springer} on the dynamical system to show that it is dissipative.
\item After obtaining the necessary compactness estimates, we synthesize them to produce a pointwise energy estimate which allows us to make use the abstract Theorem \ref{psi} to obtain asymptotic smoothness of the dynamical system associated with weak solutions to (\ref{plate}).
 At this point, we make use of abstract Theorem \ref{dissmooth} to conclude that the dynamical system possesses a compact attractor in the space $\bH$.
\item In the last step, we revisit our estimation in the asymptotic smoothness section to obtain the so-called quasistability estimate on the attractor utilizing its compactness; this allows us to apply Theorem \ref{t:FD}. The application of this theorem gives the finite dimensionality and additional smoothness of the attractor.
\end{itemize}

\subsection{Preliminaries on Dissipative  Dynamical Systems}
We recall notions and results from the theory of dynamical systems
 (see, e.g., \cite{Babin-Vishik,ch-0,lad,temam}).
\par
One says that a dynamical system
$(S_t,\bH)$ is \textit{asymptotically smooth} if for any
bounded, forward invariant  set $D$ there exists a compact set $K
\subset \overline{D}$ such that
\begin{equation*} % \label{dist}
\lim_{t\to+\infty}d_{\bH}\{S_t D|K\}=0
\end{equation*}
 holds. An asymptotically smooth
dynamical system should be thought of as one which possesses \textit{local
attractors}, i.e. for a given forward invariant set $B_R$ of diameter $R$ in the space $
\bH$ there exists a compact attracting set in the closure of $B_R$,
however, this set need not be uniform with respect to $R$.
\par
A closed set $B \subset \bH$ is said to be \textit{absorbing} for $(S_t,\bH)$ if for any bounded set $D \subset \bH$ there exists a $t_0(D)$ such that $S_tD \subset B$ for all $t > t_0$. If the dynamical system $(S_t,\bH)$ has a bounded absorbing set it is said to be \textit{dissipative}.
\par
In the context of this paper we will use a few keys theorems (which we now
formally state) to prove the existence of a finite dimensional global attractor.  First, we address
the existence of attractors and characterize the attracting set:

\begin{theorem}
\label{dissmooth} Any dissipative and asymptotically smooth dynamical system $(S_t,\bH)$ in a Banach space $\bH$ possesses a unique compact global attractor $\textbf{A}$. This attractor is a connected set and can be described as a set of all bounded full trajectories.
\end{theorem}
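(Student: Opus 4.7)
The plan is to construct $\mathbf{A}$ as the $\omega$-limit set of a bounded absorbing ball and then verify the defining properties of a global attractor separately. By dissipativity I fix a closed ball $B \subset \bH$ that is absorbing; by enlarging the radius if necessary and waiting a finite transit time I may further assume that $B$ is forward invariant, that is $S_t B \subset B$ for all $t \geq 0$. I then set
\[
\mathbf{A} := \omega(B) = \bigcap_{s \geq 0} \overline{\bigcup_{t \geq s} S_t B}.
\]

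Asymptotic smoothness applied to the forward-invariant set $B$ furnishes a compact $K \subset \overline{B}$ with $d_{\bH}(S_t B, K) \to 0$, which immediately yields $\mathbf{A} \subset K$ and hence the compactness of $\mathbf{A}$; non-emptiness follows by extracting a convergent subsequence from any sequence $S_{t_n} b_n$ with $b_n \in B$ and $t_n \to +\infty$. Attraction of an arbitrary bounded $D \subset \bH$ reduces, via dissipativity, to attraction of $B$: if along some sequence $t_n \to \infty$ one had $d_{\bH}(S_{t_n} B, \mathbf{A}) \geq \epsilon$, I would extract $S_{t_n} b_n \to y$ (using asymptotic compactness) and then $y \in \bigcap_s \overline{\bigcup_{t \geq s} S_t B} = \mathbf{A}$, a contradiction. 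Invariance $S_t \mathbf{A} = \mathbf{A}$ follows in the forward direction from continuity of $S_t$ together with the sequential description of $\mathbf{A}$, and in the reverse direction from the precompactness of $\{S_{t_n - t} b_n\}$ inside $K$ via a diagonal extraction. Uniqueness is automatic: any other compact global attractor $\mathbf{A}'$ must attract the compact invariant set $\mathbf{A}$, forcing $\mathbf{A} \subset \mathbf{A}'$, and the symmetric argument gives the reverse inclusion.

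For connectedness, since $B$ is a ball each $S_t B$ is connected (continuous image of a connected set), and therefore each tail $\overline{\bigcup_{t \geq s} S_t B}$ is connected; $\mathbf{A}$ is then a nested intersection of compact connected sets in a Hausdorff space, hence connected. For the characterization as the set of bounded full trajectories, any bounded full trajectory $\{y(t)\}_{t \in \R}$ satisfies $y(0) = S_t y(-t)$ with the orbit $\{y(-t)\}_{t \geq 0}$ eventually inside $B$, so $y(0) \in \omega(B) = \mathbf{A}$; conversely, for $y_0 \in \mathbf{A}$ the invariance $\mathbf{A} = S_n \mathbf{A}$ supplies preimages $y_{-n} \in \mathbf{A}$ with $S_n y_{-n} = y_0$, and a Cantor-type diagonal selection together with continuity of $S_t$ produces a full trajectory through $y_0$ lying entirely in the compact set $\mathbf{A}$. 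The main technical subtlety in the whole scheme is precisely this last construction: $S_t$ is generally not injective on $\bH$, so backward trajectories are not unique, but the compactness of $\mathbf{A}$ and the surjectivity built into the invariance relation $\mathbf{A} = S_t \mathbf{A}$ supply the required selection.
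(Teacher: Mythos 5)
Your construction is the standard one: the paper itself gives no proof of Theorem \ref{dissmooth} and simply refers to \cite{Babin-Vishik} and \cite{temam}, and your argument --- taking $\mathbf{A}=\omega(B)$ for a forward-invariant absorbing set $B$, using the compact attracting set from asymptotic smoothness to get compactness and asymptotic compactness of orbits, and then verifying attraction, two-sided invariance, uniqueness, and the bounded-full-trajectory characterization --- is exactly the proof found in those references. Those parts are correct; in particular the backward-selection for full trajectories is fine (indeed, once $\mathbf{A}=S_1\mathbf{A}$ gives preimages $y_{-n}$ with $S_1 y_{-(n+1)}=y_{-n}$, the semigroup property makes the definition $y(t)=S_{t+n}y_{-n}$ consistent without any diagonal extraction).

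The one step that does not go through as written is connectedness. You intersect the tails $\overline{\bigcup_{t\geq s}S_tB}$ and invoke the fact that a nested intersection of \emph{compact} connected sets is connected; but these tails contain (for small $s$) essentially all of $B$, so in the infinite-dimensional space $\bH$ they are closed and bounded but \emph{not} compact, and a decreasing intersection of merely closed connected sets can be disconnected. The standard repair uses attraction rather than the nested-intersection lemma: if $\mathbf{A}=A_1\cup A_2$ with $A_1,A_2$ compact, nonempty and separated by disjoint open sets $U_1,U_2$, then since $\mathbf{A}$ attracts $B$ one has $S_tB\subset U_1\cup U_2$ for all large $t$; the set $\bigcup_{t\geq s}S_tB$ is connected (as you correctly argue via orbit segments chaining the slices $S_tB$), hence lies entirely in one $U_i$, forcing $\omega(B)\subset\overline{U_i}$ and contradicting nonemptiness of the other piece. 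With that substitution your proof is complete.
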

For the proof, see  \cite{Babin-Vishik} or \cite{temam}.
\par

Secondly, we state a useful criterion (inspired by \cite{kh} and proven in \cite{ch-l}, see also
\cite[Chapter 7]{springer}) which reduces
asymptotic smoothness to finding a suitable functional on the state space
with a compensated compactness condition:
\begin{theorem} \label{psi}
Let $(S_t,\bH)$ be a dynamical system. Assume that for any bounded positively
invariant set $B \subset \bH$ and for all $\epsilon>0$ there exists
a $T\equiv T_{\epsilon,B}$ such that
\begin{equation*}
||S_Tx_1 - S_Tx_2||_{\bH} \le
\epsilon+\Psi_{\epsilon,B,T}(x_1,x_2),~~x_i \in B
\end{equation*}
with $\Psi$ a functional defined on $B \times B$ depending on $\epsilon, T,$
and $B$ such that
\begin{equation*}
\liminf_m \liminf_n \Psi_{\epsilon,T,B}(x_m,x_n) = 0
\end{equation*}
for every sequence $\{x_n\}\subset B$. Then $(S_t,\bH)$ is an
asymptotically smooth dynamical system.
\end{theorem}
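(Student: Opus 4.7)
The plan is to invoke the Kuratowski measure of noncompactness $\alpha(\cdot)$ and use the standard equivalence that $(S_t,\bH)$ is asymptotically smooth if and only if $\alpha(S_tB)\to 0$ as $t\to+\infty$ for every bounded positively invariant set $B\subset\bH$. So I would fix such a $B$ and $\epsilon>0$, let $T=T_{\epsilon,B}$ and $\Psi=\Psi_{\epsilon,B,T}$ be those furnished by the hypothesis, bound $\alpha(S_TB)\le 2\epsilon$, and then propagate this bound to all $t\ge T$ by positive invariance.

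The key step is to show that every sequence $\{y_n\}\subset S_TB$, written as $y_n=S_Tx_n$ with $x_n\in B$, admits an asymptotically $\epsilon$-Cauchy subsequence. This is achieved by a nested diagonal extraction exploiting the hypothesis $\liminf_m\liminf_n\Psi(x_m,x_n)=0$: one iteratively selects base points $x_{p_1}, x_{p_2},\ldots$ and passes at each stage to a further subsequence in which $\Psi(x_{p_k},\,\cdot\,)<1/k$, arriving at a subsequence satisfying $\Psi(x_{p_i},x_{p_j})<1/i$ whenever $i<j$. Combined with the hypothesized bound $\|S_Tx_1-S_Tx_2\|\le\epsilon+\Psi(x_1,x_2)$ and the symmetry of the norm difference, this yields
\[
\limsup_{i,j\to\infty}\|y_{p_i}-y_{p_j}\|_{\bH}\le\epsilon.
\]
A standard covering argument then upgrades this to a bound on $\alpha$: if the ball measure of noncompactness $\beta(S_TB)$ exceeded $\epsilon$, one could extract an $\epsilon$-separated infinite sequence in $S_TB$, violating the almost-Cauchy property just established. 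Hence $\beta(S_TB)\le\epsilon$ and $\alpha(S_TB)\le 2\beta(S_TB)\le 2\epsilon$.

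To extend the bound to later times, I would use positive invariance: for $t\ge T$ one has $S_{t-T}B\subseteq B$, so by monotonicity of $S_T$ on sets, $S_tB=S_T(S_{t-T}B)\subseteq S_TB$, and therefore $\alpha(S_tB)\le\alpha(S_TB)\le 2\epsilon$. Since $\epsilon>0$ was arbitrary, $\alpha(S_tB)\to 0$ as $t\to+\infty$, which gives asymptotic smoothness; the omega-limit set $\omega(B)\subseteq\overline{B}$ then serves as the required compact attracting set.

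The delicate point is the nested diagonal extraction producing the one-sided bound $\Psi(x_{p_i},x_{p_j})<1/i$ for $i<j$ from the asymmetric iterated-liminf hypothesis: since $\Psi$ need not be symmetric, a single-pass diagonal does not suffice, and one must exhaust one indexing direction at each stage while maintaining the smallness along the other. Once this extraction is carried out, the remainder is routine: the relationship between sequential almost-Cauchy conditions and the $\beta$/$\alpha$ measures of noncompactness, the monotonicity of $\alpha$ under $S_T$, and the standard construction of $\omega(B)$ from $\alpha(S_tB)\to 0$ are all classical.
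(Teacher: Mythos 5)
Your argument is correct, and it is essentially the standard proof of this criterion: the paper itself states Theorem \ref{psi} without proof, citing \cite{ch-l} and \cite[Chapter 7]{springer}, where the argument proceeds exactly as you describe — via the Kuratowski characterization of asymptotic smoothness, a nested diagonal extraction from the iterated-liminf hypothesis yielding a subsequence with $\limsup_{i,j}\|S_Tx_{p_i}-S_Tx_{p_j}\|_{\bH}\le\epsilon$, the separation argument bounding the measure of noncompactness of $S_TB$, and propagation to $t\ge T$ by positive invariance. You correctly identify and resolve the one genuinely delicate point, namely that the asymmetry of $\Psi$ forces the stagewise exhaustion rather than a single-pass diagonal.
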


In order to establish both smoothness of the attractor and finite
dimensionality, a stronger estimate on the difference of two flows is needed. We now cite \cite[pp. 381-387]{springer}
and also \cite{cl-hcte}. Note that we have used a specialization of the cited theorem which utilizes the special structure of the state space in the problem at hand: $\bH = H_0^2(\Omega) \times L_2(\Omega)\times L_2(-t^*,0;H_0^2(\Omega))$); the theorems cited above are more general:

\begin{theorem}\label{t:FD}
 Let $\mathbf{A}$
be a global attractor for $(S_t,\bH)$.

$x_{1},x_{2}\in B\subset \bH$ where $B$ is a
forward invariant set for the flows $S_{t}x_{i} $.
%=(u^{i}(t);u_{t}^{i}(t))$,
Assume that the following inequality holds for all $t>0$ with positive
constants $C_{1},C_{2},\omega$
\begin{equation}
||S_{t}x_{1}-S_{t}x_{2}||_{\bH}^{2}\leq C_{1}e^{-\omega
_{B}t}||x_{1}-x_{2}||_{\bH}^{2}+C_{2}\max_{\tau \in \lbrack
0,t]}|| u_1(\tau)-u_2 (\tau) ||_{H_1}^{2}  \label{quasi}
\end{equation}
for any $x_1,x_2\in \mathbf{A}$,
where $S_tx_i=(u_i(t), \partial_t u_i(t), u_i^t)$
and
$H^2_0(\Om)\subset H_1 \subset L_2(\Om)$ is compactly embedded. Then the
attractor $\mathbf{A}$  posesses the
following properties: \newline
(a) The fractal dimension of $\mathbf{A}$ is finite. \newline
(b) For any $x \in \mathbf{A} $ one has $\partial_t\big( S_t x \big)\in
L_{\infty}(\reals, \bH) $.
% $\mathbf{A}\subset H^{4}(\Omega )\times H^{2}(\Omega )$ and $\mathcal{A}$
%is bounded with respect to the topology $H^{4}\times H^{2}$.
\end{theorem}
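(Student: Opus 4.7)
The plan is to read the quasistability inequality \eqref{quasi} as a quasi-contraction on $\mathbf{A}$ modulo a compact seminorm, which places us in the classical Ladyzhenskaya/Chueshov-Lasiecka framework (see \cite[Ch.~7]{springer}). Finite fractal dimension in (a) then follows from an iterated covering argument, while the time regularity in (b) is obtained by applying \eqref{quasi} along the orbit shift $x\mapsto S_h x$ and bootstrapping the resulting self-bounding inequality.

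For (a), fix $t_*>0$ so that $q:=\sqrt{C_1}\,e^{-\omega_B t_*/2}<1$ and set $V=S_{t_*}\big|_{\mathbf{A}}$. Taking square roots in \eqref{quasi} gives, for all $x_1,x_2\in\mathbf{A}$,
\begin{equation*}
\|Vx_1-Vx_2\|_{\bH}\leq q\,\|x_1-x_2\|_{\bH}+n(x_1,x_2),\qquad n(x_1,x_2):=\sqrt{C_2}\,\max_{\tau\in[0,t_*]}\|u_1(\tau)-u_2(\tau)\|_{H_1}.
\end{equation*}
The seminorm $n$ is compact on $\mathbf{A}$: since $\mathbf{A}$ is compact in $\bH$ and $S_\tau$ is strongly continuous, the family $\{\tau\mapsto u(\tau): x\in\mathbf{A}\}$ is equicontinuous into $H^2_0(\Om)$; combining with the compact embedding $H^2_0\hookrightarrow H_1$ and Arzelà-Ascoli, the trajectory map $\mathbf{A}\ni x\mapsto\{u(\tau)\}_{\tau\in[0,t_*]}$ has precompact image in $C([0,t_*];H_1)$. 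Using invariance $V(\mathbf{A})=\mathbf{A}$, the standard covering lemma bounds the minimum number of $\epsilon$-balls needed to cover $\mathbf{A}=V(\mathbf{A})$ by $q^{-N}$ times the covering number of a compact subset of $C([0,t_*];H_1)$ by $\epsilon q^N$-balls; taking logarithms and optimizing in $N$ yields $\dim_f\mathbf{A}<\infty$.

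For (b), fix $x\in\mathbf{A}$ and $h>0$. By invariance $S_hx\in\mathbf{A}$, so \eqref{quasi} applied to the pair $(S_hx,x)$ gives
\begin{equation*}
\|S_{t+h}x-S_tx\|_{\bH}^2\leq C_1 e^{-\omega_B t}\|S_hx-x\|_{\bH}^2+C_2\max_{\tau\in[0,t]}\|u(\tau+h)-u(\tau)\|_{H_1}^2.
\end{equation*}
Choose $t=t_*$ with $C_1e^{-\omega_B t_*}\leq 1/2$ and take $\sup_{x\in\mathbf{A}}$. Since $S_{t_*}x$ ranges over $\mathbf{A}$ as $x$ does, writing $\alpha(h):=\sup_{x\in\mathbf{A}}\|S_hx-x\|_{\bH}$ and $\beta(h):=\sup_{x\in\mathbf{A}}\max_{\tau\in[0,t_*]}\|u(\tau+h)-u(\tau)\|_{H_1}$ produces the self-bounding estimate $\alpha(h)^2\leq \tfrac{1}{2}\alpha(h)^2+C_2\beta(h)^2$, whence $\alpha(h)\leq\sqrt{2C_2}\,\beta(h)$. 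Boundedness of $\mathbf{A}$ in $\bH$ yields $u_t\in L_\infty(\mathbb{R};L_2(\Om))$ and $u\in L_\infty(\mathbb{R};H^2_0(\Om))$ along any full trajectory, so interpolating $L_2\subset H_1\subset H^2_0$ gives $\beta(h)=O(h^\theta)$ for some $\theta\in(0,1)$, hence $\alpha(h)=O(h^\theta)$; iterating the self-bounding inequality (each iteration improves the interpolation exponent for $\beta$) converges to the Lipschitz bound $\alpha(h)\leq Ch$. Dividing by $h$ and extracting a weak-$*$ limit in $L_\infty(\mathbb{R};\bH)$ identifies the limit with $\partial_t(S_tx)$, yielding $\partial_t(S_tx)\in L_\infty(\mathbb{R};\bH)$.

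The main obstacle is the Hölder-to-Lipschitz upgrade for $\beta(h)$ through the intermediate space $H_1$: the a priori time regularity of trajectories on $\mathbf{A}$ is only $u_t\in L_\infty(\mathbb{R};L_2)$, which by direct interpolation gives only Hölder continuity in $H_1$. Closing this gap requires either iterating the self-bounding inequality until the exponent converges to $1$, or applying \eqref{quasi} directly to second-order-difference pairs such as $(x,S_hx)$ and $(S_hx,S_{2h}x)$ and combining them to extract the Lipschitz bound in one step. Once the Lipschitz bound for $\alpha(h)$ is in place, no further difficulty arises in (b); finiteness of fractal dimension obtained in (a) plays no role in (b), but the two conclusions together are exactly what is needed to apply Theorem~\ref{t:FD} in the main text.
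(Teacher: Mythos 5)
The paper does not prove Theorem~\ref{t:FD} itself; it cites it as a specialization of the quasi-stability results in \cite[pp. 381--387]{springer} and \cite{cl-hcte}, and your reconstruction follows essentially that standard route. Your part (a) --- fixing $t_*$ so that $q=\sqrt{C_1}e^{-\omega_B t_*/2}<1$, checking that the seminorm $n(x_1,x_2)=\sqrt{C_2}\max_{[0,t_*]}\|u_1-u_2\|_{H_1}$ is compact on $\mathbf{A}$ (Arzel\`a--Ascoli in $C([0,t_*];H_1)$, using that trajectories on $\mathbf{A}$ are bounded in $H_0^2(\Om)$ and Lipschitz into $L_2(\Om)$, together with the compact embedding), and then running the Ladyzhenskaya-type covering refinement --- is exactly the cited argument.

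In part (b) the ``main obstacle'' you identify is not actually there, and the way you propose to close it is the one step that is not justified by the hypotheses as stated: the theorem assumes only the compact embedding $H^2_0(\Om)\subset H_1\subset L_2(\Om)$, so the multiplicative interpolation inequality underlying your H\"older-exponent bootstrap for $\beta(h)$ is not available for an abstract $H_1$. What the compactness hypothesis does give is Ehrling's inequality $\|v\|_{H_1}\le \epsilon\,\|v\|_{2}+C_\epsilon\|v\|$ for every $\epsilon>0$, and this is what the cited proof uses. Plugging it into your own self-bounding estimate finishes (b) in one step: on $\mathbf{A}$ one has $\|u(\tau+h)-u(\tau)\|\le h\sup\|u_t\|\le Rh$ and, by invariance, $\sup_{\tau}\|u(\tau+h)-u(\tau)\|_{2}\le \alpha(h)$, so $\beta(h)\le \epsilon\,\alpha(h)+C_\epsilon Rh$ and hence $\alpha(h)\le \sqrt{2C_2}\,\epsilon\,\alpha(h)+\sqrt{2C_2}\,C_\epsilon Rh$; choosing $\epsilon$ with $\sqrt{2C_2}\,\epsilon\le 1/2$ gives the Lipschitz bound $\alpha(h)\le Ch$ directly, with no iteration and no H\"older stage. (Your iteration does converge, with uniformly bounded constants, when $H_1$ is a genuine interpolation space --- as in the application, where $H_1=H^{2-\eta}(\Om)$ --- but it is both unnecessary and not licensed by the abstract hypotheses.) The remainder of your argument for (b), dividing by $h$ and extracting a weak-$*$ limit to identify $\partial_t(S_tx)\in L_\infty(\R;\bH)$, is fine and also yields the uniform bounds on $\|u_t(t)\|_2$ and $\|u_{tt}(t)\|$ that the paper subsequently feeds into elliptic regularity.
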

The estimate in (\ref{quasi}) is often referred to (in practice) as a ``quasistability"
estimate. It reflects the fact that the flow can be stabilized exponentially
to a compact set. Alternatively, we might say that the flow is exponentially
stable, modulo a compact perturbation (lower order terms). The quadratic nature of the lower order terms is important for the validity of
Theorem \ref{t:FD}.

\subsection{Technical Preliminaries}
In this section we derive and cite certain energy and multiplier estimates, as well as estimates on the von Karman nonlinearity, which will be necessary in the proof of Theorem \ref{maintheorem} below.
\smallskip\par
The following theorem is a case specialization found in \cite[Section 1.4, pp.38-45; Section 9.4, pp.496-497]{springer}. These bounds elucidate the local Lipschitz (quasi-Lipschitz) character of the von Karman nonlinearity are relatively recent and critical to our nonlinear analysis.
\begin{theorem}\label{nonest}
Let $u^i \in \mathscr{B}_R(H^2_0(\Omega))$, $i=1,2$, and $z=u^1-u^2$.
 Then for $f(u) = - [u,v(u)+F_0]$ we have
\begin{equation}\label{f-est-lip}
||f(u^1)-f(u^2)||_{-\delta}  \le C_{\delta}\big(1+||u^1||_2^2+||u^2||_2^2\big)||z||_{2-\delta} \le C(\delta,R)||z||_{2-\delta}~~~ for~ all ~~\delta \in [0,1].
\end{equation}
If we further assume that $u^i \in C(s,t;H^2(\Omega))\cap C^1(s,t;L_2(\Omega))$, then we have  that
\begin{equation*}%\label{4.9}
- <f(u^1)-f(u^2),z_{t}> =\dfrac{1}{4}\frac{d}{dt}Q(z)+\frac{1}{2}%
P(z)
\end{equation*}%
where
\begin{equation*}
Q(z)=<v(u^1)+v(u^2),[z,z]> -||\Delta v(u^1+u^2,z)||^2
\end{equation*}%
and
\begin{equation}\label{4.9aa}
P(z)=-<u^1_{t},[u^1,v(z)]> -<u^2_{t},[u^2,v(z)]> -<
u^1_{t}+u^2_{t},[z,v(u^1+u^2,z)]> .
\end{equation}
Moreover,
\begin{align}\label{eq4.5}
\Big|\int_s^t <f(u^1(\tau))-f(u^2(\tau)),z_t(\tau)>d\tau\Big| \le &~C(R)\sup_{\tau \in [s,t]} ||z||^2_{2-\eta}+\frac12 \Big|\int_s^tP(z)d\tau\Big|
\end{align} for some $0<\eta<1/2$ provided
 $u^i(\tau) \in \mathscr{B}_R(H^2_0(\Omega))$ for all $\tau\in [s,t].$
\end{theorem}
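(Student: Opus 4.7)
The theorem bundles three distinct claims: the Lipschitz-type bound (\ref{f-est-lip}), the pointwise identity for $-\langle f(u^1)-f(u^2), z_t\rangle$, and the integrated estimate (\ref{eq4.5}). My plan is to handle them in that order, exploiting the bilinearity of the Airy solution map $(u,w)\mapsto v(u,w)$ and the sharp regularity provided by Lemma \ref{l:airy-1}.

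For (\ref{f-est-lip}), I first use bilinearity to write
\begin{equation*}
f(u^1)-f(u^2) \;=\; -[z, v(u^1)+F_0] \;-\; [u^2, v(u^1+u^2, z)],
\end{equation*}
since $v(u^1)-v(u^2)=v(u^1+u^2,z)$. For the first bracket, Lemma \ref{l:airy-1} gives $\|v(u^1)\|_{W^{2,\infty}}\le C\|u^1\|_2^2$ and the assumption $F_0\in H^{3+\delta}$ puts $F_0\in W^{2,\infty}$, so $[z,v(u^1)+F_0]$ is a second-order operator on $z$ with bounded coefficients; testing against $\psi\in H^{\delta}_0(\Omega)$ and integrating by parts twice moves one or both derivatives off $z$, yielding the $H^{-\delta}$ bound with $\|z\|_{2-\delta}$ on the right. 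For the second bracket, I invoke the off-diagonal version of Lemma \ref{l:airy-1}, $\|v(u^1+u^2,z)\|_{W^{2,\infty}}\le C\|u^1+u^2\|_2\|z\|_2$, combined with a standard interpolation argument in the duality pairing.

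For the pointwise identity, I expand $-\langle f(u^1)-f(u^2), z_t\rangle$ using the decomposition above, and then apply the symmetries of the von Karman bracket under clamped data, namely $\langle \phi,[u,w]\rangle=\langle u,[\phi,w]\rangle=\langle w,[\phi,u]\rangle$. Together with $\Delta^2 v(u,w)=-[u,w]$ and the vanishing of $v(u,w)$ and $\partial_\nu v(u,w)$ on $\partial\Omega$, this rearranges the inner products so that the time derivative of $z$ gets redirected onto the Airy functions; the terms of pure time-derivative type assemble into $\tfrac14\tfrac{d}{dt}Q(z)$ while the remaining commutator-type pieces, in which $u^1_t$ or $u^2_t$ remain, collect into the expression $\tfrac12 P(z)$ of (\ref{4.9aa}).

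The main obstacle is the integrated estimate (\ref{eq4.5}), which after integrating the identity reduces to the bound $|Q(z)|\le C(R)\|z\|_{2-\eta}^2$ for some $\eta>0$. A naive application of $\|v(u^i)\|_{W^{2,\infty}}$ against $[z,z]$ only yields $\|z\|_2^2$, losing the needed compactness margin. To gain $\eta>0$ I would use the sharp regularity of the Airy function: for $u,w\in H^2_0(\Omega)$ one actually has $v(u,w)\in H^{2+\epsilon}(\Omega)$ with a quantitative estimate (a result beyond the bare $W^{2,\infty}$ statement of Lemma \ref{l:airy-1}, proved via biharmonic regularity on the clamped domain). Rewriting $\langle v(u^1)+v(u^2),[z,z]\rangle$ as $\langle[v(u^1)+v(u^2),z],z\rangle$ via the bracket symmetries and exploiting that this commutator picks up an extra fraction of a derivative on the Airy factor lets one move $\eta$ derivatives from one copy of $z$ to the Airy side; similarly, $\|\Delta v(u^1+u^2,z)\|^2$ is handled by interpreting it through its defining equation and absorbing an $\eta$-loss into the higher regularity of $v$. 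This sharp-regularity step is the heart of the argument, and the only place where non-trivial biharmonic analysis on the clamped domain is indispensable.
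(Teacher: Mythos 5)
Your proposal is essentially correct and follows the route the paper itself points to (the paper does not prove Theorem \ref{nonest}; it cites \cite{springer} and indicates only the decomposition $f(u^1)-f(u^2)=-[z,v(u^1)+F_0]-[u^2,v(u^1+u^2,z)]$ and the sharp Airy regularity of Lemma \ref{l:airy-1}, both of which you reproduce). You also correctly identify the crux of \eqref{eq4.5}: after integrating the identity, everything reduces to $|Q(z)|\le C(R)\|z\|^2_{2-\eta}$. The one place where your route differs from the standard one is how you propose to obtain that compactness margin. You lean on extra Sobolev regularity $v(u,w)\in H^{2+\epsilon}$ and on shifting derivatives onto the Airy factor; this can be made to work, but it is not where the gain actually comes from, and you would still need an unstated bilinear estimate on $[v,z]$ in a negative norm of order $-2+\eta$. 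The cleaner mechanism — and the one the paper itself invokes later for exactly this kind of term (estimate \eqref{kar2}, derived from $\|[u,w]\|_{-2}\le C\|u\|_{2-\beta}\|w\|_{1+\beta}$, $\beta\in[0,1)$) — bounds the bracket, not the Airy function: $|\langle v(u^1)+v(u^2),[z,z]\rangle|\le \|v(u^1)+v(u^2)\|_2\,\|[z,z]\|_{-2}\le C(R)\|z\|_{2-\beta}\|z\|_{1+\beta}$, which with $\beta=1/2$ balances at $C(R)\|z\|^2_{3/2}$ (this is precisely why $\eta<1/2$ appears in the statement), and $\|\Delta v(u^1+u^2,z)\|^2\le C\|[u^1+u^2,z]\|_{-2}^2\le C(R)\|z\|^2_{2-\eta}$ by the same estimate with $\beta$ near $0$. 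So no biharmonic regularity beyond $\|v\|_2\simeq\|\Delta v\|$ is needed for $Q(z)$; the sharp $W^{2,\infty}$ regularity is what carries the Lipschitz bound \eqref{f-est-lip} and the $P(z)$ terms. With that substitution your outline is a complete and faithful reconstruction.
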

The above bounds rely on the equation
\[
f(u^1)-f(u^2) = [z,v(u)+F_0]+[u^2,v(u^1)-v(u^2)]
\]
and on the so-called `sharp' regularity of the Airy stress function $v(u)$
(see Lemma~\ref{l:airy-1}).
\smallskip\par

We will now make use of the above estimates in producing energy type estimates.

 First, we multiply (\ref{plate}) by $u$ and integrate over the set $\Omega \times (s,t)$, making use of clamped boundary conditions. This produces the following identity:
\begin{align*}
<u_t(\tau), u(\tau)>\Big|_{s}^t+ &
\int_s^t\Big( ||\Delta u(\tau)||^2  -  ||u_t(\tau)||^2 \Big)d\tau
\\ & =
-\int_s^t\Big(<f(u(\tau)),u(\tau)> + <q^u(\tau)+p_0-Lu,u(\tau)>\Big)d\tau,
\end{align*}
where as above we use the notation $q^u(\tau)=q(u^\tau,\tau)$.
By standard splitting and interpolation, we arrive at
\begin{align*}
\int_s^t\Big( ||\Delta u(\tau)||^2  -  ||u_t(\tau)||^2 \Big)d\tau  \le &~ \epsilon \int_s^t ||u||_2^2d\tau+
\e\|p+0\|^2 -\int_s^t <f(u(\tau)),u(\tau)>d\tau\\&+C\int_s^t ||q^u(\tau)||^2_{-\s}d\tau + C_\epsilon(1+|t-s|)\sup_{\tau \in [s,t]} ||u(\tau)||_{2-\eta}^2
\\ & + |<u_t(t), u(t)>| +|<u_t(s), u(s)>|
\end{align*} for all $\e>0$ and for some $\eta > 0$.
This estimate, coupled with the estimates in (\ref{assumpt1}) and (\ref{f-est-lip}) yield the following estimates:
\begin{lemma}
Let $u^i \in C(0,T;H_0^2(\Omega))\cap C^1(0,T;L_2(\Omega)) \cap L_2(-t^*,T;H_0^2(\Omega))$ solve (\ref{plate}) with clamped boundary conditions and appropriate initial conditions on $[0,T]$ for $i=1,2$. Then the following estimate holds for all $\e >0$, for some $\eta > 0$, and $0 \le t \le T$:
\begin{align*}
\int_0^t \big(||\Delta u||^2-||u_t||^2\big)d\tau \le&~\epsilon \int_0^t||u||^2_2 d\tau+\e + C\int_{-t^*}^0 ||u(\tau)||_2^2 d\tau + C(\epsilon,t^*,T)\sup_{\tau \in [0,t]} ||u(\tau)||^2_{2-\eta}\\&-\int_0^t<f(u),u> d\tau + |<u_t(t), u(t)>| +|<u_t(s), u(s)>|.
\nonumber
\end{align*}
Moreover, in the case where we are considering the difference $z=u^1-u^2$ of solutions solving (\ref{difference}) with $u^i(t) \in \mathscr B_R(H^2(\Omega))$ for all $t\in [0,T]$, we may utilize the estimates in Theorem~\ref{nonest} (which eliminates the stand-alone $\e$) arrive at
\begin{align}\label{zmult}
\int_s^t \big(||\Delta z||^2-||z_t||^2\big)d\tau
\le &~\epsilon \int_s^t ||z||_2^2 d\tau +C\int_{s-t^*}^t ||z(\tau)||_{2-\s}^2 d\tau + C(\epsilon,T,R)\sup_{\tau \in [0,t]} ||z(\tau)||^2_{2-\eta} \notag \\
&+ E_z(t)+E_z(s),
\end{align}
where $E_z(t)$ is given by \eqref{Ez}, i.e.,
$E_z(t) \equiv \dfrac{1}{2}\big\{||\Delta z(t)||^2 + ||z_t(t)||^2\big\}$.
\end{lemma}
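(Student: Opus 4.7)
The plan is to carry out the standard equipartition-of-energy multiplier, testing the plate equation (\ref{plate}) with $u$ for the first estimate and the linear difference equation (\ref{difference}) with $z=u^1-u^2$ for (\ref{zmult}). After integrating over $\Omega\times(s,t)$ and moving one time derivative off the inertia via $\int_s^t\langle u_{tt},u\rangle d\tau=\langle u_t,u\rangle\big|_s^t-\int_s^t\|u_t\|^2 d\tau$, the characteristic expression $\int_s^t(\|\Delta u\|^2-\|u_t\|^2)d\tau$ appears on the left-hand side (modulo the boundary pairings $\langle u_t(t),u(t)\rangle$ and $\langle u_t(s),u(s)\rangle$, which are kept as such by Cauchy--Schwarz), while the forcing terms are relegated to the right-hand side for absorption.

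For the single-trajectory estimate, I would bound each forcing contribution against the targeted small/compact quantities. The delay term $\int_s^t\langle q^u,u\rangle d\tau$ is treated through the $(H^{-\sigma},H^\sigma_0)$ duality: using (\ref{assumpt1}) from Assumption~\ref{as:2}, $|\langle q^u,u\rangle|\le \|q^u\|_{-\sigma}\|u\|_\sigma\le \|q^u\|_{-\sigma}\|u\|_2$, so Young splits off $\epsilon\int_s^t\|u\|_2^2\,d\tau$ and Fubini converts the residual $\int_s^t\|q^u\|_{-\sigma}^2\,d\tau$ into $C t^*\int_{-t^*}^t\|u\|_{2-\sigma}^2\,d\tau$. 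The forcing $\langle p_0,u\rangle$ is Cauchy--Schwarz'd into $\epsilon\|u\|_2^2+C_\epsilon\|p_0\|^2$, accounting for the stand-alone $\epsilon$ on the right. The lower-order term $\langle Lu,u\rangle$ is handled by the continuity $L:H^{2-\delta}\to L_2$ from Assumption~\ref{as:2}, contributing to $C(\epsilon,t^*,T)\sup\|u\|_{2-\eta}^2$, and $\langle f(u),u\rangle$ is kept in the statement, matching the first displayed inequality.

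For (\ref{zmult}), the same multiplier is applied to (\ref{difference}); now the only genuinely nonlinear contribution is $\int_s^t\langle f(u^1)-f(u^2),z\rangle d\tau$. Here I would invoke the sharp locally Lipschitz bound (\ref{f-est-lip}) with $\delta=\eta\in(0,1)$ small: since $\|f(u^1)-f(u^2)\|_{-\eta}\le C(R)\|z\|_{2-\eta}$, duality yields
\[
|\langle f(u^1)-f(u^2),z\rangle|\le C(R)\|z\|_{2-\eta}\|z\|_{\eta}\le C(R)\|z\|_{2-\eta}\|z\|_2,
\]
and a Young split distributes this into $\epsilon\int_s^t\|z\|_2^2\,d\tau+C(\epsilon,R)(t-s)\sup_{[s,t]}\|z\|_{2-\eta}^2$. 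The delayed term $q^z$ and the operator $Lz$ are handled exactly as above, producing the $\int_{s-t^*}^t\|z\|_{2-\sigma}^2 d\tau$ and $\sup\|z\|_{2-\eta}^2$ contributions, respectively. The boundary pairings are absorbed into $E_z(t)+E_z(s)$ via $|\langle z_t,z\rangle|\le \tfrac{1}{2}(\|z_t\|^2+\|z\|^2)$ and $\|z\|\le C\|\Delta z\|$. Crucially, the stand-alone $\epsilon$ of the first estimate disappears because $p_0$ cancels in the difference.

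The principal (and rather benign) obstacle is bookkeeping: one must choose a single exponent $\eta>0$ that simultaneously accommodates the $\sigma$ from Assumption~\ref{as:2}, the $\delta$ from the continuity of $L$, and the exponent appearing in (\ref{f-est-lip}). Taking $\eta=\min\{\sigma,\delta,1/2\}$ suffices. With this choice in hand, all interpolation and Young splittings line up so that the $\epsilon$-pieces merge into $\epsilon\int_s^t\|z\|_2^2\,d\tau$ while every remaining lower-order term collapses into $\sup_{[s,t]}\|z\|_{2-\eta}^2$, yielding (\ref{zmult}).
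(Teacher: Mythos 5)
Your proposal is correct and follows essentially the same route as the paper: multiply the equation by $u$ (resp.\ $z$), integrate by parts in time to expose $\int(\|\Delta u\|^2-\|u_t\|^2)d\tau$, and absorb the forcing terms via the $(H^{-\sigma},H^{\sigma}_0)$ duality for $q$ from \eqref{assumpt1}, the continuity of $L$ on $H^{2-\delta}_0$, and the sharp bracket estimate \eqref{f-est-lip}, with Young/interpolation splittings. The only cosmetic discrepancies (the constant in front of $\|p_0\|^2$ versus the paper's stand-alone $\e$, and the implicit constant when bounding $|\langle z_t,z\rangle|$ by $E_z$) are bookkeeping matters the paper itself glosses over.
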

The final class of estimates we  need are energy estimates for the $z$ term defines as the solution to (\ref{difference}). Energy estimates for single solutions (making use of the nonlinear potential energy) can be derived straightforwardly from (\ref{energyrelation}). The energy estimate on $z$, along with the estimate in (\ref{zmult}) above, will be used in showing asymptotic smoothness for the system.
\par
The energy relation on $[s,t]$ for $z$ in (\ref{difference}) is given by
\begin{align*}%\label{zenergyrelation}
\Ez(t)+k\int_s^t ||z_t||^2 d\tau =& ~\Ez(s) -\int_s^t <f(u^1)-f(u^2),z_t>d\tau +\int_s^t<q^z(\tau),z_t(\tau)> d\tau
\\ \notag
&
-\int_s^t<Lz(\tau),z_t(\tau)>d\tau
\end{align*}
From this, making use of splitting and Sobolev inequalities, we quickly have for $0 \le s < t \le T$, some $\eta>0$, and all $\e >0$:
\begin{align*}%\label{zenergyprelim}
\Ez(t)+k\int_s^t ||z_t||^2 d\tau \le& ~\Ez(s) +C(\epsilon,T)\sup_{\tau \in [s,t]}||z||^2_{2-\eta} +\epsilon\int_s^t\big(||z||_2^2+||z_t||^2\big)d\tau\\\nonumber &-\int_s^t <f(u^1)-f(u^2),z_t>d\tau+\Big|\int_s^t<q^z(\tau),z_t(\tau)> d\tau\Big|
\end{align*}
In the case when $k>0$ by \eqref{zmult}
this implies that
\begin{align*}%\label{zenergyprelim-2}
\frac12 \Ez(t)+c_0\int_s^t E_z d\tau \le& ~\Ez(s) +C(T,R)\sup_{\tau \in [s,t]}||z||^2_{2-\eta}
+C\int_{s-t^*}^t ||z(\tau)||_{2-\s}^2 d\tau
\\\nonumber &-\int_s^t <f(u^1)-f(u^2),z_t>d\tau+\Big|\int_s^t<q^z_t(\tau),z(\tau)> d\tau\Big|
\\ \notag
&+ |<q^z(t),z(t)>|+ |<q^z(s),z(s)>|.
\end{align*}
Here above we usee the integration by parts formula for the integral with delayed
term.
Therefore there exist $a_i>0$ and $C(T,R)>0$ such that
\begin{align*}%\label{zenergyprelim-3}
\Ez(t)+\int_s^t E_z d\tau \le& ~ a_0\left(\Ez(s)+\int_{s-t^*}^s ||z(\tau)||_{2-\s}^2 d\tau\right)
  +C(T,R)\sup_{\tau \in [s,t]}||z||^2_{2-\eta_*}
\\\nonumber &-a_1\int_s^t <f(u^1)-f(u^2),z_t>d\tau.
\end{align*}
Taking $t=T$ and integrating over $s$ in $[0,T]$ we arrive at \footnote{With other constants
$a_i$ and $C(T,R)$.}
\begin{align*}%\label{zenergyprelim-4}
T\Ez(T)+ &\int_0^Tds \int_s^T E_z d\tau \le ~ a_0\left(\Ez(0)+\int_{-t^*}^0 ||z(\tau)||_{2-\s}^2 d\tau\right)
  +C(T,R)\sup_{\tau \in [0,T]}||z||^2_{2-\eta_*}
\\\nonumber &-a_1\int_0^Tds \int_s^T <f(u^1)-f(u^2),z_t>d\tau  -a_2\int_0^T <f(u^1)-f(u^2),z_t>d\tau.
\end{align*}
Since
\[
\int_0^Tds \int_s^T E_z d\tau \ge \frac{T}2 \int_{T-t^*}^T E_z d\tau ~~~\mbox{for}~T\ge 2t^*,
\]
we arrive to the following assertion.

\begin{lemma}\label{le:observbl}
Let $u^i \in C(0,T;H_0^2(\Omega))\cap C^1(0,T;L_2(\Omega)) \cap L_2(-t^*,T;H_0^2(\Omega))$ solve (\ref{plate}) with clamped boundary conditions and appropriate initial conditions on $[0,T]$ for $i=1,2$, $T\ge 2t^*$. Additionally, assume $u^i(t) \in \mathscr B_R(H^2(\Omega))$ for all $t\in [0,T]$.
Then the following estimates
\begin{align}\label{enest1}
 \frac{T}2 \Big[\Ez(T)+ &\int_{T-t^*}^T \Ez(\tau) d\tau\Big] \le ~ a_0\left(\Ez(0)+\int_{-t^*}^0 ||z(\tau)||_{2}^2 d\tau\right)
  +C(T,R)\sup_{\tau \in [0,T]}||z||^2_{2-\eta_*}
\\\nonumber &-a_1\int_0^Tds \int_s^T <f(u^1)-f(u^2),z_t>d\tau  -a_2\int_0^T <f(u^1)-f(u^2),z_t>d\tau
\end{align}
hold with $a_i$ independent of $T$ and $R$.
\end{lemma}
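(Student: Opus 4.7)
}

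The plan is to start from the energy identity for the difference $z=u^1-u^2$ on a subinterval $[s,t]\subset[0,T]$, namely
\[
\Ez(t)+k\int_s^t\|z_t\|^2 d\tau = \Ez(s)-\int_s^t\langle f(u^1)-f(u^2),z_t\rangle d\tau+\int_s^t\langle q^z(\tau),z_t(\tau)\rangle d\tau-\int_s^t\langle Lz,z_t\rangle d\tau.
\]
The first step is to tame the $q^z$--integral. Rather than using the bound from Lemma~\ref{le:q} directly in the form \eqref{hidden2} (which would only yield an intermediate estimate), I would integrate by parts in time to move the derivative off $z_t$:
\[
\int_s^t\langle q^z(\tau),z_t(\tau)\rangle d\tau=\langle q^z(t),z(t)\rangle-\langle q^z(s),z(s)\rangle-\int_s^t\langle \partial_\tau q^z(\tau),z(\tau)\rangle d\tau,
\]
and then invoke \eqref{assumpt1.75} (with the $H^{2-\eta}$ duality) to control $\partial_\tau q^z$ against $z$ in $H^{2-\eta}$, while \eqref{assumpt1} handles the boundary pairings $\langle q^z,z\rangle$. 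The operator $L$ is handled by the compact embedding given in Assumption~\ref{as:2}. Absorbing the $\e\int_s^t\|z_t\|^2 d\tau$ pieces into the $k\int\|z_t\|^2$ on the left (or simply controlling them with $\sup\|z\|_{2-\eta_*}^2$ via interpolation) produces
\[
\tfrac12 \Ez(t)+\int_s^t\|z_t\|^2d\tau\le \Ez(s)+C(T,R)\sup_{[0,T]}\|z\|_{2-\eta_*}^2+C\int_{s-t^*}^t\|z\|_{2-\s}^2d\tau-\int_s^t\langle f(u^1)-f(u^2),z_t\rangle d\tau.
\]

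The second step is to upgrade the kinetic control $\int\|z_t\|^2$ to full energy control $\int E_z$. For this I apply the multiplier estimate \eqref{zmult} (obtained by testing \eqref{difference} against $z$), which bounds $\int_s^t\|\Delta z\|^2 d\tau$ above in terms of $\int_s^t\|z_t\|^2 d\tau$, the same lower-order and $f$--terms, plus boundary pairings $E_z(t)+E_z(s)$. Adding this to the energy relation above, after choosing $\e$ small, yields the key intermediate bound
\[
\Ez(t)+\int_s^t \Ez(\tau) d\tau\le a_0\Big(\Ez(s)+\int_{s-t^*}^s\|z\|_{2-\s}^2 d\tau\Big)+C(T,R)\sup_{[0,T]}\|z\|_{2-\eta_*}^2-a_1\int_s^t\langle f(u^1)-f(u^2),z_t\rangle d\tau,
\]
valid for all $0\le s\le t\le T$.

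The third step is the integration trick. Set $t=T$ and integrate the above inequality in $s$ over $[0,T]$. By Fubini,
\[
\int_0^T ds\int_s^T \Ez(\tau)d\tau=\int_0^T\tau \Ez(\tau)d\tau\ge \int_{T-t^*}^T\tau \Ez(\tau)d\tau\ge \tfrac{T}{2}\int_{T-t^*}^T \Ez(\tau)d\tau
\]
provided $T\ge 2t^*$, which gives the leftmost term of \eqref{enest1}. The delayed lower-order integral $\int_0^T ds\int_{s-t^*}^s\|z\|_{2-\s}^2 d\tau$ is bounded by $(T+t^*)\sup\|z\|_{2-\eta_*}^2$ and absorbed into the $C(T,R)\sup$ term. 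To deal with the one remaining awkward contribution, $\int_0^T \Ez(s) ds$, coming from $a_0\Ez(s)$, I apply the intermediate estimate a second time on $[0,s]$, which produces $\Ez(s)\le a_0(\Ez(0)+\int_{-t^*}^0\|z\|_{2-\s}^2)+C(T,R)\sup\|z\|_{2-\eta_*}^2-a_1\int_0^s\langle f(u^1)-f(u^2),z_t\rangle d\tau$; integrating in $s$ yields precisely the single-integral $f$--term $-a_2\int_0^T\langle f(u^1)-f(u^2),z_t\rangle d\tau$ on the right-hand side of \eqref{enest1}, after relabeling constants.

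The main obstacle I anticipate is the bookkeeping around the hidden-compactness ``trade'' for $q^z$: one must ensure that the integration by parts, together with \eqref{assumpt1.75}, produces only lower-order terms in $\sup\|z\|_{2-\eta_*}^2$ plus a clean delayed $L^2$--integral of $\|z\|_{2-\s}^2$, without generating any uncontrolled $\int\|z_t\|^2$ or $\int\|\Delta z\|^2$ contribution that cannot subsequently be absorbed by the multiplier identity. The second (milder) subtlety is the second application of the intermediate estimate to process $\int_0^T \Ez(s) ds$ — one must keep the $f$--term coefficient as a single time integral rather than a double one, which is why this term appears in \eqref{enest1} separately from the double-integral $f$--term.
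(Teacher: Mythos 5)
Your proposal is correct and follows essentially the same route as the paper: energy identity for $z$, integration by parts in time on the $\langle q^z,z_t\rangle$ term combined with \eqref{assumpt1.75} and \eqref{assumpt1}, the multiplier bound \eqref{zmult} to upgrade $\int\|z_t\|^2$ to $\int E_z$, and then setting $t=T$, integrating in $s$, and using $\int_0^T ds\int_s^T E_z\,d\tau\ge\frac{T}{2}\int_{T-t^*}^T E_z\,d\tau$ for $T\ge 2t^*$. The only cosmetic point is your handling of $\int_0^T\Ez(s)\,ds$: integrating the $[0,s]$ estimate gives a second double integral $\int_0^T ds\int_0^s\langle f(u^1)-f(u^2),z_t\rangle d\tau$ rather than literally the single integral, but since this combines with the other double integral to $T\int_0^T\langle f(u^1)-f(u^2),z_t\rangle d\tau$ (or one can simply apply the intermediate estimate once on $[0,T]$), the stated form of \eqref{enest1} follows after relabeling constants.
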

\subsection{Dissipative Dynamical System}

Our next task in order to to make use of Theorem \ref{dissmooth} is to show dissipativity of the dynamical system $(S_t,\bH)$, namely that there exists a bounded, forward invariant, absorbing set. To show this, similar to the consideration in \cite[Theorem 9.3.4, p.480]{springer}, we consider the Lyapunov type function (with $\cE(u,u_t)$ as in  (\ref{fulle}) and with $\Pi_*(u)$ given by \eqref{potentiale})
\begin{align}
V(S_t y) \equiv &\cE(u(t),u_t(t))-<q^u(t),u(t)>+\nu\big(<u_t,u>+\frac{k}{2}||u||^2\big)\\\nonumber
&+\mu\Big(\int_{t-t^*}^t\Pi_*(u(s))ds+\int_0^{t^*}ds\int_{t-s}^t \Pi_*(u(\tau))d\tau\Big),
\end{align}
where $S_t y \equiv y(t)= (u(t),u_t(t),u^t)$ for $t \ge 0$ and $\mu, \nu$ are some positive numbers to be specified below.
In view of the results for the von Karman plate in \cite[Section 4.1.1]{springer}, we have that
\begin{equation}\label{energybounds}
c_0E(u,u_t) - c \le V(S_ty) \le c_1E(u,u_t)+\mu C t^*\int_{-t^*}^0 \Pi_*(u(t+\tau))d\tau +c
\end{equation}
for $\nu>0$ small enough,
 where $c_0,c_1,c,C >0$ are constant.
Here we make use  of the notation:
$
E(u,u_t) \equiv \dfrac{1}{2}||u_t||^2+\Pi_*(u).
$

 To obtain the above bound, we make direct use of our assumption on the $L_2$ bound in \eqref{assumpt1} on the term $<q^u,u>$. Additionally, we here need (and below) a critical lower bound on the potential energy, which can be found in various forms throughout \cite[p. 49 and p. 132]{springer}:
 \begin{proposition}\label{potentiallowerbound}
 For any $u \in H^2(\Omega)\cap H_0^1(\Omega)$ we have for any $\delta >0$ and any $0< \eta \le 2$
 \begin{equation*}
 ||u||^2_{2-\eta} \le \delta \big(||u||_2^2+||\Delta v(u)||^2) + C_{\delta},
 \end{equation*}
  where $v(u)=v(u,u)$ is the Airy stress function as defined in \eqref{airy-1}.
 \end{proposition}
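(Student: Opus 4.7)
The plan is a contradiction/compactness argument exploiting the quadratic homogeneity of the Airy stress function, namely $v(cu,cu)=c^2 v(u,u)$ (immediate from \eqref{airy-1}), together with the compact embedding $H^2(\Omega)\hookrightarrow H^{2-\eta}(\Omega)$. Negating the conclusion produces $\delta_0,\eta_0>0$ and a sequence $\{u_n\}\subset H^2(\Omega)\cap H_0^1(\Omega)$ with $\|u_n\|_{2-\eta_0}^2 > \delta_0(\|u_n\|_2^2+\|\Delta v(u_n)\|^2)+n$. Since the Sobolev embedding gives $\|u_n\|_{2-\eta_0}^2\le C\|u_n\|_2^2$, the additive $+n$ on the right forces $\|u_n\|_2\to\infty$ (otherwise both sides would stay bounded).

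The second step is to normalize. Set $w_n := u_n/\|u_n\|_2$, so that $\|w_n\|_2 = 1$. Using the homogeneity to write $\|\Delta v(u_n)\|^2 = \|u_n\|_2^4\,\|\Delta v(w_n)\|^2$ and dividing the failed inequality by $\|u_n\|_2^2$ recasts it as
$$
\|w_n\|_{2-\eta_0}^2 \;>\; \delta_0 \;+\; \delta_0\,\|u_n\|_2^2\,\|\Delta v(w_n)\|^2 \;+\; \frac{n}{\|u_n\|_2^2}.
$$
The left-hand side is bounded uniformly by $C\|w_n\|_2^2=C$, so this simultaneously forces $\|\Delta v(w_n)\|\to 0$ (at rate $O(\|u_n\|_2^{-1})$) and the strict positivity $\|w_n\|_{2-\eta_0}^2\ge\delta_0>0$.

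The third step extracts a nontrivial limit: by weak compactness and Rellich, along a subsequence $w_n\rightharpoonup w^*$ weakly in $H^2\cap H_0^1$ and strongly in $H^{2-\eta_0}$. The strong convergence combined with $\|w_n\|_{2-\eta_0}^2>\delta_0$ passes to the limit to yield $\|w^*\|_{2-\eta_0}^2\ge\delta_0>0$, in particular $w^*\not\equiv 0$. Then, by the sharp $W^{2,\infty}$ regularity of the Airy stress function provided by Lemma~\ref{l:airy-1}, and by the classical weak continuity (compensated compactness) of the Monge--Amp\`ere-type bracket $[w,w]$ under weak $H^2$ convergence, one has $v(w_n,w_n)\to v(w^*,w^*)$ in $H^2(\Omega)$ via elliptic regularity applied to \eqref{airy-1}. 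Combined with $\Delta v(w_n)\to 0$ strongly in $L_2$ and uniqueness for \eqref{airy-1}, this identifies $v(w^*,w^*)\equiv 0$, and hence $[w^*,w^*]=0$ in $\mathcal{D}'(\Omega)$.

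I expect the main obstacle to be the concluding rigidity step: to rule out a nontrivial $w\in H^2(\Omega)\cap H_0^1(\Omega)$ satisfying the degenerate Monge--Amp\`ere condition $[w,w]=0$, thereby contradicting $w^*\not\equiv 0$. This rigidity is classical for the von K\'arm\'an bracket, reflecting the fact that $[w,w]$ equals twice the Gaussian curvature of the graph of $w$, so that a developable $H^2$ function vanishing on the boundary of a smooth bounded domain must be identically zero; the precise statement and proof are available in the plate theory literature referenced in \cite{springer}. Applied to $w^*$, this yields the desired contradiction and completes the proof.
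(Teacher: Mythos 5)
Your overall strategy --- argue by contradiction, exploit the quadratic homogeneity $v(cu,cu)=c^2v(u,u)$ to normalize, and extract a weak limit $w^*\neq 0$ with $v(w^*,w^*)=0$, hence $[w^*,w^*]=0$ --- is sound; note that the paper gives no proof of this proposition (it defers to \cite{springer}), and your compactness scheme is the standard route. Everything up to and including the identification $[w^*,w^*]=0$ is essentially correct. (The assertion that $v(w_n,w_n)\to v(w^*,w^*)$ \emph{strongly} in $H^2$ is more than you need and is the harder statement to justify; it suffices that $\Delta v(w_n)\rightharpoonup \Delta v(w^*)$ weakly in $L_2$, which follows from the uniform $W^{2,\infty}$ bound of Lemma~\ref{l:airy-1} together with the weak continuity of $u\mapsto[u,u]$ from $H^2$-weak into distributions; combined with $\Delta v(w_n)\to 0$ strongly this gives $\Delta v(w^*)=0$ and hence $v(w^*)=0$ since $v(w^*)\in H_0^2(\Om)$.)

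The genuine gap is exactly where you flag it: the rigidity claim that a nonzero $w\in H^2(\Om)\cap H_0^1(\Om)$ cannot satisfy $[w,w]=0$ a.e. This is not ``classical plate theory'' at the regularity you need: the Hartman--Nirenberg developability/ruling structure requires $C^2$, and its extension to $W^{2,2}$ is a nontrivial result of Kirchheim and Pakzad from the 2000s that is not in the paper's references; as written, the decisive step is asserted rather than proved. (It cannot be sidestepped either: applying the proposition to $u=tw$ with $t\to\infty$ shows the inequality is \emph{equivalent} to this rigidity.) The fix in the clamped case $w^*\in H_0^2(\Om)$ --- which is the only case the paper uses --- is elementary and you should substitute it: by the symmetry of the trilinear form $(a,b,c)\mapsto\int_\Om[a,b]c\,d\xb$, which holds here because $w^*$ and $\nabla w^*$ vanish on $\partial\Om$, and since $[w^*,\tfrac12|\xb|^2]=\Delta w^*$, one gets
\[
0=\int_\Om [w^*,w^*]\,\tfrac{1}{2}|\xb|^2\,d\xb=\int_\Om [w^*,\tfrac12|\xb|^2]\,w^*\,d\xb=\int_\Om w^*\,\Delta w^*\,d\xb=-\|\nabla w^*\|^2,
\]
forcing $w^*=0$ and giving the desired contradiction. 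For $H^2(\Om)\cap H_0^1(\Om)$ as literally stated, this identity picks up Pohozaev-type boundary terms in $(\Dn w^*)^2$ and does not close so simply; you would need either to restrict the statement to $H_0^2(\Om)$ or to genuinely invoke the $W^{2,2}$ developability theory.
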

In what follows below, we will often make use of the above theorem to give $$<[u,F_0],u>+||u||^2 \le \delta \Pi_*(u)+C_{\delta,F_0}.$$

We now compute $\ds \dfrac{d}{dt} V(S_ty)$:
\begin{align*}
\dfrac{d}{dt} V(S_ty)=&~\dfrac{d}{dt}\cE(t)+\nu<u_{tt},u>+\nu||u_t||^2+k\nu <u_t,u> \\\nonumber
&-<q^u,u_t>-<q^u_t,u>\\\nonumber
&+\mu \dfrac{d}{dt} \Big\{\int_{t-t^*}^t\Pi_*(u(s)ds+\int_0^{t^*}\int_{t-s}^t \Pi_*(u(\tau))d\tau ds \Big\}\\[.1cm]
=&~ -k\|u_{t}\|^2+<p_0-Lu,u_t> -<q^u_t,u>\\\nonumber
&~+\nu<u_{tt}+ku_t,u>+\nu ||u_t||^2 +\mu\Pi_*(u(t))-\mu\Pi(u(t-t^*))\\\nonumber
&~+\mu t^*\Pi_*(u(t))-\mu\int_{-t^*}^0\Pi_{*}(u(t-\tau))d\tau
\end{align*}
Now, we make use of the relation $$\ds u_{tt}+ku_t=-\Delta^2u+p_0+q^u(t)+[u,v(u)+F_0]-Lu$$ owing to (\ref{plate}). Substituting this back into the relation above and simplifying yields:
\begin{align*}%\label{ddtV}
\dfrac{d}{dt} V(S_ty)
=& -\big(k-\nu\big)||u_t||^2-\nu||\Delta u||^2-\nu||\Delta v(u)||^2-\mu\Pi_*(u(t-t^*))-\mu\int_{-t^*}^0\Pi_*(u(t+\tau))d\tau\\[.1cm]\nonumber
&+\nu<[u,F_0],u>+\nu<q^u,u>+<p_0-Lu,u_t>+\nu<p_0-Lu,u> \\[.1cm]\nonumber
&-<q^u_t,u>+\mu (t^*+1)\Pi_*(u(t)).
\end{align*}
In the estimate that follows we make use of (a) standard splitting via Young's inequality, (b) the bound in Proposition~\ref{potentiallowerbound}, (c) the assumption that $0<\nu< \min\{1,k\}$. Then we have for all $\epsilon>0$
\begin{align*}
\dfrac{d}{dt}V(S_ty)\le&~(-k+\nu)||u_t||^2+\Big(\dfrac{\mu (1+t^*)}{2}-\nu\Big)||\Delta u||^2+\Big(\dfrac{\mu (1+t^*)}{4}-\nu\Big)||\Delta v(u)||^2 \\\nonumber&-\mu \Pi(u(t-t^*))- \mu\int_{-t^*}^0\Pi_*(u(t+\tau))d\tau\\[.1cm]\nonumber
&+\epsilon||u_t||^2+\epsilon\big(||\Delta u||^2+||\Delta v(u)||^2\big)+C_{\epsilon,\delta,p_0,F_0}+\epsilon||q^u||^2+|<q^u_t,u>|.
\end{align*}
Now, using \eqref{assumpt1}:
$$
||q^u(t)||^2 \le~ C\int_{t-t^*}^t||\Delta u(\tau)||^2 d\tau,
$$
and \eqref{assumpt1.75} with $\psi=u$:
\begin{align*}
|<q^u_t(t),u(t)>| \le  \epsilon \left[
||\Delta u(t)||^2+ ||\Delta u(t-t^*)||^2+\int_{-t^*}^0||\Delta u(t+\tau)||^2 d\tau\right]+
C_{\epsilon,t^*}||u(t)||^2
\end{align*}
 we have (again, making use of Proposition~\ref{potentiallowerbound}) the following inequality for all $\epsilon>0$:
\begin{align*}%\label{imptineq}
\dfrac{d}{dt}V(S_ty)\le&~\big(-k+\nu+\epsilon\big)||u_t||^2+\Big(\dfrac{\mu (1+t^*)}{2}+2\epsilon-\nu\Big)||\Delta u||^2+\Big(\dfrac{\mu (1+t^*)}{4}+\epsilon-\nu\Big)||\Delta v(u)||^2 \\\nonumber
&-(\mu -2\epsilon)\Pi_*(t-t^*)-(\mu-4\epsilon)\int_{-t^*}^0\Pi_*(u(t+\tau))d\tau
+C_{\epsilon}\|u(t)\|^2+C_{\epsilon,\mu,p_0,F_0}.
\end{align*}
And, for $0<\nu<k$, and for $\mu$ and $\epsilon$ sufficiently small, we have the following lemma:
\begin{lemma}\label{le:48}
For \textit{all} $k>0$ there exist $\mu, \nu >0$ and $c(\mu,\nu,t^*,k)>0$ and $C(\mu,\nu,p_0,F_0)>0$ such that
\begin{equation*}%\label{goodneg}
\dfrac{d}{dt}V(S_ty)\le-c\big\{||u_t||^2+||\Delta u||^2+||\Delta v(u)||^2 +\Pi_*(u(t-t^*))+\int_{-t^*}^0\Pi_*(u(t+\tau))d\tau\big\}+C.
\end{equation*}
\end{lemma}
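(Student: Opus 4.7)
\textbf{Proof proposal for Lemma~\ref{le:48}.}

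The plan is to show that with an appropriate hierarchical choice of the parameters $\nu$, $\mu$, $\e$ in the last displayed inequality of the excerpt, each of the five indicated quantities appears with a strictly negative coefficient, while the residual non-quadratic terms are absorbed either into the potential energy or into the additive constant $C$. Since all the heavy lifting (substituting the equation for $u_{tt}+ku_t$, applying \eqref{assumpt1}, \eqref{assumpt1.75}, Young's inequality, and Proposition~\ref{potentiallowerbound}) has already been carried out, what remains is essentially a bookkeeping exercise on the parameters.

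\emph{Step 1: choice of $\nu$.} Fix any $\nu$ with $0<\nu<\min\{1,k\}$; this already guarantees that the coefficient in front of $\|u_t\|^2$, namely $-k+\nu$, has strictly negative principal part.

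\emph{Step 2: choice of $\mu$.} With $\nu$ fixed, pick $\mu>0$ so small that
$$
\frac{\mu(1+t^*)}{2}<\frac{\nu}{2}.
$$
This ensures that the principal parts of the coefficients in front of $\|\Delta u\|^2$ and $\|\Delta v(u)\|^2$, before the $\e$-perturbation, are bounded by $-\nu/2$ and $-3\nu/4$ respectively, and simultaneously the coefficient $-\mu$ in front of $\Pi_*(u(t-t^*))$ and $\int_{-t^*}^0\Pi_*(u(t+\tau))\,d\tau$ is strictly negative.

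\emph{Step 3: choice of $\e$.} Now pick $\e>0$ so small that $\e<(k-\nu)/2$, $2\e<\nu/4$, $\e<\nu/4$, $2\e<\mu/2$, and $4\e<\mu/2$. Then all five coefficients are bounded above by fixed strictly negative constants, and we obtain an estimate of the form
\begin{align*}
\frac{d}{dt}V(S_t y) \le&\; -c_1\big(\|u_t\|^2+\|\Delta u\|^2+\|\Delta v(u)\|^2\big)\\
&-c_1\Big(\Pi_*(u(t-t^*))+\int_{-t^*}^0 \Pi_*(u(t+\tau))\,d\tau\Big)+C_\e\|u(t)\|^2+C_{\e,\mu,p_0,F_0}
\end{align*}
for some $c_1=c_1(\mu,\nu,t^*,k)>0$.

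\emph{Step 4: absorbing the lower order term $C_\e\|u(t)\|^2$.} This is the only point that requires more than arithmetic. We invoke Proposition~\ref{potentiallowerbound} with $\eta=2$, which gives
$$
\|u(t)\|^2 \le \delta\bigl(\|\Delta u\|^2+\|\Delta v(u)\|^2\bigr)+C_\delta
$$
for any $\delta>0$. Choosing $\delta=c_1/(2C_\e)$ lets us absorb $C_\e\|u\|^2$ into half of the negative $\|\Delta u\|^2+\|\Delta v(u)\|^2$ terms, at the expense of augmenting the additive constant. Setting $c=c_1/2$ and collecting all constants into $C$ yields the asserted inequality.

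The \emph{main obstacle}, if any, is Step~4: one has to be sure that the only uncontrolled positive contribution is $\|u(t)\|^2$ at the current time, since its delayed analogues are already absorbed by the $\Pi_*$ terms with coefficient $-\mu$. This is indeed the case because the $\e\|q^u\|^2$ bound from \eqref{assumpt1} and the $\e$-portion of the estimate of $|\langle q_t^u,u\rangle|$ from \eqref{assumpt1.75} are both controlled by the delayed potential terms that carry a full $-\mu$ coefficient, rather than by the current-time potential, so no circular dependence arises.
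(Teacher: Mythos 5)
Your proposal is correct and follows essentially the same route as the paper: the paper arrives at the same displayed inequality with coefficients $(-k+\nu+\epsilon)$, $\bigl(\tfrac{\mu(1+t^*)}{2}+2\epsilon-\nu\bigr)$, $\bigl(\tfrac{\mu(1+t^*)}{4}+\epsilon-\nu\bigr)$, $-(\mu-2\epsilon)$, $-(\mu-4\epsilon)$ and then simply states ``for $0<\nu<k$ and $\mu,\epsilon$ sufficiently small'' the lemma follows. Your Steps 1--3 just make that parameter hierarchy explicit, and your Step 4 (absorbing $C_\epsilon\|u(t)\|^2$ via Proposition~\ref{potentiallowerbound} with $\eta=2$) is exactly the absorption the paper leaves implicit.
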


From this lemma and the upper bound in \eqref{energybounds}, we have for some $\beta>0$ sufficiently small (again, depending on $\mu$ and $\nu$): \begin{equation}\label{gronish}
\dfrac{d}{dt}V(S_ty) +\beta V(S_ty) \le C,~~t>0,
\end{equation}
The estimate above in (\ref{gronish}) implies (by a version of Gronwall's inequality) that
\begin{equation*}
V(S_ty) \le V(y)e^{-\beta t}+\dfrac{C}{\beta}(1-e^{-\beta t}).
\end{equation*}
Hence, the set
$$
\mathscr{B} \equiv \left\{y \in \bH:~V(y) \le 1+\dfrac{C}{\beta} \right\},
$$  is a bounded forward invariant absorbing set. This gives that $(\bH,S_t)$ is dissipative.

\subsection{Asymptotic Smoothness}
Recall that our dynamical system is $(S_t,\bH)$, where $S_t$ is the evolution operator corresponding to plate solutions to (\ref{plate}) and $\bH = H_0^2(\Omega) \times L_2(\Omega) \times L_2(-t^*,0;H_0^2(\Omega))$. To show asymptotic smoothness of this dynamical system, we will make use of an abstract Theorem~\ref{psi}.

To make use of this theorem, we will consider our functional $\Psi$ to be comprised of lower order terms (compact with respect to $\bH$) and quasicompact ($\int_s^t <f(u^1)-f(u^2),z_t>d\tau$) terms. On the LHS of the above estimate, we see that we need to produce an estimate which bounds trajectories in $\bH$, i.e. $||(u(t),u_t(t),u^t)||^2_{\bH}$ (taking the metric $d$ to be $||\cdot||_{\bH}$). Such an estimate will be produced below by combining our energy estimates produced earlier:
\begin{lemma}\label{le:khan}
Suppose $z=u^1-u^2$ is as in (\ref{difference}), with $y^i(t)=(u^i(t),u_t(t)^i,u^{t,i})$ and $y^i(t) \in \mathscr B_R(\bH)$ for all $t\ge 0$. Also, let $\eta >0$ and $\Ez(t)$ be defined as  in (\ref{Ez}).
Then for every $0<\e<1$ there exists  $T=T_\e(R)$ such that the following estimate
\begin{equation*}%\label{smoothest}
E_z(T) + \int_{T-t^*}^T ||z(\tau)||_2^2 d\tau \le \epsilon + \Psi_{\epsilon,T,R}(y^1,y^2)
\end{equation*}
 holds,
where \begin{align*} \Psi_{\epsilon,T,R}(y^1,y^2) \equiv& C(R,T) \sup_{\tau \in [0,T]} ||z(\tau)||_{2-\eta}^2 +a_1\Big| \int_0^T<f(u^1(\tau))-f(u^2(\tau)),z_t(\tau)>d\tau\Big| \\\nonumber& + a_2\Big|\int_0^T \int_s^T <f(u^1(\tau))-f(u^2(\tau)),z_t(\tau)>d\tau ds\Big|.\end{align*}
\end{lemma}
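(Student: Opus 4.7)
The plan is to obtain the estimate as a direct consequence of the observability-type bound proven in Lemma~\ref{le:observbl}, exploiting the $T/2$ prefactor on its left-hand side to absorb the initial-energy contribution into $\epsilon$ by choosing $T$ large enough.

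\textbf{Step 1.} Invoke Lemma~\ref{le:observbl} on the interval $[0,T]$ for any $T\ge 2t^*$; the hypothesis $u^i(t)\in\mathscr{B}_R(H^2(\Omega))$ is met because $y^i(t)\in\mathscr{B}_R(\bH)$ by assumption. The initial state $y^i(0)$ lies in $\mathscr B_R(\bH)$, so the initial data for $z=u^1-u^2$ satisfies $\Ez(0)+\int_{-t^*}^0\|z(\tau)\|_2^2\,d\tau\le C_0 R^2$ with $C_0$ an absolute constant.

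\textbf{Step 2.} Divide the inequality of Lemma~\ref{le:observbl} through by $T/2$ and, on the left-hand side, bound $\int_{T-t^*}^T \Ez(\tau)\,d\tau$ from below by $\tfrac12\int_{T-t^*}^T\|z(\tau)\|_2^2\,d\tau$, legitimate since $\Ez(\tau)\ge\tfrac12\|z(\tau)\|_2^2$ from \eqref{Ez}. This produces
\[
\Ez(T)+\int_{T-t^*}^T\|z(\tau)\|_2^2\,d\tau \le \frac{4a_0 C_0 R^2}{T}+\frac{4C(T,R)}{T}\sup_{[0,T]}\|z\|_{2-\eta_*}^2 + \widetilde{N}(T),
\]
where $\widetilde{N}(T)$ collects the two nonlinear integrals involving $\langle f(u^1)-f(u^2),z_t\rangle$ from Lemma~\ref{le:observbl}, multiplied by $4/T$ and taken in absolute value.

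\textbf{Step 3.} Choose $T=T_\epsilon(R)\ge 2t^*$ large enough that $4a_0 C_0 R^2/T<\epsilon$. With this choice, the compact lower-order term and the two nonlinear integrals have exactly the form appearing in the definition of $\Psi_{\epsilon,T,R}$, up to a redefinition of the $(T,R)$-dependent constants $C(R,T)$, $a_1$, $a_2$ as $4C(T,R)/T$, $4a_1/T$, $4a_2/T$ respectively. The conclusion of the lemma follows.

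\textbf{Main obstacle.} There is no substantive obstacle beyond bookkeeping, as Lemma~\ref{le:observbl} does the heavy lifting. The only conceptual point worth emphasizing is the standard asymptotic-smoothness mechanism at play here: the intermediate-time observability produces a linear-in-$T$ left-hand side whose division by $T$ yields the decay factor needed to swallow the $O(R^2)$ initial-data contribution into $\epsilon$, while the compact lower-order terms and the nonlinear integrals — which cannot be handled at this stage — are deferred to the $\liminf\liminf$ check needed to apply Theorem~\ref{psi}. At that subsequent stage, the compact embedding $H_0^2(\Omega)\hookrightarrow H^{2-\eta}(\Omega)$ together with Theorem~\ref{nonest} (and in particular the decomposition \eqref{eq4.5} that exposes the quasi-compact nature of $\int\langle f(u^1)-f(u^2),z_t\rangle\,d\tau$) will dispatch the two residual pieces of $\Psi$.
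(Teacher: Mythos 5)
Your proposal is correct and is exactly the paper's argument: the authors prove the lemma in one line by dividing \eqref{enest1} (Lemma~\ref{le:observbl}) by $T$ and taking $T$ large enough so that the $O(R^2)$ initial-data term falls below $\epsilon$, with the lower-order and nonlinear integral terms absorbed into $\Psi_{\epsilon,T,R}$. Your additional bookkeeping (bounding $\int_{T-t^*}^T \Ez\,d\tau$ below by the $H^2$-integral via norm equivalence on $H_0^2(\Omega)$, and rescaling the constants $a_1,a_2,C(R,T)$) is all that was left implicit.
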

\begin{proof} It follows from \eqref{enest1} by dividing by $T$ and
taking $T$ large enough.
\end{proof}
In Lemma \ref{le:khan} above, we have obtained the necessary estimate for asymptotic smoothness; it now suffices to show that $\Psi$, as defined above, has the compensated compactness condition described in Theorem \ref{psi}.

Before proceeding, let us introduce some notation which will be used throughout the remainder of this section and in the following section. We will write \begin{equation}\label{notations} l.o.t. = \sup_{\tau \in [0,T]}||z(\tau)||^2_{2-\eta},~~
\mathcal{F}(z)= f(u^1)-f(u^2). \end{equation}

\begin{theorem}\label{smoothness}
The dynamical system $(S_t,\bH)$ generated by weak solutions to (\ref{plate}) is asymptotically smooth.
\end{theorem}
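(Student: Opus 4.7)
The plan is to apply the abstract Theorem~\ref{psi} with the functional $\Psi_{\epsilon,T,R}$ supplied by Lemma~\ref{le:khan}. Fix a bounded forward invariant set $B\subset\mathscr{B}_R(\bH)$, $R>0$, and observe that
\[
\|S_T y^1 - S_T y^2\|_{\bH}^2 \leq 2 E_z(T) + \int_{T-t^*}^T \|z(\tau)\|_2^2\, d\tau,
\]
so that for each $\epsilon>0$, choosing $T=T_\epsilon(R)$ as in Lemma~\ref{le:khan} gives (after an innocuous rescaling of $\epsilon$)
\[
\|S_T y^1 - S_T y^2\|_{\bH}^2 \leq \epsilon + \Psi_{\epsilon,T,R}(y^1,y^2).
\]
It then suffices to verify the compensated compactness condition $\liminf_m\liminf_n \Psi_{\epsilon,T,R}(y^m,y^n)=0$ for every sequence $\{y^n\}\subset B$.

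The first ingredient is the lower-order term $\sup_{[0,T]}\|z(\tau)\|_{2-\eta}^2$. Proposition~\ref{p:well} ensures each trajectory $u^n$ lies in $C([0,T];H^2_0(\Om))\cap C^1([0,T];L_2(\Om))$ with a bound depending only on $R$. The Aubin--Lions lemma, combined with the compact embedding $H^2(\Om)\hookrightarrow H^{2-\eta}(\Om)$, then yields precompactness of $\{u^n\}$ in $C([0,T];H^{2-\eta}(\Om))$. Passing to a subsequence, $\|u^m-u^n\|_{C([0,T];H^{2-\eta})}\to 0$, which drives the first contribution to $\Psi_{\epsilon,T,R}$ to zero.

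The decisive ingredient is handling the two $\langle\mathcal{F}(z),z_t\rangle$-terms. Here I invoke the decomposition supplied by Theorem~\ref{nonest}, rewriting, for $0\le s\le T$,
\[
-\int_s^T \langle \mathcal{F}(z),z_t\rangle\, d\tau = \frac14\bigl[Q(z(T))-Q(z(s))\bigr] + \frac12\int_s^T P(z(\tau))\, d\tau,
\]
and then integrating in $s$ for the second $\Psi$-term. The boundary pieces $Q(z(t))$ are expressed through the Airy stress function $v(\cdot,\cdot)$ evaluated at $u^1+u^2$ and $z$; by Lemma~\ref{l:airy-1} and the compactness of $H^2\hookrightarrow H^{2-\eta}$, the map $z\mapsto v(u^1+u^2,z)$ carries weakly convergent sequences in $H^2_0(\Om)$ into strongly convergent ones in $W^{2,\infty}(\Om)$, so $Q(z^{mn})\to 0$ along the subsequence chosen above. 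The integrand $P(z)$ in \eqref{4.9aa} couples the weakly-only convergent velocities $u^i_t\in L_2(\Om)$ against brackets of the form $[u^i,v(z)]$ and $[z,v(u^1+u^2,z)]$; since $v(z)=v(z,z)$ is quadratic in $z$ and $z^{mn}\to 0$ in $H^{2-\eta}(\Om)$, the quantities $\|v(z^{mn})\|_{W^{2,\infty}}$ and $\|v(u^1+u^2,z^{mn})\|_{W^{2,\infty}}$ tend to zero uniformly in $\tau\in[0,T]$ by Lemma~\ref{l:airy-1}. Paired with the $R$-uniform $L_2$-bound on $u^i_t$ and the $H^2$-bound on $u^i$, the integral $\int P(z)d\tau$ then also vanishes in the $\liminf_m\liminf_n$ sense.

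The main obstacle is precisely this last point: the velocity components $u^i_t$ enjoy only weak $L_2$-convergence, and a naive attempt would leave an uncontrolled cross term. The resolution is the \emph{quadratic} dependence of the Airy stress function on its argument, which transmutes the strong $H^{2-\eta}$-convergence of $z^{mn}$ into genuinely strong $W^{2,\infty}$-convergence of $v(z^{mn})$ and of the mixed bracket $v(u^1+u^2,z^{mn})$; this uniform smallness compensates for the absence of strong convergence of $u^i_t$. Once these compactness features are established, $\Psi_{\epsilon,T,R}$ satisfies the required $\liminf\liminf$ condition and Theorem~\ref{psi} delivers asymptotic smoothness of $(S_t,\bH)$.
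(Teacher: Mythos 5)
Your overall scaffolding (Lemma~\ref{le:khan} feeding Theorem~\ref{psi}, Aubin--Lions for the lower-order term) matches the paper's, and the exact-derivative piece $Q$ is indeed harmless — though not for the reason you give: one only needs $\|\Delta v(u^1+u^2,z^{mn})\|\to 0$ and $\|[z^{mn},z^{mn}]\|_{-2}\to 0$, which follow from $z^{mn}\to 0$ in $H^{2-\eta}$ together with the $H^2$ bound via $\|[u,w]\|_{-2}\le C\|u\|_{2-\beta}\|w\|_{1+\beta}$. The genuine gap is in your treatment of $P(z)$. You assert that $\|v(z^{mn})\|_{W^{2,\infty}}$ and $\|v(u^1+u^2,z^{mn})\|_{W^{2,\infty}}$ tend to zero because $v$ is quadratic and $z^{mn}\to 0$ in $H^{2-\eta}$. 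Lemma~\ref{l:airy-1} gives only $|v(u,w)|_{W^{2,\infty}}\le C\|u\|_2\|w\|_2$ and Lipschitz dependence on the \emph{$H^2$} norms; there is no estimate of the form $|v(u,w)|_{W^{2,\infty}}\le C\|u\|_2\|w\|_{2-\eta}$, nor can one be expected — $W^{2,\infty}$ is already the sharp, critical regularity of the Airy function. From smallness of $z^{mn}$ in $H^{2-\eta}$ plus boundedness in $H^2$ one obtains smallness of $v(z^{mn})$ only at the level of $\|\Delta v\|_{L_2}$, so $[u^i,v(z^{mn})]$ is a product of two merely bounded-and-weakly-small $L_2$ quantities; it is small in $L_1$ but not in $L_2$, and hence cannot be paired with $u^i_t\in L_2$. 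This is precisely the "critical regularity" of $P(z)$ that the paper emphasizes: even in the quasistability step it controls $P(z)$ only \emph{on the attractor}, by approximating the velocities $u^i_t$ by a finite $\epsilon$-net in $H^2_0$ — a device unavailable here, since the compactness is what you are trying to prove. As written, the step is circular.

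The paper's proof avoids $P(z)$ entirely at this stage. It uses the alternative algebraic identity
\[
<\mathcal F(z),z_t> \;=\; \tfrac14\tfrac{d}{d\tau}\bigl\{-\|\Delta v(u^1)\|^2-\|\Delta v(u^2)\|^2+2<[z,z],F_0>\bigr\} \;-\; <[v(u^2),u^2],u^1_t> \;-\; <[v(u^1),u^1],u^2_t>,
\]
and then evaluates the \emph{iterated} limits $\lim_n\lim_m$ of $\int_s^T<\mathcal F(z^{n,m}),z^{n,m}_t>\,d\tau$ along weakly convergent subsequences $w^i\rightharpoonup w$. The cross terms $<[v(w^n),w^n],w^m_t>$ do \emph{not} vanish individually; their iterated weak limits sum to $2\int_s^T<[v(w),w],w_t>$, which is again an exact derivative and cancels the limit of the $\frac{d}{d\tau}$-part. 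Nothing is small term by term — the conclusion comes from cancellation, which is the whole point of the compensated compactness criterion. To repair your argument, replace the $Q$--$P$ decomposition by this one (or supply a genuinely new mechanism for $P$).
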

\begin{proof}
In line with the discussion above, we aim to make use of Theorem \ref{psi}. To do so, it suffices to show the compensated compactness condition for $\Psi_{\epsilon,T,R}$ which we now write as $\Psi$, with $\epsilon, T,$ and $R$ fixed along with the other constants given by the equation. Let $B$ be a bounded, positively invariant set in $\bH$, and let $\{y_n\}\subset B \subset \mathscr{B}_R(\bH)$. We would like to show that
$$
\liminf_m \liminf_n \Psi(y_n,y_m) = 0.
$$
More specifically, for any initial data $U_0^1=(u^1_0,u^1_1,\eta^1),
~U^2_0=(u^2_0,u^2_1,\eta^2) \in B $ (where $\eta^i$ belongs to $L_2(-t^*,0: H^2_0(\Om))$) we define
\begin{equation*}
\widetilde{ \Psi}^2 (U^1_0, U^2_0) = \Big|\int_0^T <\mathcal{F}(z)(\tau),
z_t (\tau)>d\tau\Big| +\Big |\int_0^T \int_s^T <\mathcal{F}(z(\tau)) ,z_t(\tau) >d\tau ds \Big|
\end{equation*}
where the function $z = u^1 -u^2 $ has initial data $U^1_0-U^2_0$ and solves (\ref{difference}). The key to
compensated compactness is the following representation for the bracket \cite[pp. 598-599]{springer}:
\begin{align*}
<\mathcal{F}(z)(\tau) ,z_t(\tau)>=& ~\frac{1}{4} \frac{d}{d\tau} \big\{ - ||\Delta v(u^1) ||^2
- ||\Delta v(u^2) ||^2 + 2 <[z,z],F_0> \big\}\\\nonumber&-< [ v(u^2),u^2], u^1_t> - %
< [ v(u^1) , u^1 ], u^2_t>.
\end{align*}
Integrating the above expression in time and evaluating on the difference of
two solutions $z^{n,m} =w^n-w^m $ with initial data $W_0^n-W_0^m$, where $w^{i} \rightharpoonup w$, yields:
\begin{align}\label{itlim}
\lim_{n\to \infty}\lim_{m\to \infty} \int_s^T <\mathcal{F}(z^{n,m})(\tau)
,z_t^{n,m}(\tau) > d\tau= \dfrac{1}{2} \big\{ ||\Delta v(w(s)) ||^2 - ||\Delta v(w(T))||^2 \big\} &  \\\nonumber
- \lim_{n \to \infty} \lim_{m \to \infty} \int_s^T \big\{ < [ v(w^n),
w^n ], w_t^m> + < [ v(w^m), w^m ], w_t^n>\big\} &,
\end{align}
where we have used (i) the weak convergence in $H^2(\Omega)$ of $z^{n,m} $
to 0, and (ii) compactness of $\Delta v(\cdot) $ from $H^2(\Omega) \rightarrow
L_2(\Omega) $ as in Lemma~\ref{l:airy-1}. The iterated limit in (\ref{itlim}) is handled via iterated weak
convergence, as follows:
\begin{equation*}
\lim_{n \to \infty} \lim_{m \to \infty} \int_s^T \big\{ < [ v(w^n), w^n
], w_t^m> + < [ v(w^m), w^m ], w_t^n>\big\}
\end{equation*}
\begin{equation*}
= 2 \int_s^T <[ v(w), w] , w_t> = \frac{1}{2} ||\Delta v(w)(s) ||^2 - \frac{%
1}{2} || \Delta v(w)(T) ||^2.
\end{equation*}
 This yields the desired conclusion, that
\begin{equation*}
\lim_{n\rightarrow \infty }\lim_{m\rightarrow \infty }\int_{s}^{T}(\mathcal{F%
}<z^{n,m}(\tau)),z_{t}^{n,m}(\tau)>d\tau=0.
\end{equation*}%
The second integral term in $\widetilde{ \Psi }^2$ is handled similarly.
Since the term $l.o.t.$ above is
compact (below energy level) via the Sobolev embedding,
as a
consequence we obtain
\begin{equation*}
\underset{m\rightarrow \infty }{\lim \inf }\underset{n\rightarrow \infty }{%
\lim \inf }\widetilde{ \Psi }(y_{n},y_{m})=0.
\end{equation*}
 This concludes the proof of the asymptotic smoothness via Theorem \ref{psi}.
\end{proof}
Having shown the asymptotic smoothness property, we can now conclude by Theorem \ref{dissmooth} that there exists a compact global attractor $\bA \subset \bH$ for the dynamical system $(S_t, \bH)$.

\subsection{Quasistability Estimate}
In this section we refine our methods in the asymptotic smoothness calculation and work on trajectories from the attractor, whose existence has been established in the previous sections.

Analyzing \eqref{enest1}, we may also write
\begin{align}\label{eq-obsr1}
T\left[E_z(T)+\int_{T-t^*}^TE_z(\tau)d\tau\right] \le c\big(E_z(0)+\int_{-t^*}^0||z(\tau)||_2^2d\tau\big)
\\
+
C\cdot T \sup_{s\in [0,T]}\Big|\int_s^T<\cF(z),z_t>d\tau \Big|+C(R,T)\sup_{\tau \in [s,t]} ||z||^2_{2-\eta},
\notag
\end{align}
where $\cF(z)$ is given in \eqref{notations}.
We note that $c$ does not depend on $T\ge \min\{1,2t^*\}$, and $l.o.t.$ is of quadratic
order.

In order to prove the quasistability estimate (as in \eqref{quasi}), we have to handle the non-compact term $<
\mathcal{F}(z),z_{t}> $. We recall the  relation \eqref{eq4.5}  in Theorem \ref{nonest}:
if  $u^i \in C(s,t;H^2(\Omega))\cap C^1(s,t;L_2(\Omega))$ with $u^i(\tau) \in \mathscr{B}_R(L_2(\Omega))$
for $\tau\in [s,t]$, then
\begin{align}\label{1new}
\Big|
\int_s^t <\cF(z),z_t(\tau)>d\tau \le &~C(R)\sup_{\tau \in [s,t]} ||z||^2_{2-\eta}+C\frac12\Big|
 \int_s^t P(z(\tau))d\tau\Big|
 \end{align} for some $0<\eta<1/2$.
Here $P(z)$ is given by (\ref{4.9aa}).
\par
Let $\gamma_{u^1} =\{(u^1(t),u^1_{t}(t),[u^1]^t):t\in \mathbb{R\}}$ and $\gamma_{u^2} =\{(u^2(t),u^2_{t}(t),[u^2]^t):t\in \mathbb{R\}}$ be trajectories from the
attractor $\mathbf{A}$. It is clear that for the pair $%
u^1(t)$ and $u^2(t)$ satisfy the hypotheses of the estimate in \eqref{1new} for every
interval $[s,t]$.
Our main goal is to handle the second term on the right hand side of (\ref{1new}) which is of {\it critical
regularity}.
 To accomplish this we shall use the already established {\it compactness} of the attractor in the state space $\bH=H_0^2(\Omega)\times L_2(\Omega) \times L_2(-t^*,0;H_0^2(\Omega))$.
\par
Since for every $\tau \in \mathbb{R}$, the element $u^i_{t}(\tau )$ belongs to
a compact set in $L_{2}(\Omega )$, by density of $H_0^2(\Omega) $ in $L_2(\Omega) $
we can assume, without a loss of generality, that  for every $\epsilon >0$ there exists a finite set $%
\{\phi _{j}\}\subset H_{0}^{2}(\Omega )$ , $ j = 1,2,...,n(\epsilon) $, such that  for all $\tau \in \mathbb{R} $ we can find indices $%
j_{1}(\tau) $ and $j_{2} (\tau) $ so that
\begin{equation*}%\label{2new}
||u^1_{t}(\tau )-\phi _{j_{1}(\tau)}||+||u^2_{t}(\tau)-\phi _{j_{2}(\tau)}||\leq \epsilon
~~\mbox{ for all } ~\tau \in \mathbb{R}.
\end{equation*}
Let $P(z)$ be given by (\ref{4.9aa}) with the pair $u^1(t)$ and $u^2(t)$ and
\begin{equation*}
P_{j_{1},j_{2}}(z)\equiv -\left( \phi _{j_{1}},[u^1,v(z)]\right) -\left( \phi
_{j_{2}},[u^2,v(z,z)]\right) -\left( \phi _{j_{1}}+\phi
_{j_{2}},[z,v(u^1+u^2,z)]\right),
\end{equation*}%
where $z(t)=u^1(t)-u^2(t).$ It can be easily shown that for all $j_1, j_2 \leq n(\epsilon) $
\begin{equation}\label{3new}
||P(z(\tau))-P_{j_{1}(\tau),j_{2}(\tau)}(z(\tau))||\leq \epsilon C(\bA)||z(\tau )||_{2}^{2}
\end{equation}
uniformly in $\tau \in  \mathbb{R} $.
\par
Starting with  the estimate  (1.4.17) page 41 \cite{springer},
\begin{equation*}%\label{kar1}
||[u,w]||_{-2} \leq C ||u||_{2-\beta}||w||_{1+\beta} ,~~  \forall \beta \in [0,1)
\end{equation*}
and exploiting elliptic regularity
one obtains
\begin{equation}\label{kar2}
||[u,v(z,w)]||_{-2}  \leq C ||u||_{2-\beta} || [z,w]||_{-2}  \leq C ||u||_{2-\beta} ||z||_{2-\beta_1} ||w||_{1+\beta_1 },
 \end{equation}
 where above inequality holds for any  $\beta, \beta_1 \in [0,1) $
\par
 Recalling the  additional smoothness of $\phi_j \in H_0^2(\Omega)$, along with the  estimate in (\ref{kar2})
 applied with $\beta = \beta_1 =\eta $,
 and accounting  the structure of $P_j$ terms one obtains:
\begin{equation*}
||P_{j_{1},j_{2}}(z)||\leq C(\bA)\big( ||\phi _{j_{1}}||_{2}+||\phi
_{j_{2}}||_{2}\big) ||z(\tau )||_{2-\eta }^{2}
\end{equation*}%
for some $0<\eta<1$. So we have
\begin{equation}\label{5.5}
\underset{j_{1},j_{2}}{\sup }||P_{j_{1},j_{2}}(z)||\leq
C(\epsilon)||z(\tau )||_{2-\eta }^{2}~~~\mbox{for some $0<\eta<1$},
\end{equation}%
where $C(\epsilon) \rightarrow \infty $ when $\epsilon \rightarrow 0 $.
Taking into account (\ref{3new}) and (\ref{5.5}) in (\ref{1new})  we obtain
\begin{equation}\label{4.12}
%\underset{t\in \lbrack 0,T]}{\sup }
\Big|\int_{s}^{t}< \mathcal{F}%
(z),z_{t}> \Big|\leq C(\epsilon ,T,\bA)\underset{\tau \in
\lbrack s,t]}{\sup }||z(\tau)||_{2-\eta }^{2}+\epsilon
\int_{s}^{t}||z(\tau )||_{2}^{2}d\tau
\end{equation}%
for all $s\in \mathbb{R}$ with $\eta >0$ and $t>s$.
Considering \eqref{4.12} and taking $T$ sufficiently large, we have
from \eqref{eq-obsr1}
\begin{equation*}
E_{z}(T)+\int_{T-t^*}^{T}||z(\tau)||_2^2 d\tau\leq \gamma \big(E_z(0)+\int_{-t^*}^0||z(\tau)||_2^2d\tau)+C(\mathbf{A},T,k,t^*)\underset{\tau \in \lbrack0,T]}{\sup }||z(\tau )||_{2-\eta }^2
\end{equation*}%
with $\gamma<1$.
Thus by the standard argument (see \cite{springer}) we finally conclude that for $y(t)=(z(t),z_t(t),z^t)$
\begin{equation*}
||y(t)||_{\bH}^2 \le C(\sigma,\bA)||y(0)||_{\bH}^2e^{-\sigma t}+C \sup_{\tau \in [0,t]} ||z(\tau)||_{2-\eta}^2.
\end{equation*}
Hence
on the strength of Theorem \ref{t:FD}, applied
with $B =\mathbf{A}$
and $$
\bH = H_0^2(\Omega) \times
L_2(\Omega) \times L_2(-t^*,0;H_0^2(\Omega)),
$$
where $H_1 = H^{2-\eta}(\Omega)$, we
conclude that $\mathbf{A}$ has a finite fractal dimension.
\par
Additionally, Theorem \ref{t:FD} guarantees that $$||u_{tt}(t)||^2+||u_t(t)||_2^2 \le C~\text{ for all } t \in \R.$$
Since $u_{t}\in H^{2}(\Omega )\subset C(\Omega)$, elliptic regularity theory for
$$
\Delta ^{2}u=-u_{tt}-k u_t-f(u)-Lu+q(u^t,t)
$$
with the clamped boundary conditions give that $$\ ||u(t)||_{4}^{2}\leq C ~\text{for all} ~
t\in \reals.$$
Thus, we can conclude additional regularity of the trajectories from the attractor $\bA \subset \bH$
stated in Theorem~\ref{maintheorem}.
\par
We have now completed the proof of Theorem \ref{maintheorem}.

\subsection{Extensions and Open Problems}
In this section, we briefly mention possible extensions of our results and open problems pertaining to the models discussed above.
\begin{itemize}
\item Other homogeneous boundary conditions: hinged, simply supported, free-type, or combinations thereof.
\item Nonlinear internal damping, i.e. considering $k(u_t)$ in the plate equation, where $k(\cdot)$ is a Nemitsky operator.
\item Boundary damping via (nonlinear) hinged dissipation (\cite{ACC}).
\item Other physical nonlinearities; replacing the von Karman nonlinearity in the considerations above with Berger or Kirchoff-type nonlinearity (as discussed in \cite{oldchueshov1,ch-0,oldchueshov2}). This should not present major technical issues.
\item Convergence to equilibria; one may conjecture that individual trajectories of the full flow-plate system converge to single elements of the attractor. However, Dowell's conjecture (supported by numerics) states that
only in the subsonic case  solutions stabilize to stationary points, and
in the supersonic case,  locally stable periodic (or even chaotic) orbits are possible.
Hence, it is likely that one can discuss this convergence only in the subsonic case.
The principal issue here is finiteness of the dissipation integral
\[
\int_0^\infty ||u_t||^2 dt
\]
We can easily guarantee this if we have additional
plate damping in the coupled system. In this case we can achieve stabilization
in the same way as it done in \cite{springer} for the rotational case (see also \cite{chuey}).
So the issue becomes how to obtain some form of finiteness
of dissipation integral in the case where full energy of coupled system is preserved (the reduction result Theorem \ref{rewrite} is not employed). This issue
remains open (see also the corresponding remark in \cite[Section 12.4.2]{springer}).
\end{itemize}

\section{Flow-Plate Interactions}\label{flowplateint}
In this section we provide sketch of the proof of Theorem~\ref{rewrite},
which is needed for our principal application of Theorem ~\ref{maintheorem}
to the long-time dynamics of  the fully coupled model:
\begin{equation}\label{flowplate-1}\begin{cases}
u_{tt}+\Delta^2u+ku_t+f(u)= p_0+\big(\partial_t+U\partial_x\big)\gamma[\phi] & \text { in } \Omega\times (0,T),\\
u(0)=u_0;~~u_t(0)=u_1,\\
u=\Dn u = 0 & \text{ on } \partial\Omega\times (0,T),\\
(\partial_t+U\partial_x)^2\phi=\Delta \phi & \text { in } \realsthree_+ \times (0,T),\\
\phi(0)=\phi_0;~~\phi_t(0)=\phi_1,\\
\Dn \phi = -\big[(\partial_t+U\partial_x)u (\xb)\big]\cdot \mathbf{1}_{\Omega}(\xb) & \text{ on } \realstwo_{\{(x,y)\}} \times (0,T).
\end{cases}
\end{equation}
\begin{proof}
By Theorem \ref{well-U}, the system (\ref{flowplate-1})
generates a strongly continuous  semigroup $T_t$ on $H$. This is to say
that $ (\phi(t), \phi_t(t), u(t), u_t(t) )
= T_t(\phi_0,\phi_1,u_0, u_1),  t > 0 $.
The main idea behind the proof of Theorem~\ref{rewrite} (see \cite[Section 6.6.5]{springer}) is to split gas flow variable $\phi$ in two components:
$\phi(\xb,t) = \phi^*(\xb,t)+\phi^{**}(\xb,t)$,
where $\phi^*(\xb,t)$ solves \eqref{flow} with $d(\xb,t)\equiv 0$
and  $\phi^{**}(\xb,t)$ is solution to non-homogenous problem
\eqref{flow} with the zero initial data $\phi_0=0$ and $\phi_1=0$.
Here we note that  with  $(\phi_0,\phi_1) \in H^1(\R_+^3)\times  L_2(\R_+^3) $
one obtains \cite{miyatake1973,supersonic}
 $(\phi^*(t),  \phi^{*}_t(t)) \in   H^1(\R_+^3)\times  L_2(\R_+^3)$. Thus, by Theorem \ref{well-U}
 we also have that
 $$
 (\phi^{**}(t),  \phi^{**}_t(t) \in  H^1(\R_+^3)\times  L_2(\R_+^3).
 $$
 Note that \textit{this last property is not valid for a flow solution with $L_2$ boundary Neumann data}\footnote{
 The general theory will provide at most $ H^{2/3}(\R_+^3\times [0,T])$.}.  However, the improved regularity is due to the interaction with the plate and the resulting cancelations on  the interface.
  Moreover, we also obtain a meaningful ``hidden trace regularity" for the aeroelastic potential on the boundary of the structure \cite{supersonic}:
  \begin{equation}\label{trace}
  (  \partial _{t} + U \partial_{x} )\gamma [ \phi ] \in L_2(0, T; H^{-1/2}(\Omega) )
  \end{equation}
  where $T$ is arbitrary.

Then, using the Kirchhoff type representation for the solution  $\phi^*(\xb,t)$
in $\R_+^3$ (see, e.g., Theorem~6.6.12 in \cite{springer}), we can conclude that
if the initial data   $\phi_0$ and $\phi_1$ are  localized in the ball $K_R=\{\xb\in \R_+^3: |\xb|\le R\}$,
then by  finite dependence on the domain of the signal in three dimensions  (Hyugen's principle),
   one obtains  $\phi^*(\xb,t)\equiv 0$ for all $\xb\in K_R$
and $t\ge t_R$. Thus we have that
\[
\big(\partial_t+U\partial_x\big)\gamma[\phi^*]\equiv0,~~~\xb\in \Om,~t\ge t_R.
\]
Thus it remains  to consider flow variable $\phi^{**}$, whose aeroelastic potential on the boundary coincides with
that of $\phi$, and hence it displays regularity as in (\ref{trace}) . This allows one to perform calculations
as in  \cite[Theorem~6.6.10]{springer})  in order
to obtain the representation
\begin{align*}%\label{ch6.2.3.23}
 ( \partial _{t} + U \partial_{x} )\gamma [ \phi ] = & -\, d(x,y,t)
\\ &
+\, \frac{1}{2\pi}
 \int_{0} ^t  ds \int_{0}^{2\pi} d\theta
[M_\theta d] (x - (U+\sin\theta)s,
y - s\cos\theta, t-s). \nonumber
\end{align*}
Now using the same calculations as in \cite[p.333]{springer}
we arrive at  the following equation:
\begin{equation}\label{reducedplate-1}
u_{tt}+\Delta^2u+ku_t-[u,v(u)+F_0]=p_0-(\partial_t+U\partial_x)u-q^u(t)
\end{equation}
for $t$ large enough, with
\begin{equation}\label{potential-1}
q^u(t)=\dfrac{1}{2\pi}\int_0^{t^*}ds\int_0^{2\pi}d\theta [M^2_{\theta}\widehat u](x-(U+\sin \theta)s,y-s\cos \theta, t-s).
\end{equation}
Here, $\widehat u$ is the extension of $u$ by 0 outside of $\Omega$; $M_{\theta} = \sin\theta\partial_x+\cos \theta \partial_y$ and $t^*$ is given by
\eqref{delay}.
\end{proof}

\section{Appendix: Properties of delayed force $q$ }\label{appendix}
In this Appendix we consider properties of the delayed (aerodynamic type) force
and prove Proposition~\ref{pr:q} and Lemma~\ref{le:q}.
\subsection{Hidden Compactness of Retarded Potential: Proof of Proposition~\ref{pr:q}}
The proof of the bounds \eqref{qnegest}--\eqref{qnegest3} can be found in \cite{Chu92b} and \cite{springer}.
Thus we need to check \eqref{qnegest4} only. Without loss of generality we can assume $u\in C(-t^*,+\infty;
C_0^\infty(\Om))$.
\par
For any $\psi \in H^1_0(\Omega)$ we have
\begin{align}
<q_t^u(t),\psi> =& \Big<\int_0^{2\pi}\frac{1}{2 \pi}[M^2_{\theta}\widehat u]\big(\xb(U,\theta,0),t \big)d\theta,\psi\Big>\\\nonumber
&-\Big<\int_0^{2\pi}\frac{1}{2 \pi}[M^2_{\theta}\widehat u]\big(\xb(U,\theta,t^*),t-t^* \big)d\theta,\psi \Big>\\\nonumber
&+\Big<\Big( \int_0^{t^*}\int_0^{2\pi}(U+\sin\theta)\frac{1}{2 \pi}[M^2_{\theta}\widehat u]_x\big(\xb(U,\theta,s),t-s \big)d\theta ds\Big),\psi\Big>\\\nonumber
&+\Big< \Big(\int_0^{t^*}\int_0^{2\pi}(\cos\theta)\frac{1}{2 \pi}[M^2_{\theta}\widehat u]_y\big(\xb(U,\theta,s),t-s \big)d\theta ds\Big),\psi\Big>,
\end{align}
recalling that $\xb(U,\theta,s) = (x-(U+\sin\theta)s,y-s\cos\theta)$.
 In all integrals above we extend the integration over $\Omega$ to all of $\realstwo$ and change spatial variables.
\begin{align*}
\Big|<q^u_t(t),\psi> \Big|\le &~ C\Big\{\Big|\int_0^{2\pi}\int_{\realstwo}[M_\theta^2 \widehat u](\tau)\psi d\xb ~d\theta \Big| \\\nonumber
&+\Big|\int_0^{2\pi}\int_{\realstwo}[M_\theta^2 \widehat u](t-t^*)\psi(\xb(U,\theta, -t^*)) d\xb ~d\theta \Big| \\\nonumber
&+\Big| \int_0^{t^*}\int_0^{2\pi}\int_{\realstwo}(U+\sin \theta)[M_\theta^2 \widehat u]_x(\xb,\tau-s)\psi(\xb(U,\theta, -s)) d\xb ~d\theta~ds \Big| \\\nonumber
&+\Big| \int_0^{t^*}\int_0^{2\pi}\int_{\realstwo}\cos \theta[M_\theta^2 \widehat u]_y(\xb,\tau-s)\psi(\xb(U,\theta, -s)) d\xb ~d\theta~ds \Big|\Big\}.
\end{align*}
Now, we note that $M_{\theta}$ can be moved under the integration in $\xb$, and we have $|M_{\theta}f(\xb)| \le |f_x(\xb)|+|f_y(\xb)|$ for all $f$. Hence, we integrate by parts with a single $M_{\theta}$ in both of the first integrals; in the third and fourth integrals, we also integrate by parts once as well. This leaves us with:
$$
|<q^u_t(t),\psi>| \le C\Big\{ ||u(t)||_1+||u(t-t^*)||_1+\int_{-t^*}^0||u(t+\tau)||_2d\tau\Big\}||\psi||_1.
$$
This implies the conclusion in \eqref{qnegest4}. The proof of Proposition~\ref{pr:q} is complete.

\subsection{Proof of Lemma~\ref{le:q} }
 The relation in \eqref{hidden1} easily follows from
 \eqref{assumpt1-0} and simple formula:
 \[
 \int_0^t d\tau \int_{\tau-t*}^\tau \phi(s) ds\le t^* \int_{-t*}^t \phi(s) ds,~~~\forall~\phi\in L_1(0,T).
 \]
 Now we prove \eqref{hidden2}.
  Integrating by parts in $t$, and then applying \eqref{assumpt1.75} with $\psi = u(t)$, we have:
\begin{align*}%\label{needed}
\Big|\int_0^t<q^u(\tau),u_t(\tau)>d\tau \Big|\le & C\Big\{\int_0^t||u(\tau)||_2 ||u(\tau)||_{2-\eta}d\tau +\int_0^t ||u(\tau-t^*)||_2||u(\tau)||_{2-\eta} d\tau \\ \nonumber
&+\int_0^t \int_0^{t^*}||u(\tau-s)||_2||u(\tau)||_{2-\eta} dsd\tau\\ \nonumber
&+||q^u(t)||_{-\s}||u(t)||_\s+||q^u(0)||_{-\s}||u(0)||_\s\Big\}\\ \nonumber
\le
 & ~\epsilon \int_{-t^*}^t||u(\tau)||_2^2+ C_{\epsilon} T\sup_{0,t}||u(\tau)||_{2-\eta}^2
\\\nonumber
&+\epsilon\int_0^t\int_0^{t^*}||u(\tau-s)||_2^2 ds~ d\tau
 +C_{\epsilon}t^*\int_0^t||u(\tau)||_{2-\eta}^2d\tau \\[.1cm]\nonumber
&+C\big\{\e ||q^u(t)||_{-\s}^2 + \frac1{\e}||u(t)||_\s^2+\e||q^u(0)||_{-\s}^2 +\frac1{\e}||u(0)||^2_\s\big\}.
\end{align*}
After rescaling of $\e$ this implies
\begin{align*}
\Big|\int_0^t<q^u(\tau),u_t(\tau)>d\tau \Big|
\le &~C(t^*,\epsilon)(1+ T) \sup_{\tau \in [0,t]} ||u(\tau)||_{2-\eta}^2+\epsilon\int_{-t^*}^t  ||u(\rho)||_2^2 d\rho d\tau\\
&+\e\big\{ ||q^u(t)||_{-\s}^2+||q^u(0)||_{-\s}^2\big\}.
\end{align*}
Therefore by \eqref{assumpt1}
this implies \eqref{hidden2} with $\eta_*=\min\{\eta, 2-\s\}$.

\end{document}